\newtheorem{lem}{Lemma}[section]
\newtheorem{prop}{Proposition}[section]
\newtheorem{cor}{Corollary}[section]
\newtheorem{thm}{Theorem}[section]
\newtheorem{defi}{Definition}[section]
\newtheorem{remark}{Remark}[section]
\newfont{\sBlackboard}{msbm10 scaled 900}
\newcommand{\vv}     {{\rm v}}
\newcommand{\dd}     {{\rm d}}
\newcommand{\mylabel}[1]{\label{#1}
            \ifx\undefined\stillediting
            \else \fbox{$#1$}\fi }
\newcommand{\BE}{\begin{equation}}
\newcommand{\EEQ}{\end{equation}}
\newcommand{\rfb}[1]{\mbox{\rm
   (\ref{#1})}\ifx\undefined\stillediting\else:\fbox{$#1$}\fi}
\newfont{\Blackboard}{msbm10 scaled 1200}
\newfont{\roma}{cmr10 scaled 1200}
\def\CC{\rm \hbox{C\kern-.56em\raise.4ex
         \hbox{$\scriptscriptstyle |$}\kern+0.5 em }}
\newcommand{\rline}  {{\mathbb R}}
\newcommand{\ud}{\mathrm{d}}
\newcommand{\R}{\mathbb{R}}
\newcommand{\C}{\mathbb{C}}
\newcommand{\N}{\mathbb{N}}
\newcommand{\mm}    {{\hbox{\hskip 0.5pt}}}
\newcommand{\bluff} {{\hbox{\raise 15pt \hbox{\mm}}}}
\def\section{\@startsection {section}{1}{\z@}{-3.5ex plus -1ex minus
    -.2ex}{2.3ex plus .2ex}{\large\bf}}
\def\be{\begin{equation}}
\def\ee{\end{equation}}
\def\beqs{\begin{eqnarray*}}
\def\eeqs{\end{eqnarray*}}
\def\pd{\partial}
\def\ds{\displaystyle}
\let \div \relax
\DeclareMathOperator{\div}{div}
\newcommand{\nhd}{neighborhood\xspace}
\let \Re \relax
\DeclareMathOperator{\Re}{Re}
\let \Im \relax
\DeclareMathOperator{\Im}{Im}
\DeclareMathOperator{\supp}{supp}
\newcommand{\eps}{{\varepsilon}}
\newcommand{\Con}{\ensuremath{\mathscr C}}
 \DeclareMathOperator{\ops}{op_{sc}}
  \DeclareMathOperator{\op}{op}
\DeclareMathOperator{\Ai}{Ai}
 \newcommand{\est}[1]{\langle #1 \rangle}
\begin{document}
\thispagestyle{empty}
\title[Stabilization of an acoustic system]{Uniform stabilization of an acoustic system}

\author{Ka\"{\i}s AMMARI}
\address{LR Analysis and Control of PDEs, LR 22ES03, Department of Mathematics, Faculty of Sciences of Monastir, University of Monastir, 5019 Monastir, Tunisia and LMV/UVSQ/Paris-Saclay, France} \email{kais.ammari@fsm.rnu.tn}

 \author{Fathi Hassine}
\address{LR Analysis and Control of PDEs, LR 22ES03, Department of Mathematics, Faculty of Sciences of Monastir, University of Monastir, 5019 Monastir, Tunisia} \email{fathi.hassine@fsm.rnu.tn}

\author{Luc ROBBIANO}
\address{Université Paris-Saclay, UVSQ, CNRS, Laboratoire de Mathématiques de Versailles, 78000, Versailles, France.
}
\email{luc.robbiano@uvsq.fr}
\date{}

\begin{abstract}
We study the problem of stabilization for the acoustic system with a spatially distributed damping. With imposing hypothesis on the structural properties of the damping term, we identify exponential decay of solutions with growing time.  
\end{abstract}

\subjclass[2010]{35L04, 37L15}
\keywords{exponential stability,  resolvent estimate, dissipative hyperbolic system, acoustic equation}

\maketitle

\tableofcontents
%
%
\section{Introduction} \label{intro}
Sound waves propagate due to the compressibility of a medium. In fact, it is a balance between the 
compressibility and the inertia of fluid that governs the propagation of sound waves through it. 
The assumptions made for the acoustic wave equation are gravitational forces is neglected, dissipative 
effects are neglected, the medium (fluid) is homogeneous, isotropic and perfectly elastic. In addition to 
these assumptions we also assume that particle velocity is small, and there are only very small perturbations
(fluctuations) to the equilibrium pressure and density.

We consider the following acoustic damping system of equations:

\be
\label{fluide}
\left\{
\begin{array}{l}
u_t + \nabla r + b \, u = 0, \,\hbox{ in } \Omega \times \rline^+, \\
r_t + \div u = 0, \,\hbox{ in } \Omega \times \rline^+, \\
u \cdot n = 0, \,\hbox{ on } \Gamma \times \rline^+, \\
u(0,x) = u^0(x), \, r(0,x) = r^0(x), \, x \in \Omega,
\end{array}
\right.
\ee
where $\Omega$ is a bounded domain in $\rline^d, \, d\geq2$,
with a smooth boundary $\Gamma$, $\div=\nabla\cdot$ is the divergence operator and $b\in L^{\infty}(\Omega)$,
with $b \ge 0$ on $\Omega$. 
As usual $n$ denotes the unit outward normal vector along $\Gamma$.

\medskip

The system of equations (\ref{fluide}) is a linearization of the \emph{acoustic equation} governing the propagation of 
acoustic waves in a compressible medium, see Lighthill \cite{Lighthill:78,Lighthill:52,Lighthill:54}, where $b \, u$ represents a damping term of Brinkman type. 
This kind of damping arises also in the process of homogenization (see Allaire \cite{Allaire:91}), and is frequently used as a suitable \emph{penalization} 
in fluid mechanics models, see Angot, Bruneau, and Fabrie \cite{AngotBruneauFabrie:99}. Our main goal is to prove the exponential  decay of solutions of (\ref{fluide}) with growing time.  
 
\medskip

Let $L^2(\Omega)$ denote the standard Hilbert space of square integrable functions in $\Omega$
and its closed subspace $L^2_m(\Omega)=\{f\in L^2(\Omega): \int_\Omega f(x)\,dx=0\}$.
To avoid abuse of notation,
we shall write $\|\cdot\|$ for the $L^2(\Omega)$-norm or the $L^2(\Omega)^d$-norm.

\medskip

Denoting  $H = (L^2(\Omega))^d \times L^2_m(\Omega)$, we introduce the operator
$$
{\mathcal A} = \left(
\begin{array}{ll}
0  & \nabla \\
\div  & 0
\end{array}
\right) : {\mathcal D}({\mathcal A}) = \left\{(u,r) \in H, \, (\nabla r,\div u) \in H, \, u \cdot n_{|\Gamma} = 0 \right\} \subset  H \rightarrow H,
$$
and
$$
{\mathcal B} =  \left( \begin{array}{ll} \sqrt{b} \\ 0 \end{array} \right) \in {\mathcal L}((L^2(\Omega))^d,H), \,
{\mathcal B}^* = \left( \begin{array}{cc} \sqrt{b} & 0 \end{array} \right) \in {\mathcal L}(H, (L^2(\Omega))^d).
$$
We recall that for $u\in  (L^2(\Omega))^d$ with $\div u\in L^2(\Omega)$, $u \cdot n_{|\Gamma} $ make sens in $H^{-1/2}(\Gamma)$
(see Girault-Raviart~\cite[Chap 1, Theorem 2.5]{Girault-Raviart}).
\medskip

Accordingly, the problem (\ref{fluide}) can be recasted in an abstract form:
\be
\label{cauchy}
\left\{
\begin{array}{l}
Z_t (t) + {\mathcal A} Z(t) + {\mathcal B} {\mathcal B}^* Z(t) = 0, \, t > 0, \\
Z(0) = Z^0,
\end{array}
\right.
\ee
where $Z = (u, r)$,
or, equivalently, 
\be
\label{cauchybis}
\left\{
\begin{array}{l}
Z_t (t) = {\mathcal A}_d Z(t), \, t > 0, \\
Z(0) = Z^0,
\end{array}
\right.
\ee
with ${\mathcal A}_d = - {\mathcal A} - {\mathcal B}{\mathcal B}^*$ with ${\mathcal D}({\mathcal A}_d) = {\mathcal D}({\mathcal A}).$

\medskip

It can be shown (see \cite{AFN}) that for any initial data $(u^0, r^0) \in {\mathcal D}({\mathcal A})$ the 
problem \eqref{fluide} admits a unique solution
$$(u,r) \in C([0,\infty); {\mathcal D}({\mathcal A})) \cap C^1([0, \infty); H).$$ Moreover, the solution $(u,r)$ satisfies,
the energy identity
\be
\label{energyid}
E(0) - E(t) =
 \int_0^t
\left\|\sqrt{b} \, u(s)\right\|_{(L^2(\Omega))^d}^2 \dd s, \ \mbox{for all}\ t \geq 0
\ee
with
\be
\label{energy}
E(t) = \frac{1}{2} \,\left\|(u(t),r(t)) \right\|^2_{H}, \, \forall \, t \geq 0, 
\ee
where we have denoted
$$
\left\langle (u,r), (v,p)\right\rangle_H = \int_\Omega \left(u(x) . v(x) + r(x) p(x) \right) \, \ud x , \,  \left\| (u,r)\right\|_H = \sqrt{\int_\Omega \left(\left|u(x)\right|^2 + r^2(x) \right) \, \ud x}.
$$

\medskip

Using (\ref{energyid}) and a standard density argument, we can extend the solution operator for  data 
$(u^0, r^0) \in H$. Consequently, we associate with the problem (\ref{fluide}) (or to the abstract Cauchy problems (\ref{cauchy}) or
(\ref{cauchybis})) a  semigroup that is globally bounded in $H$. 

\medskip

As the energy $E$ is nonincreasing along trajectories, we want to determine the set of initial data $(u^0, r^0)$ for which 
\begin{equation}\label{stab}
E(t) \to 0 \ \mbox{as} \ t \to \infty.
\end{equation}
Such a question is of course intimately related to the structural properties of the function $b$, notably to 
the geometry of the set $\omega$ on which the damping is effective. In fact, when the damping term is 
globally distributed Ammari, Feireisl and Nicaise \cite{AFN} showed an exponential decay rate of the energy 
by the means of an observability inequality associated with the conservative problem of \eqref{fluide}. 
Besides, it is also shown that if the damping coefficient is not uniformly positive definite 
(i.e $\ds\inf_{x\in\Omega}b(x)=0$) then the system \eqref{fluide} may be no exponentially stable. 
In this paper we consider a damping which is locally distributed over the domain $\Omega$ with a 
geometrical control conditions. In this article we prove uniform  decay as given 
in \cite{AFN} but under more restrictive geometrical conditions on the damping coefficient.
More precisely, we prove exponential  decay rates of the energy. 

\medskip

The paper is organized as follows. Section \ref{section2} summarizes some well known facts concerning the 
acoustic system \rfb{fluide}. In Section \ref{Sec: Tools}, we introduce some tools and notation as Riemannian 
geometry, pseudo-differential operators, and Gearhart-Huang-Pr\"uss theorem allowing to prove exponential decay
from estimates on the resolvent.
In Section~\ref{Sec: Introduction of a semiclassical measure} we introduce a semi-classical measure 
and we prove some properties, first the measure 
is supported on characteristic set and is not identically null.
In Section~\ref{Sec: Propagation of the support} we prove  local  properties in a \nhd of boundary. 
From these results we prove in 
Section~\ref{sec: Precise description of Geometric Control Condition} global propagation of measure support. 
In Section~\ref{Sec: Estimate of boundary trace, case strictly diffractive} we prove an estimate in a \nhd of a diffractive 
point. Section~\ref{Sec: Propagation of the support}, \ref{sec: Precise description of Geometric Control Condition} and 
\ref{Sec: Estimate of boundary trace, case strictly diffractive} imply that the measure is null if $\{x,b(x)>0\}$ satisfies 
a geometric condition (GCC)(see Definition~\ref{def: GCC flat} and Section~\ref{sec: Geometry},
 Section~\ref{Sec: Geometric Control Condition (GCC)} for more precise definition).

\section{Preliminaries and main results}
\label{section2}

We start with a simple observation that the problem (\ref{fluide}) can be viewed as a bounded (in $H$) perturbation of the conservative system 
\be
\label{fluidec}
\left\{
\begin{array}{l}
u_t + \nabla r  = 0, \,\hbox{ in } \Omega \times \rline^+, \\
r_t + \div u = 0, \,\hbox{ in } \Omega \times \rline^+, 
\end{array}
\right.
\ee
which can be recast as the standard \emph{wave equation}
\[
r_{tt} - \Delta r = 0.
\]
Consequently, the basic existence theory for (\ref{fluide}) derives from that of (\ref{fluidec}).
Hence ${\mathcal A}_d$ generates a $C_0$-semigroup $(S(t))_{t\geq 0}$ in $H$ that is even of contraction because ${\mathcal A}_d$ is dissipative (see \rfb{energyid}).

The first main difficulty is that
the operator ${\mathcal A}_d$ possesses a non-trivial (and large) kernel that is left invariant by the evolution.
Indeed if  $(u,r)$ belongs to $\ker {\mathcal A}_d$, then it is solution of the ``stationary'' problem 
\begin{equation}\label{st1}
\nabla r + b u = 0, \ \div u = 0, \,\hbox{ in } \Omega.
\end{equation} 
Thus  multiplying the first identity of  (\ref{st1}) by $\overline{u}$ and integrating over $\Omega$ yields 
\[
\int_\Omega (\nabla r \cdot \overline{u}+ b |u|^2)\,\ud x=0.
\]
By an integration by parts, using the fact that $u$ is solenoidal and the 
boundary condition $u \cdot n= 0$ on  $\Gamma$, we get
\[
\int_\Omega \nabla r \cdot \overline{u}\,\ud x=\int_\Omega r\div  \overline{u}\,\ud x= 0,\]
and therefore we obtain 
\[
\int_\Omega  b |u|^2\,\ud x=0.
\]
In other words, we have
\[
 u = 0 \ \mbox{on}\ \supp b ,
\]
and coming back to (\ref{st1}), we find
\[
\nabla r = 0.
\]
Accordingly, we have shown that
\[
\ker {\mathcal A}_d = \{(u, 0)\in {\mathcal D}({\mathcal A}) \ | \ \div u = 0, \ u|_{ \supp b} = 0, 
\ u \cdot n|_{\Gamma} = 0 \}.
\]
For shortness set $E=\ker {\mathcal A}_d$ and
introduce also   its orthogonal complement $H_0$ in $H$. 

It is easy to check that 
\[
\left< {\mathcal A}_d ( w, s ), ( u, r) \right>_H = 0 \ \mbox{for any}\ 
(w, s) \in {\mathcal D}({\mathcal A}), \ (u,r) \in E;
\] 
in particular, the semigroup  associated with (\ref{fluide}) leaves both $E$ and $H_0$ invariant. 
Consequently, the decay property (\ref{stab}) may only hold  for   initial data emenating from the set $H_0$.

\medskip

The following observation can be shown by a simple density argument:

\begin{lem}
The solution $(u,r)$ of \rfb{fluide} with
initial datum in ${\mathcal D}(\mathcal{A}_d)$ satisfies
\be
\label{deriveeenergy}
E'(t) = - \int_\Omega b\left|u \right|^2dx\leq0.
\ee
Therefore the energy is non-increasing and \rfb{energyid} holds for all initial datum in $H$.
\end{lem}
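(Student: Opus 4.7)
The plan is to first differentiate the energy along a smooth trajectory and identify the dissipation, then extend by density.

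Step 1. For initial data $Z^0 = (u^0, r^0) \in \mathcal{D}(\mathcal{A}_d)$, the semigroup theory recalled just before the lemma gives $Z = (u,r) \in C^1([0,\infty);H) \cap C([0,\infty);\mathcal{D}(\mathcal{A}_d))$ solving $Z_t = \mathcal{A}_d Z = -\mathcal{A} Z - \mathcal{B}\mathcal{B}^* Z$. Hence
\[
\frac{\dd}{\dd t} E(t) = \frac12 \frac{\dd}{\dd t}\|Z(t)\|_H^2 = \Re \langle Z_t(t), Z(t)\rangle_H = -\Re\langle \mathcal{A} Z, Z\rangle_H - \Re\langle \mathcal{B}\mathcal{B}^* Z, Z\rangle_H.
\]

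Step 2. I compute the two pieces. For the potential term, $\langle \mathcal{B}\mathcal{B}^* Z, Z\rangle_H = \|\mathcal{B}^* Z\|^2 = \int_\Omega b\,|u|^2\,\dd x$, which is real and nonnegative. For the skew-part, since $(u,r) \in \mathcal{D}(\mathcal{A})$ I have $\nabla r \in L^2(\Omega)^d$, $\div u \in L^2(\Omega)$ and $u\cdot n|_\Gamma = 0$ in $H^{-1/2}(\Gamma)$ (this is precisely the trace result of Girault–Raviart cited in the paper). So the Green formula applies:
\[
\langle \mathcal{A} Z, Z\rangle_H = \int_\Omega \nabla r \cdot \overline{u}\,\dd x + \int_\Omega \div u \, \overline{r}\,\dd x = \langle u\cdot n, r\rangle_{H^{-1/2}(\Gamma),H^{1/2}(\Gamma)} \cdot 2\Re - \ldots
\]
More cleanly: integration by parts gives $\int_\Omega \nabla r \cdot \overline{u} = -\int_\Omega r\,\overline{\div u} + \langle r, \overline{u}\cdot n\rangle_\Gamma$ and the boundary term vanishes, so $\Re\langle \mathcal{A}Z,Z\rangle_H = 0$ (i.e. $\mathcal{A}$ is skew-symmetric on $\mathcal{D}(\mathcal{A})$). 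Combining,
\[
E'(t) = -\int_\Omega b\,|u(t,x)|^2\,\dd x \le 0,
\]
which is \eqref{deriveeenergy}. Integrating in time yields \eqref{energyid} for smooth data.

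Step 3. For an arbitrary $Z^0 \in H$, pick a sequence $Z^0_n \in \mathcal{D}(\mathcal{A}_d)$ with $Z^0_n \to Z^0$ in $H$. Since $(S(t))_{t\ge 0}$ is a contraction semigroup, $S(t)Z^0_n \to S(t)Z^0$ uniformly on compact time intervals. Writing \eqref{energyid} for $Z^0_n$ and passing to the limit, the left-hand side converges because the norm is continuous, and the right-hand side converges because $\sqrt{b}\,u_n \to \sqrt{b}\,u$ in $C([0,T];L^2(\Omega)^d)$ (as $b \in L^\infty$ and the $u_n$ converge in $C([0,T];L^2(\Omega)^d)$). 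Hence \eqref{energyid} persists for all $Z^0 \in H$, and monotonicity of $E$ follows.

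The only subtlety is the rigorous use of the generalized normal trace and the Green formula on $\mathcal{D}(\mathcal{A})$, which is where the $H^{-1/2}(\Gamma)$ duality and the requirement $u\cdot n|_\Gamma = 0$ are essential; everything else is a one-line differentiation plus a density argument.
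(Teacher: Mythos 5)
Your proof is correct and is precisely the argument the paper intends: the paper gives no written proof beyond invoking ``a simple density argument,'' and the same integration by parts using the solenoidal/normal-trace structure and $u\cdot n|_\Gamma=0$ already appears in its computation of $\ker\mathcal{A}_d$, so your Steps 1--2 (skew-symmetry of $\mathcal{A}$ on $\mathcal{D}(\mathcal{A})$ plus $\langle\mathcal{B}\mathcal{B}^*Z,Z\rangle_H=\|\sqrt{b}\,u\|^2$) and Step 3 (passage to the limit in the integrated identity) are exactly what is needed. The only blemish is the garbled intermediate display in Step 2, which you immediately discard in favor of the correct Green formula, so nothing is missing.
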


As already shown in the above, the strong stability result (\ref{stab}) may hold only if 
we take the initial data
\[
(u^0, r^0) \in H_0 = \ker [{\mathcal A}_d]^\perp. 
\]
There are several ways how to show (\ref{stab}), here
we make use of the following result
due to Arendt and Batty \cite{arendt:88}:

\begin{thm}\label{thmArendtBatty}
Let $(T(t))_{t\geq0}$ be a bounded $C_0$-semigroup on a reflexive Banach space $X$. Denote by $A$ the generator of $(T(t))$ and by $\sigma(A)$ the spectrum of $A$. If $\sigma(A)\cap i\mathbb{R}$ is countable and no eigenvalue of $A$ lies on the imaginary axis, then $\ds \lim_{t\rightarrow+\infty} T(t)x = 0$ for
all $x\in X$.
\end{thm}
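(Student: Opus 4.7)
The plan is to prove Theorem~\ref{thmArendtBatty} via the classical route combining an $\omega$-limit set argument with a spectral fact about bounded $C_0$-groups. First I would exploit reflexivity of $X$ together with boundedness of $(T(t))_{t\geq 0}$ to note that for each $x\in X$ the orbit $\{T(t)x : t\geq 0\}$ is weakly relatively compact. Let $\omega(x)$ denote its weak $\omega$-limit set and put $Y = \overline{\mathrm{span}}\,\bigcup_{t\geq 0} T(t)\omega(x)$. A standard de~Leeuw--Glicksberg type argument shows that $(T(t))$ restricted to $Y$ extends to a bounded $C_0$-group $(U(t))_{t\in\R}$ whose generator $B$ is the part of $A$ in $Y$. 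Because $U$ is a group, $\sigma(B)\subset i\R$; by invariance $\sigma(B)\subset\sigma(A)\cap i\R$, which by hypothesis is countable. Any eigenvalue of $B$ would be an eigenvalue of $A$ on the imaginary axis, and so is excluded.

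Next, I would invoke the analytical core: a bounded $C_0$-group on a nonzero Banach space whose generator has countable spectrum must possess an eigenvalue. This is proved by transfinite induction on the Cantor--Bendixson derivatives of $\sigma(B)$. At each stage, an isolated point $i\lambda$ of the current (reduced) spectrum either yields a nontrivial spectral projection, producing an eigenvector, or, if not, the Laurent expansion of the resolvent of $B$ near $i\lambda$ combined with $\sup_{t\in\R}\|U(t)\|<\infty$ forces the associated Riesz projection to vanish, removing $i\lambda$ from the spectrum. Countability of $\sigma(B)$ guarantees that its Cantor--Bendixson derivatives terminate at $\emptyset$, so one is left with the alternative $Y=\{0\}$ or $B$ has an eigenvalue. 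The assumption that $A$ has no eigenvalues on $i\R$ rules out the second case, so $Y=\{0\}$ and hence $\omega(x)=\{0\}$.

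It follows that $T(t)x\rightharpoonup 0$ weakly as $t\to\infty$ for every $x\in X$. Upgrading weak to strong convergence uses boundedness of the semigroup together with the fact that every weak limit point of $\{T(t)x\}$ lies in $\omega(x)=\{0\}$, applied on a dense set of initial data and extended by $3\epsilon$ argument. The main obstacle I expect is the transfinite-induction step: converting ``countable boundary spectrum with no eigenvalues'' into the vanishing of every Riesz projection requires a delicate Laplace-transform and contour-integration argument in the Banach-space setting, which is the analytical heart of the Arendt--Batty theorem and cannot be shortcut by Hilbert-space techniques such as Gearhart--Pr\"uss.
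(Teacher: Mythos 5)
The paper does not prove this theorem; it is quoted from Arendt--Batty \cite{arendt:88}, so your proposal can only be judged on its own terms. It contains a genuine gap at the final step, the passage from weak to strong convergence. Your argument uses the countability of $\sigma(A)\cap i\mathbb{R}$ only once, namely to show that the group induced on $Y=\overline{\mathrm{span}}\bigcup_{t}T(t)\omega(x)$ would have an eigenvalue unless $Y=\{0\}$; everything after that is soft. But consider the translation group $T(t)f=f(\cdot+t)$ on $L^2(\R)$: it is a bounded $C_0$-group on a reflexive space, $A=d/ds$ has no eigenvalues on $i\mathbb{R}$, and every orbit converges weakly to $0$, so $\omega(x)=\{0\}$ and $Y=\{0\}$ for every $x$ without ever invoking countability. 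Your argument would then ``conclude'' strong stability, yet $\|T(t)f\|=\|f\|$ for all $t$. The only hypothesis this example violates is the countability of $\sigma(A)\cap i\mathbb{R}=i\mathbb{R}$, which shows that countability must be used again precisely in the weak-to-strong upgrade; it cannot be recovered by a ``$3\epsilon$ argument on a dense set,'' since weak convergence on a dense set plus boundedness only yields weak convergence everywhere.

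The standard ways to close this gap are exactly the analytical cores you partially allude to but place in the wrong spot. Either one runs the Lyubich--V\~u construction in the \emph{norm} topology: introduce the seminorm $p(x)=\limsup_{t\to\infty}\|T(t)x\|$, pass to the completion of $X/\ker p$, on which the induced semigroup is isometric and (because $\sigma$ of its generator sits inside the countable set $\sigma(A)\cap i\mathbb{R}$, hence is not a half-plane) extends to an isometric group; an isolated point of that countable spectrum then produces, via the Riesz projection and Gelfand's theorem, an eigenvalue which is transported back to an eigenvalue of $A^*$ (equal, in the reflexive case and for bounded semigroups, to an eigenvalue of $A$ by the mean ergodic decomposition). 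Or one follows Arendt--Batty's Tauberian route with the Laplace transform and contour integration, where the Cantor--Bendixson transfinite induction you mention actually lives. Your spectral lemma about bounded groups with countable spectrum is correct (and does not really need transfinite induction: a nonempty closed countable subset of $i\mathbb{R}$ has an isolated point, which is automatically an eigenvalue of the generator of a bounded group), but as deployed it only yields weak stability, which is strictly weaker than the assertion of the theorem.
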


In view of this theorem we need to identify the spectrum of
${\mathcal A}_d$ lying on the imaginary axis, and we have according to \cite{AFN}:

\begin{itemize}
\item
Suppose that $|\omega| > 0$.
If $\lambda$ is a non-zero real number, then $i\lambda$  is not an eigenvalue of ${\mathcal A}_d$.
\item
Suppose that $|\omega| > 0$.
If $\lambda$ is a non-zero real number, then $i\lambda$ belongs to the resolvent set $\rho({\mathcal A}_d)$ of ${\mathcal A}_d$.
\end{itemize}

Now, Theorem \ref{thmArendtBatty} leads to
\begin{cor}[\cite{AFN}]
\label{cconv}
Let $(u, r)$ be the unique semigroup solution of the problem (\ref{fluide}) emanating from the initial data 
$(u^0,r^0) \in H$. Let $P_E$ be the orthogonal projection onto the space $E = \ker [{\mathcal A}_d ]$ in $H$, and let 
\[
(w,s) = P_E (u^0, r^0).
\] 
Then
\[ 
\| (u,r)(t, \cdot) - (w, s) \|_{H} \to 0 \ \mbox{as}\ t \to \infty
\]
\end{cor}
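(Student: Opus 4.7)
The plan is to deduce the statement from the Arendt-Batty theorem (Theorem \ref{thmArendtBatty}) applied on the invariant closed subspace $H_0=E^\perp$. Write the initial datum as
\[
(u^0,r^0)=(w,s)+Z^0,\qquad Z^0:=(u^0,r^0)-(w,s)\in H_0.
\]
Since $(w,s)\in E=\ker{\mathcal A}_d$, one has $S(t)(w,s)=(w,s)$ for all $t\ge0$. By the invariance of both $E$ and $H_0$ under $(S(t))_{t\ge0}$ (recalled above the lemma on energy dissipation), it follows that
\[
(u,r)(t)-(w,s)=S(t)Z^0\in H_0,\qquad t\ge0,
\]
so the problem is reduced to showing that $\|S(t)Z\|_H\to 0$ as $t\to\infty$ for every $Z\in H_0$.

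Next, consider the part ${\mathcal A}_d^0:={\mathcal A}_d|_{H_0}$ of ${\mathcal A}_d$ in $H_0$. Since $H_0$ is a closed subspace of the Hilbert space $H$, it is itself a reflexive Banach space, and ${\mathcal A}_d^0$ generates the restricted contraction $C_0$-semigroup $(S(t)|_{H_0})_{t\ge0}$. The orthogonal decomposition $H=E\oplus H_0$ into ${\mathcal A}_d$-invariant summands gives a matching decomposition of the resolvent, hence $\sigma({\mathcal A}_d^0)\subset\sigma({\mathcal A}_d)\cup\{0\}$. Combined with the two bulleted facts recalled from \cite{AFN} (which state that no nonzero purely imaginary number is either in the spectrum or an eigenvalue of ${\mathcal A}_d$), this yields $\sigma({\mathcal A}_d^0)\cap i\mathbb{R}\subset\{0\}$, which is certainly countable. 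Moreover, since $\ker{\mathcal A}_d=E$, the restriction ${\mathcal A}_d^0$ has trivial kernel, so $0$ is not an eigenvalue of ${\mathcal A}_d^0$; and no nonzero purely imaginary number is an eigenvalue either. Therefore Theorem \ref{thmArendtBatty} applies to ${\mathcal A}_d^0$ on $H_0$ and gives $\|S(t)Z^0\|_H\to 0$ as $t\to\infty$, which is precisely the claim.

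The only delicate point in this argument is the transfer of spectral information from ${\mathcal A}_d$ to its part in $H_0$: one must verify that the resolvent $(\lambda-{\mathcal A}_d)^{-1}$ leaves both $E$ and $H_0$ invariant. This follows from the semigroup invariance via the Laplace transform representation for $\Re\lambda>0$, and then extends to the whole resolvent set by analytic continuation, which yields the inclusion $\sigma({\mathcal A}_d^0)\subset\sigma({\mathcal A}_d)\cup\{0\}$ used above. Once this routine verification is in place, the corollary is an immediate application of Arendt-Batty.
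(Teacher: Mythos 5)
Your proof is correct and follows essentially the same route the paper intends: decompose the data along $H = E \oplus H_0$, use the invariance of both subspaces under the semigroup, and apply the Arendt--Batty theorem to the part of ${\mathcal A}_d$ in $H_0$ using the two spectral facts quoted from \cite{AFN} (the paper itself only sketches this, deferring the details to \cite{AFN}). The only point you rightly flag as needing care --- that the resolvent commutes with $P_E$ so that the spectral information transfers to the part in $H_0$ --- is indeed routine and is handled adequately.
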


Next we give uniform decay result under 
 the Geometric Control Condition (GCC). A precise definition is given in 
Definition~\ref{def: precise GCC} and this  requires a description of geometry. 
This is done in Section~\ref{sec: Precise description of Geometric Control Condition}. In the following a definition 
for flat Laplace operator is given.
%
%
\begin{defi}
	\label{def: GCC flat}
Let  $\Omega$ be a smooth open domain in $\R^d$.
Let $\omega$ be an open subset of $\Omega$. We assume the geodesic associated with $\Delta$ are not tangent 
to infinity order to the boundary. We say that $\omega $ satisfies (GCC) if  every generalized bicharacteristics  
starting from $\Omega$ meet $\omega$.
\end{defi}
%
%
\begin{remark}
We give in Annex~\ref{sec: Geometry} a precise description of the generalized bicharacteristics. 
Here we give an idea of this notion in case of flat Laplace operator. 
In $\Omega$ a generalized bicharateristic is an integral curve of $H_p$, it is a straight line for $p(\xi)=|\xi|^2$.  When
such a curve hit the boundary, if the curve and the boundary are transverse, the generalized bicharacteristic 
is extended  following the law of reflection in geometrical optics.  If the curve is tangent to the boundary, 
essentially either the  curve stays in $\Omega$ except at intersection between curve and boundary and the 
generalized bicharacteristic is the curve or the curve go out $\Omega$ and the generalized bicharacteristic
glide on boundary. 
\end{remark}

We now state the main results of this article. 
%
%
\begin{thm} \label{princr}
Let $(u, r)$ be the unique semigroup solution of the problem (\ref{fluide}) emanating from the initial data 
$(u^0,r^0) \in H$. Let $P_E$ be the orthogonal projection onto the space 
$E = \ker [{\mathcal A}_d ]$ in $H$, and let 
$(w,s) = P_E (u^0, r^0)$. We assume that the interior of the support of $b$, $\omega = \left\{x \in \Omega; b(x) > 0 \right\}$, satisfies the
geometric control condition (GCC).
Then,
\be
\label{exp}
\| (u,r)(t, \cdot) - (w, s) \|_{H}\le   C \, e^{-c t} \,  \|(u^0, r^0) \|_{H}, \, \forall \, t \geq 0,
\ee
for some $C, c >0$ independent of $ (u^0, r^0)$.
\end{thm}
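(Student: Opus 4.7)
The plan is to apply the Gearhart--Huang--Prüss theorem to the restriction of $\mathcal{A}_d$ to the invariant subspace $H_0 = \ker(\mathcal{A}_d)^\perp$, since the semigroup $(S(t))$ is a contraction on $H$ and thus on $H_0$. Writing $(u(t),r(t)) - (w,s) = S(t)[(u^0,r^0) - (w,s)]$, the estimate \eqref{exp} is equivalent to the exponential decay of $S(t)$ on $H_0$. By Gearhart--Huang--Prüss, this reduces to two facts: first, that $i\R \subset \rho(\mathcal{A}_d|_{H_0})$, which follows from the two bullet points recalled from \cite{AFN} together with the fact that $0$ is removed by restriction to $H_0$; and second, the uniform resolvent bound
\be\label{res-bound}
\sup_{\lambda \in \R}\;\bigl\|(i\lambda - \mathcal{A}_d)^{-1}\bigr\|_{\mathcal{L}(H_0)} < +\infty.
\ee

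I would prove \eqref{res-bound} by contradiction in the standard semiclassical style. Suppose \eqref{res-bound} fails; then there exist sequences $\lambda_n \in \R$ and $Z_n = (u_n,r_n) \in \mathcal{D}(\mathcal{A}_d) \cap H_0$ with $\|Z_n\|_H = 1$ and $F_n := (i\lambda_n - \mathcal{A}_d)Z_n \to 0$ in $H$. Taking the $H$-inner product of the defining equation with $Z_n$ and using \eqref{deriveeenergy} yields $\|\sqrt{b}\,u_n\|_{L^2} \to 0$, so any weak limit carries no mass over $\omega$. Because $\mathcal{A}_d|_{H_0}$ has compact resolvent modulo the wave structure (and the low-frequency regime $|\lambda_n|$ bounded is excluded by the spectral facts above together with a Rellich argument on $H_0$), one may assume $|\lambda_n| \to \infty$ and set $h_n = 1/\lambda_n$. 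This puts us in the semiclassical regime with symbol $p(x,\xi) = |\xi|^2_g - 1$ governing the associated wave system.

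The heart of the argument is then the construction and study of a semiclassical defect measure $\mu$ on $T^*\bar\Omega$ associated with the bounded sequence $(Z_n)$, exactly as announced in Sections~\ref{Sec: Introduction of a semiclassical measure}--\ref{Sec: Estimate of boundary trace, case strictly diffractive}. I would carry out the following steps in order: (i) verify that $\mu$ is well defined and that $\int d\mu = \lim \|Z_n\|_H^2 = 1$, so $\mu \not\equiv 0$ provided no mass escapes to infinity in $\xi$, which follows from the ellipticity of $i\lambda_n - \mathcal{A}_d$ off the characteristic set; (ii) show $\supp \mu \subset \{p = 0\}$ using the equation satisfied by $Z_n$; (iii) show $\mu \equiv 0$ on the set $\{b > 0\}$ from the dissipation bound; (iv) propagate $\mu$ along the Hamilton flow of $p$ in the interior, and handle the boundary via local normal-form/microlocal analysis near hyperbolic, glancing, and diffractive points; (v) conclude that $\mu$ is invariant under the generalized bicharacteristic flow in the sense of Melrose--Sjöstrand. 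Combined with (iii) and the hypothesis (GCC), every generalized bicharacteristic enters $\omega$, forcing $\mu \equiv 0$, which contradicts $\int d\mu = 1$.

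The main obstacle is step (iv), specifically the propagation of $\mu$ across the diffractive part of the glancing set for this first-order system (not a scalar wave equation). The acoustic system is not the scalar wave equation: even though $r$ satisfies $\partial_t^2 r - \Delta r = 0$ formally in the undamped case, the boundary condition is $u \cdot n = 0$, which corresponds to the Neumann condition on $r$ but couples the propagation of the momentum $u$ to that of $r$. One must therefore conjugate the system to decouple the principal part and reduce to a scalar problem with Neumann-type condition before invoking the usual normal forms at the boundary, then carry out the delicate Airy-type estimate near strictly diffractive points to exclude mass accumulation on tangent rays that would otherwise avoid $\omega$. This is precisely the content of Section~\ref{Sec: Estimate of boundary trace, case strictly diffractive}; once it is in hand, the geometric hypothesis closes the contradiction and yields \eqref{res-bound}, hence \eqref{exp}.
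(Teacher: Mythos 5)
Your proposal is correct and follows essentially the same route as the paper: Gearhart--Huang--Pr\"uss on $H/\ker\mathcal{A}_d$, reduction to a uniform resolvent bound on $i\R$, a contradiction argument via a semiclassical defect measure supported on the characteristic set, vanishing on $\{b>0\}$, propagated along generalized bicharacteristics with the Airy-type analysis at strictly diffractive points, and GCC forcing the measure to vanish against its non-nullity. The only cosmetic difference is that the paper disposes of the bounded-$|\lambda|$ regime by citing the $Me^{C|\mu|}$ resolvent estimate from the authors' earlier work rather than a Rellich-type compactness argument, and establishes non-nullity of the measure by a frequency-cutoff lower bound rather than computing the total mass.
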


Before proving the previous theorem, we need some tools in Riemannian geometry, pseudo-differential calculus. 
Our goal is to apply characterization of exponential decay by an estimate of the resolvent on imaginary axis. To prove
such an estimate we use semiclassical measure. In next section we gives all the tools used in main part of the proof.

%
%
\section{Tools}
	\label{Sec: Tools}

In this section we give some  tools on Riemannian geometry, pseudo-differential calculus, stabilization and invariant 
form of acoustic system. If theses tools are classical, this introduction allows us to fix the notation used in this article.
%
%

\subsection{Riemannian geometry tools}
 	\label{sec: Riem tools}
 In this section we introduce notation used in this article. 
 Let $M$ be a manifold and let $g$ be a metric on $M$, $g$ is a inner product  acting on the vector fields.
 In local coordinates, $g=(g_{jk})_{1\le j,k\le d}$, where $g$ is symmetric, i.e. $g_{jk}=g_{kj}$ and $g$ depends on 
 $x\in M$. If $v=(v^j)_{1\le j\le d}$ and $w=(w^j)_{1\le j\le d}$ are vector fields, for each $x\in M$
 the inner product is given by $g_{jk}(x) v^j(x)w^k(x) $, where we use here and in what follows, the Einstein notation 
 i.e. $g_{jk} v^jw^k= \ds \sum_{1\le j,k \le d}g_{jk} v^jw^k$. We denote by $(.,.)_g$ the $L^2$ inner product on $L^2(M)$.
 The local form depends on the objects used, in what follows the inner product is considered on functions or on vector 
 fields. In local coordinates, if $f$ and $q$ are functions on $M$, 
 \begin{equation}
 (f,q)_g=\int_Mf(x)\overline{q(x)} |g|^{1/2}dx,
 \end{equation}
 where $ |g|=|\det g|$. In fact $ |g|^{1/2}dx$ is the local form of the Riemannian density associated with the metric $g$.
 It is a geometric object, invariant under change of variables.
 
 If $v$ and $w$ are vector fields, in local coordinates, we have
 \begin{equation}
 	\label{eq: inner product}
 (v,w)_g=\sum_{1\le j,k \le d}\int_Mg_{jk}v^j(x)\overline{w^k(x)} |g|^{1/2}dx.
 \end{equation}
 In what follows, we denote some time $(v^j,v^k)_g$ instead of $ (v,w)_g$, in particular when for instance $v^j$ 
 is given by a formula for each $j$. The notation $(v^j,v^k)_g$ must be interpreted as the right hand 
 side of~\eqref{eq: inner product}.
 
 We denote by $(g^{jk})_{1\le j,k \le d}$ the inverse of  $g$ and we have $g^{jk}g_{k\ell}=\delta_\ell^j$, where 
 $\delta_\ell^j$ is the Kronecker symbol.
 
 For a function $f$, we can define its covariant derivative given,  in local coordinates,  by the following formula
 \begin{equation}
	 \label{for: grad}
( \nabla_gf)^j= g^{jk}\pd_kf,
 \end{equation}
and $\nabla_gf$ is a vector field. 

For a vector field $v$, we can define its divergence given, in local coordinates, by the following formula
\begin{equation}
	\label{for: div}
\div_g v= |g|^{-1/2}\pd_k ( |g|^{1/2}v^k).
\end{equation}
The operators $\nabla_g$ and $\div_g$ are dual. If $f$, a function,  $v$, a vector field, one of  both
 supported in interior of 
$M$, we have the formula
\begin{equation}
	\label{eq: integrate parts without bd}
(f,\div_g v)_g=-(\nabla_g f, v)_g.
\end{equation}
To study the problem in a neighborhood of boundary, we can use
geodesic coordinates. In these coordinates, the boundary is locally defined by $x_d=0$, the domain by $x_d>0$, 
and the metric takes the following special form, $g_{jd}= 0$ for $1\le j\le d-1$, and $g_{dd}=1$.
We denote by $\tilde g_{jk}=g_{jk}$ for $1\le j,k\le d-1$, and $( \tilde g^{jk}) _{1\le j,k\le d-1}$ the inverse of
$( \tilde g_{jk}) _{1\le j,k\le d-1}$. It is the restriction of the metric $g$ on the hypersurfaces $x_d= constant$.
If we write, for instance $ \tilde g^{jk}\xi_k$ means the sum is on  $k$ 
between $1$ and $d-1$.

If $f$, a function, and $v$, a vector field, are supported in a neighborhood of the boundary, the integration by parts 
formula takes the form
\begin{equation}
	\label{eq: integration parts boundary}
(f,\div_g v)_g=-(\nabla_g f, v)_g - (f_{|x_d=0}, v^d_{|x_d=0})_\pd,
\end{equation}
where $(q,r)_\pd=\int_{\R^{d-1}}q(x')\overline{r(x')}|g|_{|x_d=0}^{1/2}dx'$.
Observe that in geodesic coordinates, the exterior unit  normal at the boundary is the vector $\nu=(0,\dots,0,-1)$, and
$-v^d$ is the inner product between $\nu $ and $v$. In fact we can give an invariant form to the integration formula
given above but we do not use this invariant form in this article.
We also use in local coordinates the following integration by parts formula.
\begin{equation}
	\label{eq: integration parts functions}	
(w_1,hD_d w_2)=(hD_d  w_1,w_2)- ih((w_1)_{|x_d=0},(w_2)_{|x_d=0})_\pd.
\end{equation}
 In fact we can give an invariant form to the integration formula
given above but we do not use this invariant form in this article.

%
%

 \subsection{Pseudo-differential tools}
 
 We recall in this section the notation and main definitions on pseudo-differential operators
 and semi-classical calculus. Let $a(x,\xi')\in \Con(\R^d\times\R^{d-1})$, we say $a\in S^m$ if for all $\alpha\in\N^d$ and $\beta \in\N^{d-1}$ there exist $C_{\alpha, \beta}$ such that
 \begin{equation}
 |\pd_{x}^\alpha\pd_{\xi'}^\beta a(x,\xi')|\le C_{\alpha, \beta}\langle \xi'\rangle^{m-|\beta|}, \text{ for all } x\in\R^d, \xi'\in\R^{d-1},
 \end{equation}
 where $ \alpha=(\alpha_1,\dots,\alpha_d)$, $|\alpha|=\alpha_1+\cdots+\alpha_d$, 
 $\pd_{x}^\alpha=\pd_{x_1}^{\alpha_1}\cdots\pd_{x_d}^{\alpha_d}$, and $\langle \xi'\rangle=(1+|\xi'|^2)^{1/2}$.
 
An operator is associated with a symbol by the following formula
 \begin{equation}
 \ops (a)u=\frac{1}{(2\pi)^{d-1}}\int_{\R^{d-1}}e^{ix'\xi'}a(x,h\xi')\hat u(\xi',x_d)d\xi',
 \end{equation}
where  $u$ is  in Schwartz  space,  and 
\[
\hat u(\xi',x_d)=\int_{\R^{d-1}}e^{-ix'\xi'}u(x',x_d)dx'.
\] 
The symbolic calculus is given by the following results.
Let $a\in S^m$ and $b\in S^q$.
\subsubsection{Composition}
There exists $c\in S^{m+q}$ such that  $\ops(a) \ops(b)=\ops(c)$  
 and there exists $c_{m+q-1}\in S^{m+q-1}$ such that 
\(
c=ab+hc_{m+q-1}.
\)
\subsubsection{Commutator}
There exists $d\in S^{m+q-1}$ such that $[\ops(a),\ops(b)]= h \ops(d)$ and there 
exists $d_{m+q-2}\in S^{m+q-2}$ such that $d=-i\{ a ,b\}+hd_{m+q-2}$.

We also  have an exact formula if $a$ is a differential operator, and we shall use the following formula
$[-ih\pd_d,\ops(b)]=-i h \ops(\{\xi_d,b\})$. Observe that $\{\xi_d,b\}=\pd_db$ which is a tangential symbol.
 \subsubsection{Adjoint}
 There exists $ \tilde c\in S^m$ such that $\ops(a)^*=\ops(\tilde c)$ and there exists
 $\tilde c_{m-1}\in S^{m-1}$ such that $\tilde c=\bar a+h c_{m-1}$.
\subsubsection{Operators bounded  on $L^2$}
If $a\in S^0$ then there exists $C>0$ such that $\|\ops(a)u\|_{L^2}\le C\|u\|_{L^2}$ for all $u$ is  in Schwartz  space.
Observe from this formula we can extend by density  $\ops(a) $ to an bounded operator on $L^2$.
We keep the same notation if $\ops(a)$ act on a function depending only on $x'$.
In this case the estimate take the following form \( |\ops(a)u|_{L^2}\le C|u|_{L^2} \). When no confusion is possible, 
we shall write $\|.\|$ instead of $\|.\|_{L^2}=\|.\|_{L^2(\R^d)}$ and $|.|$ instead of $|.|_{L^2}=|.|_{L^2(\R^{d-1})}$.

The constant $C$ in the estimation of $L^2$-norm depends on a finite numbers of semi-norms of the symbol $a$.
We have a precise result for $h$ sufficiently small.

Let $a\in S^0$, there exists $C>0$ depending on a finite  numbers of semi-norms of the symbol $a$ such that
\begin{equation}
	\label{est: norm L2}
	\| \ops(a) u\|\le 2\| a\|_\infty \| u\|+C \| u\|h, \text{ for all } h\in (0,1),
\end{equation}
where $\| a\|_\infty=\sup_{x\in \R^d, \xi'\in\R^{d-1}}|a(x,\xi')|$. Observe that $2$ in this estimate is arbitrary 
and may be replaced by 
$1+\delta$ for all $\delta>0$ but in this case $C$ also depends on $\delta$.
 
 For $s\in \R$, let $\Lambda^s=\ops(\langle \xi'\rangle^{s})$. We deduce from $L^2$ bound an estimation for operators 
 with symbol $a\in S^m$, we have 
 \[
 \|\ops(a)u\| \lesssim \|\Lambda^m u\|.
 \]
 We can define $H^s_{sc}$-norm by $\| u\|_{H^s_{sc}}=\|\Lambda^s u  \|$.
 \subsubsection{G\aa rding inequality}

 We state the G\aa rding inequality adapted to our purpose.
We assume there exist $K\subset \R^d$ a compact, $W\subset\R^{d-1}$ satisfying
 for a fix $R_0>0$, if $|\xi|\ge R_0$ then for all $\lambda\ge1$, $\lambda \xi\in W$.  Let $m\in\R$ and  $a\in S^m$, 
we assume there exists $C_0>0$ such that  $a$ satisfies $a(x,\xi')\ge C_0 |\xi'|^m$ for all $x\in K$,  
$\xi'\in W\cap\{\xi',\ |\xi'|\ge R_0 \}$. Let $\chi\in S^0$ supported in $K\times W$, there exist $C_1>0$ and 
$C_2>0$ such that 
 \begin{equation}
 \label{Garding ineq}
\Re (\ops(a)\ops(\chi)u,\ops(\chi)u)\ge C\| \Lambda^{m/2}\ops(\chi) u \|^2 -C_2h\| \Lambda^{(m-1)/2}u\|^2,
 \end{equation}
 for all $u $ in Schwartz space.

%
%
\subsection{Uniform stabilization}
 
The proof of the first part of Theorem~\ref{princr}, is based on Gearhart-Huang-Prüss  result we recall below.
\begin{thm}
	\label{GHP}
Let $A$ be a generator of a semigroup $e^{tA}$ on $H$. We assume the following conditions
\begin{enumerate}
\item $\exists M>0, \ \| e^{tA}\|_{{\mathcal L}(H)}\le M$ for all $t\ge 0$,
\item $A-i\mu I$ is invertible on $H$  for all $\mu\in\R$,
\item  $\exists M>0, \ \| (A-i\mu I)^{-1}\|_{{\mathcal L}(H)}\le M$.
\end{enumerate}
Then there exist $\lambda>0$ and $C>0$, such that $ \ \| e^{tA}\|_{{\mathcal L}(H)}\le Ce^{-\lambda t}$ for all $t\ge 0$.
\end{thm}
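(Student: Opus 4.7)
The plan is to deduce exponential decay of $e^{tA}$ by first establishing the $L^{2}$-integrability of all orbits, $\int_0^\infty \|e^{tA}x\|^2\,dt \le C\|x\|^2$, and then upgrading this to a uniform exponential rate. The $L^{2}$-bound will come from combining the uniform resolvent estimate on the imaginary axis (hypothesis (3)) with Plancherel's theorem for Hilbert-valued $L^{2}$ functions; the Hilbert space structure of $H$ is essential at this step.

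First I extend the resolvent to a strip. By hypothesis (3), $\|(i\mu I - A)^{-1}\| \le M$ for every $\mu \in \mathbb{R}$. A Neumann series around each $i\mu_0$ shows that $\lambda I - A$ is invertible with $\|(\lambda I - A)^{-1}\| \le 2M$ whenever $|\lambda - i\mu_0| < 1/(2M)$, so the resolvent is analytic and uniformly bounded by $2M$ on the strip $\{|\Re\lambda| < \delta\}$ with $\delta := 1/(2M)$. Next I introduce the convolution $(Sf)(t) := \int_0^t e^{(t-s)A}f(s)\,ds$. For compactly supported $f \in L^2(\mathbb{R}_+;H)$ and $0 < \alpha < \delta$, a Fubini computation gives
\[
\int_0^\infty e^{-(\alpha+i\mu)t}(Sf)(t)\,dt = ((\alpha+i\mu)I - A)^{-1}\int_0^\infty e^{-(\alpha+i\mu)t}f(t)\,dt.
\]
Applying Plancherel to the Hilbert-valued $L^2$ functions $e^{-\alpha\cdot}Sf$ and $e^{-\alpha\cdot}f$ and using $\|((\alpha+i\mu)I-A)^{-1}\| \le 2M$ yields
\[
\int_0^\infty e^{-2\alpha t}\|(Sf)(t)\|^2\,dt \le 4M^2 \int_0^\infty e^{-2\alpha t}\|f(t)\|^2\,dt,
\]
and monotone convergence as $\alpha \to 0^+$ gives $\|Sf\|_{L^2(\mathbb{R}_+;H)} \le 2M\|f\|_{L^2(\mathbb{R}_+;H)}$.

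I now recover the orbit. Apply this bound to $f(t) = \chi_{[0,1]}(t)e^{tA}x$, which satisfies $\|f\|_{L^2}^2 \le M_1^2\|x\|^2$, where $M_1$ is the constant of hypothesis (1). The semigroup law $e^{(t-s)A}e^{sA} = e^{tA}$ gives $(Sf)(t) = \min(t,1)\,e^{tA}x$, so $(Sf)(t) = e^{tA}x$ for $t \ge 1$ and
\[
\int_1^\infty \|e^{tA}x\|^2\,dt \le \|Sf\|_{L^2}^2 \le 4M^2 M_1^2 \|x\|^2.
\]
Combined with $\int_0^1 \|e^{tA}x\|^2\,dt \le M_1^2\|x\|^2$, this yields $\int_0^\infty \|e^{tA}x\|^2\,dt \le C\|x\|^2$ with $C := M_1^2(1+4M^2)$. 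Finally I upgrade to exponential decay: for each $t>0$,
\[
t\,\|e^{tA}x\|^2 = \int_0^t \|e^{(t-s)A}e^{sA}x\|^2\,ds \le M_1^2 \int_0^t \|e^{sA}x\|^2\,ds \le M_1^2\,C\|x\|^2,
\]
hence $\|e^{tA}\| \le M_1\sqrt{C/t}\to 0$. Choosing $t_0$ with $\|e^{t_0 A}\| \le 1/2$ and writing $t = nt_0 + r$ with $0 \le r < t_0$, the semigroup property delivers $\|e^{tA}\| \le M_1\cdot 2^{-n} \le 2M_1\,2^{-t/t_0}$, i.e.\ exponential decay with $\lambda = (\ln 2)/t_0$ and $C = 2M_1$.

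The main obstacle is the Plancherel step: for the estimate $\|((\alpha+i\mu)I - A)^{-1}\| \le 2M$ to be usable under the integral in $\mu$, one needs uniform control of the resolvent on an open strip around the imaginary axis, not merely on the axis itself. This is precisely why the Neumann extension of the first step is indispensable. A subtler point is that Plancherel must be applied to Hilbert-valued $L^2$ functions, where the scalar theory extends verbatim; this is the unique place where the Hilbert (rather than merely Banach) structure of $H$ is used, and indeed the conclusion of the theorem is known to fail on general Banach spaces without additional growth hypotheses on the resolvent.
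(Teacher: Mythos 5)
Your argument is correct. Note that the paper itself gives no proof of Theorem~\ref{GHP}: it states the result as classical and refers to Gearhart, Pr\"uss, Huang and Engel--Nagel, so there is no ``paper proof'' to compare against line by line. What you have written is a complete, self-contained version of the standard Hilbert-space argument: extend the uniform resolvent bound from $i\R$ to a strip by a Neumann series, transfer it via vector-valued Plancherel to an $L^2(\R_+;H)$ bound for the convolution $Sf(t)=\int_0^t e^{(t-s)A}f(s)\,ds$, specialize $f=\chi_{[0,1]}e^{\cdot A}x$ to obtain the Datko condition $\int_0^\infty\|e^{tA}x\|^2\,dt\lesssim\|x\|^2$, and conclude by the usual Datko--Pazy upgrade to exponential decay. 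All the delicate points are handled: the Laplace-transform representation of the resolvent is only invoked for $\Re\lambda>0$ where it is valid for a bounded semigroup, the uniform bound $2M$ on the open strip is what makes the Plancherel step legitimate, the monotone-convergence passage $\alpha\to0^+$ is sound, and you correctly flag that Plancherel for $H$-valued functions is the one place where the Hilbert structure enters (the theorem indeed fails on general Banach spaces). This is essentially the proof one finds in the cited references, so there is nothing to correct.
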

This result is classical and we refer to \cite{Gearhart-1978}, \cite{Pruss-1984}, 
\cite[Theorem 3]{Huang85}, \cite{EN-2000} for a proof. 
 In our case, the first condition is satisfied as energy is non increasing. 
 The first item is a consequence of the energy decaying.
 The second is not true for $\mu=0$ as $ {\mathcal A}_d$ as a non trivial kernel. To avoid this difficulty we only have to 
 consider $ {\mathcal A}_d$ as an operator on $\tilde H=H/\ker {\mathcal A}_d$. On this space  $A-i\mu I$ is invertible for all
 $\mu \in\R$, see \cite{AHR} for a proof. In \cite{AHR} we prove the following estimate $\| ({\mathcal A}_d-i\mu I)^{-1}\|_{{\mathcal L}(\tilde H)}\le Me^{C|\mu|}$ for some constants $M>0$ and $C>0$. In particular, for all $C>0$, there exists
 $M>0$ such that  $  \| (A-i\mu I)^{-1}\|_{{\mathcal L}(H)}\le M$ for every $\mu\in [-C,C]$. This means, only large $|\mu|$
 have to be considered to prove the third item of Theorem~\ref{GHP}.

%
%

 \subsection{Invariant form of acoustic system}
The generator of semigroup of acoustic system has been defined above (see~\eqref{cauchy} and \eqref{cauchybis}).
It is useful to introduce the following system
\be
\label{fluidec-1}
\left\{
\begin{array}{l}
-\nabla_g r-i\mu u -bu= f, \,\hbox{ in } \Omega , \\
- \div_g u-i\mu r= q, \,\hbox{ in } \Omega , 
\end{array}
\right.
\ee
where $(f,q)\in H= (L^2(\Omega))^d \times L^2_m(\Omega)$, and $(u,r)\in  {\mathcal D}({\mathcal A})$.
From geometric point of view, $u$ is a vector field or equivalently a 1-contravariant tensor
and  $g$ is the euclidian metric. The form~\eqref{fluidec-1} allows us to change variables and 
to follow the different geometric quantities, $g$ and $u$.
With the previous notation it is equivalent to estimate $({\mathcal A}_d-i\mu)^{-1}$ and estimate $(u,r)$ by $(f,q)$.
Observe the space $H$ is decomposed in direct sum $\ker {\mathcal A}_d\oplus (H/\ker {\mathcal A}_d )$, and it is
easy to see that an estimate of  $({\mathcal A}_d-i\mu)^{-1}$ on $H$   for $\mu\ne0$ implies an estimate 
on $({\mathcal A}_d-i\mu)^{-1}$ restricted to $ (H/\ker {\mathcal A}_d) $. In what follows, we do not consider the space
 $ (H/\ker {\mathcal A}_d )$ as it must be introduced only to avoid the problem coming from the fact that ${\mathcal A}_d$
 has a non trivial null space. In geodesic coordinates the system keep the same form, with the metric $g$ satisfying 
 properties view in Section~\ref{sec: Riem tools}.

To use semi-classical tools, we set $h=1/|\mu|$ and multiplying System~\eqref{fluidec-1} by $ih$ we obtain
\be
\label{fluidec-2}
\left\{
\begin{array}{l}
-ih\nabla_g r+\varpi u -ihbu=ih f, \,\hbox{ in } \Omega, \\
-ih \div_g u+\varpi r= ihq, \,\hbox{ in } \Omega, 
\end{array}
\right.
\ee
where $\varpi=\mu/|\mu|=\pm 1$. In the following we only consider case $\varpi=+1$, in fact changing $u$ in $-u$ and 
$q$ in $-q$, in System~\eqref{fluidec-2} with $\varpi=+1$, we have the system with $\varpi=-1$ and $b$ 
is changed in $-b$.
Observe that the sign of $b$ is important to have the decay of energy but plays no role in the proof of the estimate for
resolvent. Nevertheless $b$ should be non negative or non positive, here it is  the case.
 
 %
 %
 
 \section{Introduction of a semiclassical measure}
 	\label{Sec: Introduction of a semiclassical measure}

The goal is to prove the following estimate, 
there exists $C>0$ such that
  \begin{equation}
  	\label{est: unif}
 \|  u\|_{(L^2(\Omega))^d} + \| r \|_{L^2(\Omega)} \le C(   \| f \|_{(L^2(\Omega))^d} + \|  q\|_{L^2(\Omega)} ),
 \end{equation}
for all $(f,q)\in (L^2(\Omega))^d\times L^2(\Omega)$ and $(u,r)\in (L^2(\Omega))^d\times L^2(\Omega)$ solution of 
System~\eqref{fluidec-1} or equicalently System~\eqref{fluidec-2}. 

We prove Estimate~\eqref{est: unif} by contradiction. If Estimate~\eqref{est: unif}  is false, there exist a 
sequence $(h_j)_{j\in \N} $ and $(f_{h_j},q_{h_j})$ and $ (u_{h_j},r_{h_j})$ satisfying Estimate~\eqref{est: unif}
and Equation~\eqref{fluidec-2} with $h_j$ instead of $h$ such that 
\begin{align}
	\label{est: unif 2}
&h_j\to 0 \text{ as } j\to +\infty,\\
& \|  u_{h_j}\|_{(L^2(\Omega))^d} + \| r _{h_j}\|_{L^2(\Omega)} =1,\notag\\
& \| f_{h_j} \|_{(L^2(\Omega))^d} + \|  q_{h_j}\|_{L^2(\Omega)}\to 0 \text{ as } j\to +\infty.\notag
\end{align}
As view above Estimate~\eqref{est: unif} is true for $|\mu|$ bounded, thus only $|\mu|$ goes to $+\infty $ can
give such sequences.
We can always normalized  $ (u_{h_j},r_{h_j})$ by homogeneity. In what follows, we do not write the index $j$ and 
we write $h$ instead of $h_j$. The reader must keep in mind that $h$ is in fact a sequence.
Finally with these reductions and notation, we have sequences $h$, $(u_h,r_h)$ and $(f_h,q_h)$, such that 
\be
\label{fluidec-3}
\left\{
\begin{array}{l}
-ih\nabla_g r_h+ u_h -ihbu_h=ih f_h, \,\hbox{ in } \Omega, \\
-ih \div_g u_h+ r_h= ihq_h, \,\hbox{ in } \Omega, \\
u_h. n=0 \text{ on } \pd\Omega
\end{array}
\right.
\ee
and 
\begin{align}
& \|  u_{h}\|_{(L^2(\Omega))^d} + \| r _{h}\|_{L^2(\Omega)} =1,\\
& \| f_{h} \|_{(L^2(\Omega))^d} + \|  q_{h}\|_{L^2(\Omega)}\to 0 \text{ as }h\to 0.\notag
\end{align}

 %
 %
 \subsection{Semiclassical measure} 
 
 To define a semiclassical measure associated with $\gamma_h=(  {u}_h,{r}_h)$ we have to prove that 
 $\gamma_h$ converges weakly to 0. To prove that we decompose $\gamma_h$ in a Littlewood-Paley type series.
 
 In the following we use functional calculus, we refer to \cite{R-S-1980} in particular Theorem VIII.6 for precise statements.
 
 Let $\psi $ be a $C^\infty(\R)$ cutoff function such the support of  $\psi $  is contained in $|s|\le 2$ and
$\psi(s)=1$ for $|s|\le 1$. We have 
\[
1=\lim_{N\to \infty}\psi(2^{-N}s)=\psi(s)+\sum_{k=1}^\infty(\psi(2^{-k}s)-\psi(2^{-k+1}s))=\psi(s)+\sum_{k=1}^\infty\varphi(2^{-k}s),
\]
where $\varphi (s)=\psi(s)-\psi(2s)$. 

Here we use the notation introduced in Introduction for ${\mathcal A}$ and ${\mathcal B}$. As $i{\mathcal A}$ is selfadjoint, we have 
\begin{align}
	\label{form: identity decomp}
Id&=\psi(iB^{-1}h{\mathcal A})+\sum_{k=1}^\infty\varphi(2^{-k}iB^{-1}h{\mathcal A})  \\
&=\psi(iBh{\mathcal A})+\sum_{k=1}^\infty\varphi(2^{-k}iB^{-1}h{\mathcal A})+\psi(iB^{-1}h{\mathcal A})-\psi(iBh{\mathcal A}),\notag
\end{align}
where $B>0$ will be chosen sufficiently large.
%
%
\begin{lem}
	\label{lem: L-W decomp}
Let $(u_h,r_h)$ satisfying  \eqref{fluidec-3}, let $B>0$ be  sufficiently large, let $\gamma_h=(u_h,r_h)^T$, 
we have 
\begin{align*}
&\| \psi(iBh{\mathcal A})\gamma_h \|\le Ch(\| f_h\| +\|q_h \|+\|\gamma_h  \|)  \\
&\sum_{k=1}^\infty\| \varphi(2^{-k}iB^{-1}h{\mathcal A})\gamma_h \| \le Ch(\| f_h\| +\|q_h \|+\|\gamma_h  \|).
\end{align*}
\end{lem}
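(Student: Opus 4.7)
The plan is to exploit the fact that $i\mathcal{A}$ is self-adjoint on $H$: integration by parts using the boundary condition $u\cdot n_{|\Gamma}=0$ yields $\mathcal{A}^*=-\mathcal{A}$ on $\mathcal{D}(\mathcal{A})$, so the spectral theorem applies to $\sigma := i\mathcal{A}$. First I would recast \eqref{fluidec-3} as the operator identity
\[
(I-ih\mathcal{A})\gamma_h \;=\; ihF_h + ih\,\mathcal{B}\mathcal{B}^*\gamma_h,
\]
where $F_h=(f_h,q_h)^T$ and $\mathcal{B}\mathcal{B}^*\gamma_h=(bu_h,0)^T$ satisfies $\|\mathcal{B}\mathcal{B}^*\gamma_h\|\le \|b\|_\infty\|\gamma_h\|$.

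Next, for each spectral localizer $\chi(iBh\mathcal{A})$ appearing in the statement, I would apply on the left the operator $\chi(iBh\mathcal{A})(I-ih\mathcal{A})^{-1}$ --- which by functional calculus corresponds to the bounded scalar multiplier $s\mapsto \chi(Bhs)/(1-hs)$ acting on $\sigma$. Since functions of $\sigma$ commute, the identity becomes
\[
\chi(iBh\mathcal{A})\gamma_h \;=\; ih\,\chi(iBh\mathcal{A})(I-ih\mathcal{A})^{-1}\bigl(F_h + \mathcal{B}\mathcal{B}^*\gamma_h\bigr),
\]
and taking $H$-norms together with the $L^\infty(\R)\to\mathcal{L}(H)$ bound of the functional calculus reduces everything to estimating $\|\chi(Bhs)/(1-hs)\|_{L^\infty(\R)}$.

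Two concrete cases then remain. For the first inequality with $\chi=\psi$: on $\supp\psi(Bh\,\cdot)$ one has $|hs|\le 2/B$, so $|1-hs|\ge 1-2/B>0$ as soon as $B>2$, and the multiplier is uniformly bounded; the first estimate follows at once. For the second inequality one takes the scaling $2^{-k}B^{-1}$ with $\chi=\varphi$, so that the support condition reads $|hs|\in[B\,2^{k-1},\,B\,2^{k+1}]$; if $B\ge 4$ then $|hs|\ge 2$ for every $k\ge 1$, hence $|1-hs|\ge |hs|/2\ge B\,2^{k-2}$, producing the multiplier bound $C\cdot 2^{-k}/B$. Summing the geometric series $\sum_{k\ge1}2^{-k}$ yields the second inequality.

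The argument is essentially direct once the functional calculus is set up, so the only real subtlety is the choice of $B$: it must be large enough that \emph{both} the low-frequency band $\{|hs|\lesssim 1/B\}$ localized by $\psi(iBh\mathcal{A})$ and every dyadic high-frequency shell $\{|hs|\sim B\,2^k\}$ localized by $\varphi(2^{-k}iB^{-1}h\mathcal{A})$ stay uniformly away from the singularity $hs=1$ of $(I-ih\mathcal{A})^{-1}$. The intermediate band $|hs|\sim 1$ is precisely what the remainder term $\psi(iB^{-1}h\mathcal{A})-\psi(iBh\mathcal{A})$ in the decomposition \eqref{form: identity decomp} is designed to capture, and is intentionally left to subsequent sections where semiclassical measures will be needed.
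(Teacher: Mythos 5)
Your proof is correct and follows essentially the same route as the paper: both rest on the functional calculus for the self-adjoint operator $i{\mathcal A}$ together with the observation that $1-hs$ is bounded below by a constant on the support of $\psi(Bh\,\cdot)$ and below by $c\,2^{k}B$ on the support of $\varphi(2^{-k}B^{-1}h\,\cdot)$, which produces the summable factor $2^{-k}B^{-1}$. The paper packages this as an absorption argument with $\tilde\psi(s)=s\psi(s)$ and $\tilde\varphi(s)=s\varphi(s)$ for $B$ large, whereas you divide directly by the bounded multiplier $\chi(Bhs)/(1-hs)$; the two are equivalent.
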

\begin{proof}
We can write \eqref{fluidec-3}, 
\[
-ih{\mathcal A}\gamma_h+\gamma_h-ih{\mathcal B}{\mathcal B}^*\gamma_h=h F_h
\]
with previous notation ${\mathcal B}^*=(\sqrt b,0)$ and $F_h=
i\begin{pmatrix}
f_h\\ q_h
\end{pmatrix}$.
Applying $ \psi(iBh{\mathcal A})$ on both side we have
\[
-iB^{-1}\tilde\psi(iBh{\mathcal A})\gamma_h+\psi(iBh{\mathcal A})\gamma_h
-ih\psi(iBh{\mathcal A}){\mathcal B}{\mathcal B}^*\gamma_h=h \psi(iBh{\mathcal A})F_h,
\]
where $\tilde\psi(s)=s\psi(s)$.
We obtain
\[
\| \psi(iBh{\mathcal A})\gamma_h \| \lesssim h\|\gamma_h  \|+h\|F_h  \|+B^{-1}\|  \tilde\psi(iBh{\mathcal A})\gamma_h \|.
\]
As $\tilde\psi(s)=s\chi(s)\psi(s) $ where $\chi$ is a smooth cutoff function, such that $\chi(s) = 1 $ for $|s|\le 2$.
We have 
\[
\|  \tilde\psi(iBh{\mathcal A})\gamma_h \|\lesssim \| \psi(iBh{\mathcal A})\gamma_h \| .
\]
This proves the first estimate choosing $B$ sufficiently large for absorbing this term with the left hand side.

We now apply $\varphi(2^{-k}iB^{-1}h{\mathcal A})$ on both side of  equation on $\gamma_h$, we have
\[
-i2^kB\tilde\varphi(2^{-k}iB^{-1}h{\mathcal A})\gamma_h+\varphi(2^{-k}iB^{-1}h{\mathcal A})\gamma_h
-ih\varphi(2^{-k}iB^{-1}h{\mathcal A}){\mathcal B}{\mathcal B}^*\gamma_h=h \varphi(2^{-k}iB^{-1}h{\mathcal A})F_h,
\]
where $\tilde \varphi(s)=s\varphi(s)$ and we obtain the estimate
\begin{equation}
	\label{eq: estimation tilde phi}
2^kB\| \tilde\varphi(2^{-k}iB^{-1}h{\mathcal A})\gamma_h  \|\lesssim  \|\varphi(2^{-k}iB^{-1}h{\mathcal A})\gamma_h  \|
+ h \| F_h \|+ h\|\gamma_h\|.
\end{equation}
On the support of $\varphi$, we have $1/2\le |s| \le 2$, thus $\varphi(s)\le  2 |s|\varphi(s)$. From functional calculus
$\|\tilde \varphi(\alpha ih{\mathcal A}) \|= \||\alpha ih{\mathcal A}|\varphi(\alpha ih{\mathcal A}) \|$ for all $\alpha>0$.
We deduce from~\eqref{eq: estimation tilde phi} and functional calculus.
\[
2^kB\| \varphi(2^{-k}iB^{-1}h{\mathcal A})\gamma_h  \|\lesssim  \|\varphi(2^{-k}iB^{-1}h{\mathcal A})\gamma_h  \|
+ h \| F_h \|+ h\|\gamma_h\|.
\]
If $B$ is sufficiently large, we can absorb the term $ \|\varphi(2^{-k}iB^{-1}h{\mathcal A})\gamma_h  \|$ by the left hand side 
and summing over $k$ we obtain the second estimate of the lemma.
\end{proof}
%
%
\begin{lem}
The  sequence $(u_h,r_h)$ converges weakly to $0$.
\end{lem}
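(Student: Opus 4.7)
My plan is to use the Littlewood--Paley type decomposition provided by Lemma~\ref{lem: L-W decomp}, together with the self-adjointness of $i\mathcal{A}$ on $H$. First I decompose $\gamma_h$ as
\[
\gamma_h \;=\; \psi(iBh\mathcal{A})\gamma_h \;+\; \sum_{k=1}^{\infty} \varphi(2^{-k}iB^{-1}h\mathcal{A})\gamma_h \;+\; M_h,
\]
where $M_h := [\psi(iB^{-1}h\mathcal{A}) - \psi(iBh\mathcal{A})]\gamma_h$ is the ``semiclassical middle'' piece. Lemma~\ref{lem: L-W decomp} bounds the first two summands by $Ch(\|f_h\|+\|q_h\|+\|\gamma_h\|)$; since $\|\gamma_h\|=1$ is bounded and $\|f_h\|+\|q_h\|\to 0$, these pieces tend to $0$ \emph{strongly} in $H$, and in particular weakly.

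It remains to show that $M_h\rightharpoonup 0$. Here I rely on the fact that $\mathcal{A}$ is skew-adjoint on $\mathcal{D}(\mathcal{A})$, since it is the generator of the unitary $C_0$-group associated with the undamped system~\eqref{fluidec}; the required integration-by-parts identity is immediate because both $u$ and any test vector field in $\mathcal{D}(\mathcal{A})$ satisfy $u\cdot n|_\Gamma=0$. Consequently $i\mathcal{A}$ is self-adjoint, and by the Borel functional calculus the operators $\psi(iBh\mathcal{A})$ and $\psi(iB^{-1}h\mathcal{A})$ are self-adjoint and bounded on $H$ by $\|\psi\|_\infty$. For any fixed $\phi\in H$, self-adjointness yields
\[
(M_h,\phi)_H \;=\; \bigl(\gamma_h,\,[\psi(iB^{-1}h\mathcal{A})-\psi(iBh\mathcal{A})]\phi\bigr)_H.
\]
As $h\to 0$, the spectral theorem combined with dominated convergence (using $\psi(0)=1$ and $|\psi|\le 1$) gives $\psi(iBh\mathcal{A})\phi\to\phi$ and $\psi(iB^{-1}h\mathcal{A})\phi\to\phi$ strongly in $H$, so the right-hand factor tends to $0$ in norm. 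Since $\|\gamma_h\|$ is uniformly bounded, $(M_h,\phi)_H\to 0$, which is exactly $M_h\rightharpoonup 0$.

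Combining the two steps gives $\gamma_h\rightharpoonup 0$ in $H$. I do not anticipate any serious obstacle at this step: the argument is essentially formal once the self-adjointness of $i\mathcal{A}$ is in hand, and reduces to functional calculus layered on top of the norm bounds already supplied by Lemma~\ref{lem: L-W decomp}. The genuine difficulty of the paper comes afterwards, when one must analyze the non-trivial middle piece $M_h$ via its semiclassical defect measure and invoke propagation of support together with the geometric control condition on $\{b>0\}$.
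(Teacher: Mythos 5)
Your proof is correct and follows essentially the same route as the paper: the same decomposition \eqref{form: identity decomp} with Lemma~\ref{lem: L-W decomp} handling the high/low frequency pieces strongly, and the same duality--plus--functional-calculus argument (the paper writes the middle piece as $\chi(ih\mathcal{A})\gamma_h$ with $\chi(s)=\psi(B^{-1}s)-\psi(Bs)$ and uses $\chi(0)=0$ with dominated convergence, which is exactly your observation that each $\psi(\cdot)\phi\to\phi$ strongly).
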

\begin{proof}
Let $\chi$ be defined by $\chi(s)=\psi(B^{-1}s)-\psi(Bs)$. From \eqref{form: identity decomp} and 
Lemma~\ref{lem: L-W decomp}, we have
\begin{equation}
\label{eq: semiclassical form}
\gamma_h=\chi(ih{\mathcal A})\gamma_h+O(h).
\end{equation}
For $v\in L^2(\Omega)^{d+1}$, we have
\[
(\chi(ih{\mathcal A})\gamma_h,v)=(\gamma_h,\chi(ih{\mathcal A})v).
\]
The sequence $(\gamma_h)_h$ is bounded and $(\chi(ih{\mathcal A})v)_h$ converges to 0. Indeed
from functional calculus
\[
\| \chi(ih{\mathcal A})v\|^2=\int  \chi^2(h\lambda)(dE_\lambda v,v),
\]
where $(E_\lambda)_\lambda$ is the  family of projector associated to $i{\mathcal A}$. In particular $(dE_\lambda v,v)$
is a bounded measure. As $ \chi^2(h\lambda)$ converges every where to 0 and $ \chi^2\le 1$, dominated convergence theorem
implies  $\int  \chi^2(h\lambda)(dE_\lambda v,v)$ converges to 0. This proves the result
\end{proof}

As $u_h$ and $r_h$ are bounded functions in $L^2(\Omega)$ we extend these functions by 0 in exterior of $\Omega$,
 and we denote respectively by  $\underline {u}_h$ and  $\underline{r}_h$ these extensions. Clearly 
 $( (\underline{u}_h, \underline{r}_h)$ converges weakly to 0 in $(L^2(\R^d))^d\times L^2(\R^d)$.

 Let $a_j^k(x,\xi)$, $b^\ell(x,\xi)$, $c(x,\xi)$ symboles  of order 0. We shall consider $b=(b^\ell)$ as a vector field (or a contravariant vector field) and $a=(a_j^k) $ as a 1-covariant, 1-contravariant tensor. We assume $a$ symmetric in the 
 following sense, $g^{\ell j}a_j^k=g^{k j}a_j^\ell$ and this relation is equivalent to $g_{\ell j}a^j_k=g_{k j}a^j_\ell$. Both
 formulas are useful.

As $( (\underline{u}_h, \underline{r}_h)$ converges weakly to 0,  up to a subsequence of $h$, it is possible to  define the semiclassical measure by the following relation,
 \begin{align}
 	\label{def: measure}
& ((\ops(a_j^k) \underline {u}_h^j) ,  (\underline{u}_h^\ell) )_g+  ((\ops(b^\ell  ) \underline{r}_h) ,(\underline{u}_h^j) )_g+(  g_{jk}\ops(b^j)\underline{u}_h^k,\underline{r}_h)_g +
(\ops(c)\underline{r}_h,\underline{r}_h)_g  \notag\\
&\text{converges to }\\
&\langle \mu_j^k,a_k^j\rangle+2\langle \nu_j,b^j\rangle +\langle m, c \rangle  \text{ as } h \text{ goes to 0,}\notag
 \end{align}
 where the measure $(\mu_j^k)_{1\le j,k\le d}$ is symmetric in the sense $g^{\ell j}\mu_j^k=g^{k j}\mu_j^\ell$ which is
 related with the symmetry on $(a_j^k)_{1\le j,k\le d}$.
 From \eqref{fluidec-3}, the functions $\underline{u}_h$ and $\underline{r}_h$ satisfy the system 
 \begin{equation}
 	\label{eq: syst semi-class}
 \begin{cases}
  &\underline{u}_h-ih\nabla_g \underline{r}_h=hf_h -ih (r_h)_{|\pd \Omega} \nabla_g(\chi_\Omega),\\
& -ih\div_g \underline{u}_h+\underline{r}_h=h q_h,
 \end{cases}
 \end{equation}
 where $\nabla_g(\chi_\Omega)$ is a measure on $\pd \Omega$. We shall give in what follows a form of 
 $\nabla_g(\chi_\Omega)$ in  adapted local coordinates. As $u_h. n=0$ at boundary, second equation does not have
 measure terms at the right hand side.
  \begin{prop}
 	\label{Prop: measure support and form}
 The measures $\mu_j^k$, $\nu_k$ and $m$ are supported on the characteristic set 
 \[
\{(x,\xi)\in \R^d\times\R^d,\     (g^{jp}(x)\xi_p  \xi_j-1)=0 \text{ and } x\in \overline{\Omega}\}, 
 \]   
   and satisfy the following equations
 \begin{align*}
   &\nu_k=-\xi_k m, \\
 &  \mu_k^j = g^{jp}\xi_p \xi_k m \text{ for all } 1\le j,k\le d.
 \end{align*}
 \end{prop}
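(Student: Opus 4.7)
The plan is to test the semiclassical system~\eqref{eq: syst semi-class} against the pseudo-differential operators built from the symbols $a_j^k$, $b^\ell$, $c$ appearing in~\eqref{def: measure}, and to pass to the limit $h\to 0$. The starting point is the following reformulation: using $ih\partial_k = -\ops(\xi_k)$ and absorbing the density factor $|g|^{1/2}$ via an $O(h)$ commutator, the two scalar equations of~\eqref{eq: syst semi-class} read
\[
\underline{u}_h^{\,j} = -\ops(g^{jk}\xi_k)\underline{r}_h + O_{L^2}(h),\qquad
\underline{r}_h = -\ops(\xi_k)\underline{u}_h^{\,k} + O_{L^2}(h).
\]
For test symbols supported in $T^*\Omega$, the boundary distribution $-ih(r_h)_{|\partial\Omega}\nabla_g(\chi_\Omega)$ contributes only an $O(h)$ term after pairing, so it is absorbed in the error. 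Combining the two identities yields $(I - \ops(g^{jk}\xi_j\xi_k))\underline{r}_h = O_{L^2}(h)$; pairing with $\ops(c)\underline{r}_h$ and letting $h\to 0$ gives $\langle (1 - g^{jk}\xi_j\xi_k)\, m,\, c\rangle = 0$, so $m$ is supported on the characteristic set $\{g^{jk}\xi_j\xi_k = 1\}$. The support of $\nu$ and $\mu$ will then follow automatically once the two algebraic relations below are established, since these relations express $\nu$ and $\mu$ as smooth multiples of $m$.

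To derive $\nu_k = -\xi_k m$, I substitute $\underline{u}_h^{\,k} = -\ops(g^{k\ell}\xi_\ell)\underline{r}_h + O(h)$ into each of the two crossed terms of~\eqref{def: measure}. In $(g_{jk}\ops(b^j)\underline{u}_h^{\,k},\, \underline{r}_h)_g$, after symbolic composition and the contraction $g_{jk}g^{k\ell} = \delta_j^\ell$, the term collapses to $-(\ops(b^j\xi_j)\underline{r}_h,\underline{r}_h)_g + O(h)$, whose limit is $-\langle \xi_j m,\, b^j\rangle$. In $(\ops(b^\ell)\underline{r}_h,\, \underline{u}_h^{\,j})_g$, after substitution and conjugation one moves $\ops(g^{jp}\xi_p)$ to the left factor by adjunction (it equals itself modulo $O(h)$, its symbol being real), yielding the same limit $-\langle \xi_\ell m,\, b^\ell\rangle$. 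Summing, $2\langle \nu_j, b^j\rangle = -2\langle \xi_j m, b^j\rangle$, hence $\nu_j = -\xi_j m$. For $\mu$, the same substitution applied to \emph{both} copies of $\underline{u}_h$ in $(\ops(a_j^k)\underline{u}_h^{\,j},\, \underline{u}_h^{\,\ell})_g$, together with adjunction for $\ops(g^{\ell q}\xi_q)$ and the contraction $g_{k\ell}g^{\ell q} = \delta_k^q$, reduces the term to $(\ops(g^{jp}\xi_p\xi_k a_j^k)\underline{r}_h,\underline{r}_h)_g + O(h)$; its limit $\langle g^{jp}\xi_p\xi_k m,\, a_j^k\rangle$ matches $\langle \mu_j^k,\, a_k^j\rangle$ after relabeling indices, giving $\mu_k^j = g^{jp}\xi_p\xi_k m$. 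The symmetry $g^{\ell j}\mu_j^k = g^{kj}\mu_j^\ell$ announced after~\eqref{def: measure} is then immediate from the symmetry of $g$.

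The main obstacle I anticipate is the rigorous handling of the boundary term $-ih(r_h)_{|\partial\Omega}\nabla_g(\chi_\Omega)$ in equation~(1) of~\eqref{eq: syst semi-class}. For test symbols supported strictly inside $T^*\Omega$ it is harmless, but the proposition concerns measures on $T^*\overline\Omega$, so one must verify that the boundary contribution is negligible in the limit also when the test symbols reach up to $\partial\Omega$. The prefactor $ih$ combined with uniform $L^2$-boundedness of $(\underline{u}_h, \underline{r}_h)$ should suffice once trace estimates on $(r_h)_{|\partial\Omega}$ are available; the finer structure of the measure at the boundary, including non-tangential and gliding bicharacteristics, is precisely what is developed in Sections~\ref{Sec: Propagation of the support}--\ref{Sec: Estimate of boundary trace, case strictly diffractive}.
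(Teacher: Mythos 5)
Your interior argument is correct and in fact takes a slightly more direct route than the paper: instead of forming the three symmetrized quantities $I_1,I_2,I_3$ of \eqref{def: I1 measure description}--\eqref{def: I3 measure description} and solving the resulting $3\times3$ linear system \eqref{eq: syst on measures} for $\mu_k^j,\nu_k,m$, you substitute $\underline{u}_h^j=-\ops(g^{jk}\xi_k)\underline{r}_h+O_{L^2}(h)$ and $\underline{r}_h=-\ops(\xi_k)\underline{u}_h^k+O_{L^2}(h)$ directly into the quadratic forms of \eqref{def: measure}, which yields $(1-g^{jk}\xi_j\xi_k)m=0$ and the two algebraic relations in one stroke (the supports of $\nu$ and $\mu$ then being contained in $\supp m$ since the relations hold globally). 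This is legitimate for test symbols compactly supported in $T^*\Omega$, and it buys a shorter computation at the price of composing with the order-one operators $\ops(g^{jk}\xi_k)$ in both slots, which is harmless because the test symbols are compactly supported. One small point to make explicit: \eqref{def: measure} only defines $\mu_j^k$ against symbols satisfying $g^{\ell j}a_j^k=g^{kj}a_j^\ell$, so you should check (as is immediate) that $g^{jp}\xi_p\xi_k m$ is symmetric in that sense before concluding $\mu_k^j=g^{jp}\xi_p\xi_k m$.

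The genuine gap is at the boundary, which you flag as "the main obstacle" but do not close. The identity $\underline{u}_h^j=-\ops(g^{jk}\xi_k)\underline{r}_h+O_{L^2}(h)$ is false up to the boundary: the extension carries the singular term $-ih\,(r_h)_{|\pd\Omega}\nabla_g(\chi_\Omega)$, which is a surface measure, not an $L^2$ function, and the prefactor $h$ together with $L^2$-boundedness of $(\underline{u}_h,\underline{r}_h)$ does \emph{not} suffice to kill it. Two nontrivial inputs are required. First, a quantitative bound on a pseudodifferential operator applied to a boundary layer, $\|\ops(a)(w\otimes\delta_{x_d=0})\|\le Ch^{-1/2}|w|_{L^2(\R^{d-1})}$ (Lemma~\ref{lem: est tensor pd}): the operator loses $h^{-1/2}$, so the contribution is $O\bigl(h^{1/2}|(r_h)_{|x_d=0}|\bigr)$, and with your double substitution it is $O\bigl(h\,|(r_h)_{|x_d=0}|^2\bigr)$. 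Second, and crucially, the trace estimate $h|\Lambda^{1/2}(r_h)_{|x_d=0}|^2\to0$ (Lemma~\ref{lem: first est trace}); this is not "available" from $L^2$ bounds (an $L^2$ function has no trace) and its proof is the bulk of the paper's argument for this proposition, requiring a microlocal decomposition into hyperbolic, glancing and elliptic regions, with the glancing region handled only through a $\limsup_{h\to0}h|(\rh)_{|x_d=0}|^2\lesssim\eps^{1/2}$ bound and the elliptic region through a G{\aa}rding-type positivity argument. Without these two lemmas your proof establishes the proposition only for test symbols supported in the open set $\Omega$, not on $\overline\Omega$ as stated.
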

 Obviously, the measures are null  in a neighborhood of $(x,\xi)$ if $x\in\R^d\setminus \overline{\Omega}$ as   
  $(\underline{u}_h,\underline{r}_h)$ is null in a neighborhood of $x$.
We describe first the semiclassical measure in $\Omega$. The description in a neighborhood of the boundary 
 will be done in the following. 
 For  symbols $ a_j^k $ compactly supported in $\Omega$, we consider 
 \begin{equation}
 	\label{def: I1 measure description}
 I_1(h)=\big((\ops(a_j^k)(\underline{u}_h^j-ih(\nabla_g \underline{r}_h)^j)) , (\underline{u}_h^\ell)\big)_g
 +\big((\ops(a_j^k)\underline{u}_h^j), (\underline{u}_h^\ell-ih(\nabla_g \underline{r}_h)^\ell)\big)_g.
 \end{equation}
 From System~\eqref{eq: syst semi-class}, we have $I_1(h)$ goes to 0 as $h$. Observe that 
 $-ih(\nabla_g )^j\underline{r}_h=\ops(g^{jk})\xi_k)\underline{r}_h+hV\underline{r}_h$ where $V$ is bounded and 
 $-ih\div_gw=\ops(\xi_k)w^k+hWw$ where $W$ is bounded. In what follows we denote 
 by $O(h)$ all terms bounded by $h$ in $L^2$-norm.
 From integration by parts formula and symbolic calculus, we have 
 \begin{align*}
 &\big((\ops(a_j^k)(-ih(\nabla_g \underline{r}_h)^j)) , (\underline{u}_h^\ell)\big)_g= 
  \big((\ops(a_j^kg^{jp}\xi_p)\underline{r}_h), (\underline{u}_h^\ell)\big)_g+O(h)\\
 &\big((\ops(a_j^k)\underline{u}_h^j), (-ih(\nabla_g \underline{r}_h)^\ell)\big)_g
 =\big(-ih\div_g(\ops(a_j^k)\underline{u}_h^j),  \underline{r}_h\big)_g
 = (\ops(a_j^k\xi_k)\underline{u}_h^j,  \underline{r}_h)_g  +O(h).
 \end{align*}
 Then 
 \begin{equation*}
 I_1(h)=2\big((\ops(a_j^k)\underline{u}_h^j) , (\underline{u}_h^\ell)\big)_g
 +\big((\ops(a_j^kg^{jp}\xi_p)\underline{r}_h), (\underline{u}_h^\ell)\big)_g
 + \big(\ops(a_j^k\xi_k)\underline{u}_h^j,  \underline{r}_h\big)_g +O(h) .
 \end{equation*}
 We can apply \eqref{def: measure} as, by the symmetry on $a$, we have 
 $g_{qk}a_j^kg^{jp}\xi_p=g_{jk}a_q^kg^{jp}\xi_p= \delta_k^pa_q^k\xi_p=a_q^p\xi_p$ thus, the condition on the
 $b^j$ is satisfied.
 We obtain
 \[
2 \langle \mu_k^j  , a_j^k  \rangle + 2\langle  \nu _k, a_j^kg^{jp}\xi_p \rangle  = 0,
 \]
 for all  symbols $ a_j^k $ compactly supported in $\Omega$, then 
 \begin{equation}
 	\label{eq: first on measure}
  \mu_k^j +  g^{jp}\xi_p \nu _k=0 \text{ for all } 1\le j,k\le d.
 \end{equation}
 
  For  symbols $ b^k $ compactly supported in $\Omega$, we consider 
 \begin{align}
 	\label{def: I2 measure description}
 I_2(h)&=\big((\ops(b^k)(-ih\div_g \underline{u}_h+\underline{r}_h)) , (\underline{u}_h^\ell)\big)_g
 +\big(\ops(g_{jk}b^j) (\underline{u}_h^k-ih(\nabla_g \underline{r}_h)^k)   ,\underline{r}_h\big)_g  \\
 &\quad+\big((\ops(b^q)\underline{r}_h) ,  (\underline{u}_h^k-ih(\nabla_g \underline{r}_h)^k) \big)_g
 + \big(\ops(g_{jk}b^j) \underline{u}_h^k, -ih\div_g \underline{u}_h+\underline{r}_h\big)_g . \notag
 \end{align}
  From System~\eqref{eq: syst semi-class}, we have $I_2(h)$ goes to 0 as $h$. 
 From integration by parts formula and symbolic calculus, we have 
 \begin{align*}
 I_2(h)&=\big( (\ops(b^k\xi_j)\underline{u}_h^j) , (\underline{u}_h^\ell) \big)_g 
 +\big( (\ops(b^k)\underline{r}_h) ,   (\underline{u}_h^\ell)\big)_g 
 +\big( \ops(g_{jk}b^j) \underline{u}_h^k , \underline{r}_h \big)_g \\
 &\quad+\big( \ops(g_{jk}g^{kq}b^j\xi_q) \underline{r}_h  ,\underline{r}_h  \big)_g 
 +\big((\ops(b^q)\underline{r}_h) ,  (\underline{u}_h^k) \big)_g 
 +\big(  \ops(b^q\xi_q)\underline{r}_h , \underline{r}_h )_g   \\
 &\quad +\big(( \ops(g_{jk}b^jg^{pq}\xi_p) \underline{u}_h^k ), (\underline{u}_h^\ell) \big)_g 
 + \big(\ops(g_{jk}b^j) \underline{u}_h^k, \underline{r}_h\big)_g +O(h)\\
 &=\big( (\ops(b^k\xi_j+g_{jq}b^qg^{pk}\xi_p)\underline{u}_h^j) , (\underline{u}_h^\ell) \big)_g 
 +2\big( (\ops(b^k)\underline{r}_h) ,   (\underline{u}_h^\ell)\big)_g 
 +2\big( \ops(g_{jk}b^j) \underline{u}_h^k , \underline{r}_h \big)_g \\
 &\quad+2\big( \ops(b^j\xi_j) \underline{r}_h  ,\underline{r}_h  \big)_g+O(h).
 \end{align*}
 To apply  \eqref{def: measure} we have to respect some symmetry.
 We have 
 \[
 g_{k\ell} (b^k\xi_j+g_{jq}b^qg^{pk}\xi_p)=g_{k\ell} b^k\xi_j+g_{jq}b^q\xi_\ell,
 \]
 which is symmetric with respect $j$ and $\ell$.
 We then obtain, as 
 \[
  \langle \mu_k^j  ,  b^k\xi_j+g_{jq}b^qg^{pk}\xi_p \rangle +4 \langle \nu_k,  b^k \rangle + 2 \langle m  ,  b^k\xi_k \rangle 
  = \langle \xi_j\mu_k^j  ,  b^k\rangle
  + \langle g_{jq}g^{pk}\xi_p  \mu_k^j  ,  b^q\rangle +4 \langle \nu_k,  b^k \rangle + 2 \langle \xi_k  m  ,  b^k\rangle ,
 \]
 the following equation
 \begin{equation}
 	\label{eq: second on measure}
	\xi_j\mu_k^j   +  g_{jk}g^{pq}\xi_p  \mu_q^j  +4\nu_k +2  \xi_k  m =0 \text{ for all } 1\le k\le d.
 \end{equation}
   For  symbols $ c $ compactly supported in $\Omega$, we consider 
\begin{align}
	\label{def: I3 measure description}
I_3(h)=\big( \ops (c)(-ih\div_g \underline{u}_h+\underline{r}_h ),\underline{r}_h \big)_g  
+\big( \ops(c)  \underline{r}_h , -ih\div_g \underline{u}_h+\underline{r}_h \big)_g.
\end{align}
 From System~\eqref{eq: syst semi-class}, we have $I_3(h)$ goes to 0 as $h$ and integrating by parts we obtain
 \begin{align}
 I_3(h)= 2\big( \ops (c) \underline{r}_h ,\underline{r}_h  \big)_g 
 +\big( \ops (c\xi_j)\underline{u}_h^j,\underline{r}_h  \big)_g  
 +\big( \ops(cg^{kj} \xi_j)  \underline{r}_h ,  \underline{u}_h^j\big)_g  +O(h).
 \end{align}
Observe that $g_{pk}g^{kj}c\xi_j=c\xi_p$ we may apply~\eqref{def: measure} and we obtain
\begin{align}
	\label{eq: third on measure}
	m+g^{kj}\xi_j\nu_k=0.
\end{align}
 From~\eqref{eq: first on measure}, \eqref{eq: second on measure} and \eqref{eq: third on measure} we obtain 
 the following system on measure $\mu_k^j$, $\nu_k$ and $m$.
 \begin{equation}
 	\label{eq: syst on measures}
 \begin{cases}
  \mu_k^j +  g^{jp}\xi_p \nu _k=0 \text{ for all } 1\le j,k\le d,\\
 \xi_j\mu_k^j   +  g_{jk}g^{pq}\xi_p  \mu_q^j  +4\nu_k +2  \xi_k  m =0 \text{ for all } 1\le k\le d,\\
 m+g^{kj}\xi_j\nu_k=0.
 \end{cases}
 \end{equation}
 From symmetry on $(\mu_j^k)_{1\le j,k\le d}$ we may write the second equation
 \[
  \xi_j\mu_k^j   +  g_{jk}g^{jq}\xi_p  \mu_q^p  +4\nu_k +2  \xi_k  m =0 \text{ for all } 1\le k\le d,
 \]
thus we have 
 \[
 2 \xi_j\mu_k^j   +4\nu_k +2  \xi_k  m =0 \text{ for all } 1\le k\le d.
 \]
 Replace $\mu_k^j $ in this equation from the first equation of \eqref{eq: syst on measures} we have
 \begin{equation}
 	\label{eq: on mu and nu}
 - g^{jp}\xi_p  \xi_j\nu _k  +2\nu_k +  \xi_k  m =0 \text{ for all } 1\le k\le d.
 \end{equation}
 Multiplying this equation by $g^{kq}\xi_q$ and from the third equation of  \eqref{eq: syst on measures} which is 
  $ g^{jp} \xi_p\nu _k =-m$, this yields
 \[
  g^{jp}\xi_p  \xi_jm  -2m + g^{kq}\xi_q \xi_k  m =0.
 \]
 Then 
 \begin{equation}
   (g^{jp}\xi_p  \xi_j-1)m  =0 ,
 \end{equation}
 which implies $m$ supported on $  (g^{jp}\xi_p  \xi_j-1)=0$.
 If $  (g^{jp}\xi_p  \xi_j-1)\ne 0$, from \eqref{eq: syst on measures} and  as $m=0$ we have the following system 
 \begin{equation}
 	\label{eq: syst on measures 2}
 \begin{cases}
  \mu_k^j +  g^{jp}\xi_p \nu _k=0 \text{ for all } 1\le j,k\le d,\\
  \xi_j\mu_k^j   +2\nu_k  =0 \text{ for all } 1\le k\le d,\\
 g^{kj}\xi_j\nu_k=0.
 \end{cases}
 \end{equation}
 Multiplying the first equation by $g^{k\ell}\xi_\ell$ we obtain
 \[
g^{k\ell}\xi_\ell \mu_k^j + g^{k\ell}\xi_\ell g^{jp}\xi_p \nu _k=0 \text{ for all } 1\le j\le d.
 \]
 From the third equation of \eqref{eq: syst on measures 2} and the symmetry $g^{k\ell}\mu_k^j =g^{kj}\mu_k^\ell$, 
 we have
  \[
g^{kj}\xi_\ell \mu_k^\ell =0 \text{ for all } 1\le j\le d,
 \]
 and as $(g^{kj})_{1\le j,k\le d}$ is invertible we obtain $\xi_\ell \mu_j^\ell =0 \text{ for all } 1\le j\le d$ and  the second
 equation of \eqref{eq: syst on measures 2} implies $\nu_k=0$ for $1\le k\le d$ and the first $ \mu_k^j =0$ 
 for $1\le k,j\le d$.
 Thus all the measures $ \mu_k^j$, $\nu_k$ and $m$ are supported on $ (g^{jp}\xi_p  \xi_j-1)  =0$.
 
 Now from \eqref{eq: on mu and nu} as on the support of measures $g^{jp}\xi_p  \xi_j=1$, we have
 \[
 \nu_k=-\xi_k m, 
 \]
 and from the first equation of \eqref{eq: syst on measures} we have
 \[
 \mu_k^j = g^{jp}\xi_p \xi_k m \text{ for all } 1\le j,k\le d.
 \]
This achieves the proof of Proposition~\ref{Prop: measure support and form} in $\Omega$. 

Now we have to prove same result at the boundary. The proof is a consequence of the two following lemmas.
%
%
 \begin{lem} \label{lem: est tensor pd}
 Let $a$ be a symbol compactly supported, then there exists $C>0$, such that, we have 
 \[
 \|\ops(a)(w\otimes \delta_{x_d=0})\|\le C h^{-1/2}|w|_{L^2(\R^{d-1})}  ,
 \]
 for all $w$ in the Schwartz space of $\R^{d-1}_{x'}$.
 \end{lem}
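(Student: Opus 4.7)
The plan is to proceed by duality against Schwartz test functions, reducing the estimate to a semi-classical boundary trace inequality. For any $\phi$ in the Schwartz space of $\R^d$, the formal action of $\ops(a)$ on the boundary distribution yields
\[
\bigl(\ops(a)(w\otimes\delta_{x_d=0}),\phi\bigr)_{L^2(\R^d)} = \bigl(w,\,(\ops(a)^*\phi)_{|x_d=0}\bigr)_{L^2(\R^{d-1})},
\]
so by Cauchy--Schwarz the lemma reduces to proving the semi-classical trace bound
\[
\bigl|(\ops(a)^*\phi)_{|x_d=0}\bigr|_{L^2(\R^{d-1})} \le C\,h^{-1/2}\,\|\phi\|_{L^2(\R^d)},
\]
uniformly in $\phi$, and then taking the supremum over unit $\phi$.

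To establish this trace inequality, I would start from the one-dimensional identity $|v(x',0)|^2 = -\int_0^\infty \pd_d(|v|^2)\,dx_d$ applied to $v=\ops(a)^*\phi$. Cauchy--Schwarz followed by an AM--GM split with weight $h$ gives
\[
\bigl|v_{|x_d=0}\bigr|_{L^2(\R^{d-1})}^2 \le h^{-1}\|v\|_{L^2(\R^d)}^2 + \|hD_d v\|_{L^2(\R^d)}^2.
\]
The $L^2$-boundedness of $\ops(a)^*$ (estimate~\eqref{est: norm L2}) controls the first term by $Ch^{-1}\|\phi\|^2$. For the second term, the semi-classical commutator identity $[hD_d,\ops(a)^*]=-ih\,\ops(\pd_d\bar a)$ recalled in the text allows us to write $hD_d v = \ops(a)^*(hD_d\phi) + hR\phi$ with $R$ bounded on $L^2$, and thereby to move the normal derivative off $v$.

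The main obstacle is that $\|hD_d\phi\|_{L^2}$ is not controlled by $\|\phi\|_{L^2}$ for generic $\phi\in L^2$. To circumvent this, I would exploit the compact support of $a$ in $(x,\xi')$: since the symbol vanishes outside a compact set in $x$, the output $v=\ops(a)^*\phi$ is supported in a fixed slab $\{0\le x_d\le R\}$, and the trace identity can be localized by a cutoff vanishing at $x_d=R$. On this bounded slab a Poincar\'e-type estimate combined with the commutator expansion and the $L^2$-boundedness absorbs the $hD_d\phi$ contribution, yielding control of $\|hD_d v\|$ purely in terms of $\|\phi\|_{L^2}$ up to an $O(h)$ remainder. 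Collecting the estimates then produces $\|\ops(a)(w\otimes\delta_{x_d=0})\|\le Ch^{-1/2}|w|_{L^2(\R^{d-1})}$, as claimed.
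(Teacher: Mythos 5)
Your duality reduction is sound in principle, but the key analytic step fails as written. The trace inequality you need hinges on bounding $\|hD_d v\|_{L^2(\R^d)}$ for $v=\ops(a)^*\phi$ by $C\|\phi\|_{L^2(\R^d)}$, and your proposed mechanism for this cannot work. Commuting $hD_d$ past $\ops(a)^*$ leaves you with $\ops(a)^*(hD_d\phi)$, which for generic $\phi\in L^2$ is not controlled, and your suggested repair --- compact support of the symbol in $x$, a slab localization, and ``a Poincar\'e-type estimate'' --- goes in the wrong direction: Poincar\'e bounds a function by its derivative, never a derivative by the function, and no spatial localization of $v$ can prevent $\|\pd_d v\|$ from being arbitrarily large compared with $\|v\|$. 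The ingredient you actually need, and never invoke, is the compact support of $a$ in the \emph{normal frequency} $\xi_d$ (the lemma's ``compactly supported'' is in all of $(x,\xi)$, and the statement is simply false without compactness in $\xi_d$: take $a=\chi(x)$, for which $\ops(a)(w\otimes\delta_{x_d=0})$ is not even in $L^2$). With that support property the correct move is not to commute but to compose: $hD_d\,\ops(a)^*=\ops(\xi_d\bar a)+h R$ with $\xi_d\bar a$ a bounded, compactly supported symbol and $R$ bounded on $L^2$, whence $\|hD_d v\|\lesssim\|\phi\|$ directly, and the one-dimensional trace identity then gives $|v_{|x_d=0}|^2\le h^{-1}(\|v\|^2+\|hD_dv\|^2)\lesssim h^{-1}\|\phi\|^2$ (note also that your AM--GM display carries the wrong weight on the second term; it should be $h^{-1}\|hD_dv\|^2$).

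For comparison, the paper avoids the trace inequality altogether: it approximates $a$ by sums of tensor products $a_1(x',\xi')\chi(x_d)\ell(\xi_d)$, for which $\ops(a)(w\otimes\delta_{x_d=0})=\chi\,(\ops(a_1)w)\otimes\ops(\ell)\delta_{x_d=0}$, and the factor $h^{-1/2}$ comes from the explicit computation $\|\ops(\ell)\delta_{x_d=0}\|_{L^2(\R)}\lesssim h^{-1/2}$ --- which is exactly where the compact support of $\ell$ in $\xi_d$ enters. Your duality route can be completed and is arguably cleaner (no approximation argument), but only once you replace the Poincar\'e step by the $L^2$-boundedness of $hD_d\,\ops(a)^*$ coming from the $\xi_d$-compactness of the symbol.
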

%
%
 \begin{lem}
 	\label{lem: first est trace}
 Let $\underline{u}_h$ and $\underline{r}_h$ solution of System~\eqref{eq: syst semi-class} then 
$ \ds \lim_{h\to 0}h|\Lambda^{1/2}({r}_h)_{|x_d=0}|^2=0$.
 \end{lem}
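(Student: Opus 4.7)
The plan is to work in geodesic coordinates in a neighborhood of a boundary point, where $g^{dd}=1$, $g^{dj}=0$ for $j<d$ and the outward unit normal is $-\partial_d$. The $d$-th component of the first equation of~\eqref{eq: syst semi-class} then reads $-ih\partial_d r_h + u_h^d - ihbu_h^d = ihf_h^d$, which I would solve algebraically to obtain
\[
h\partial_d r_h = -iu_h^d + O(h) \text{ in } L^2(\Omega).
\]
Multiplying the full first equation by $ih$ also yields $\|h\nabla_g r_h\|\le\|u_h\|+O(h)\le C$, hence $\|\Lambda r_h\|_{L^2}\le C$ uniformly in $h$.

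Next I would derive a trace identity via the fundamental theorem of calculus. Let $\chi\in C^\infty_c([0,+\infty))$ with $\chi(0)=1$. Writing the boundary inner product as $-\int_\Omega \partial_d\bigl(\chi\, \Lambda r_h\,\overline{r_h}\,|g|^{1/2}\bigr)\,dx$ and using that $\Lambda$ is tangential, self-adjoint, and commutes with multiplication by $\chi(x_d)$, one gets
\[
|\Lambda^{1/2}(r_h)_{|x_d=0}|^2 = -2\,\Re(\chi\Lambda\partial_d r_h,r_h)_\Omega - (\chi'\Lambda r_h,r_h)_\Omega + R,
\]
where $R$ is a harmless term produced by $\partial_d|g|^{1/2}$ of the same type. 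Multiplying by $h$, the two side terms are $O(h)$ since $\|\Lambda r_h\|,\|r_h\|\le C$, while inserting $h\partial_d r_h=-iu_h^d+O(h)$ in the main term and using $\Re(-iz)=\Im z$ together with the commutation of $\Lambda$ with $\chi$ yields
\[
h\,|\Lambda^{1/2}(r_h)_{|x_d=0}|^2 = -2\,\Im(\chi u_h^d,\Lambda r_h)_\Omega + O(h).
\]

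To conclude I need $\Im(\chi u_h^d,\Lambda r_h)\to 0$. Using $u_h^d=ih\partial_d r_h+O(h)$ once more,
\[
\Im(\chi u_h^d,\Lambda r_h) = h\,\Re(\chi\partial_d r_h,\Lambda r_h) + O(h) = \Re\bigl(\op(c)r_h,r_h\bigr) + O(h),
\]
where $c(x,\xi)=\chi(x_d)\,i\xi_d\langle\xi'\rangle$ is the (leading) symbol of $\chi\,\Lambda\,(h\partial_d)$. Since $r_h$ is semiclassically regular and the measure $m$ of Proposition~\ref{Prop: measure support and form} is supported on the compact characteristic set $g^{jp}\xi_p\xi_j=1$, one has $(\op(c)r_h,r_h)\to\int c\,dm$. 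As $c$ is purely imaginary on real $(x,\xi)$ and $m$ is a real non-negative measure, $\Re\int c\,dm=0$, and the claim follows.

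The main technical obstacle is the last convergence: the semiclassical measure in~\eqref{def: measure} was defined only against test symbols of order zero, whereas $c$ is of order 2 in $\xi$. I would handle this by localizing $c$ in $\xi$ using the Littlewood--Paley-type cutoffs of Lemma~\ref{lem: L-W decomp} (whose tail is $O(h)$), combined with the uniform bound $\|\Lambda r_h\|\le C$ and the compact $\xi$-support of $m$, to reduce to an admissible zero-order test symbol.
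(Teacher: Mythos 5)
Your reduction of the problem to showing $\Im(\chi u_h^d,\Lambda r_h)\to 0$ is essentially correct, but the way you then dispose of that quantity is circular. The point is that $\Re\bigl(\op(c)r_h,r_h\bigr)_{L^2(x_d>0)}$ with $c=i\chi(x_d)\xi_d\langle\xi'\rangle$ does \emph{not} converge to $\Re\langle m,c\rangle=0$: on the half-space the operator $hD_d$ is not self-adjoint, and the integration-by-parts formula~\eqref{eq: integration parts functions} produces exactly the boundary contribution $\tfrac h2\,\chi(0)\,|\Lambda^{1/2}(r_h)_{|x_d=0}|^2$ when you compare $\op(c)$ with its formal adjoint. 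Concretely, using $u_h^d=-hD_dr_h+O(h)$ and~\eqref{eq: integration parts functions} one finds
\[
\bigl(\chi\Lambda u_h^d,r_h\bigr)-\bigl(\chi\Lambda r_h,u_h^d\bigr)=-ih\bigl(\Lambda\chi (r_h)_{|x_d=0},(r_h)_{|x_d=0}\bigr)_\pd+O(h),
\]
so $2\Im(\chi\Lambda u_h^d,r_h)$ \emph{is} (up to sign and $O(h)$) the trace term you are trying to bound. Only the symmetrized quadratic forms converge to pairings with the measure (this is the content of Proposition~\ref{prop: limit tangential}, whose proof in turn relies on the present lemma); the antisymmetric part is precisely the unknown trace. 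Saying ``$c$ is purely imaginary and $m$ is real, hence the real part of the limit vanishes'' therefore assumes the conclusion. The obstacle you flag at the end (the order of the test symbol) is not the real difficulty.

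A further symptom of the gap: your argument never uses the boundary condition $(u_h^d)_{|x_d=0}=0$, whereas the statement is false without it (data concentrating at glancing frequencies on the boundary would violate it). The paper's proof is necessarily microlocal: it splits $T^*\pd\Omega$ into hyperbolic, glancing and elliptic regions, diagonalizes the $2\times2$ system~\eqref{eq: syst semi-class 2 by 2 bis} via $z_k=\vv\pm\ops(\lambda)\rr$ in the hyperbolic and elliptic zones (where first-order scalar equations, Lemma~\ref{lem: first order hyperbolic} and a G\aa rding argument control the traces), and in the glancing zone uses the boundary condition $(\uh)_{|x_d=0}=0$ together with integration over slabs $0\le x_d\le\eps^{-1/2}h$ to get $\limsup_h h|(\rh)_{|x_d=0}|^2\lesssim\eps^{1/2}$, letting $\eps\to0$ at the end. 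Some input of this kind is unavoidable; the short self-adjointness argument cannot replace it.
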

With these lemmas we can prove Proposition~\ref{Prop: measure support and form}. Let us write 
System~\eqref{eq: syst semi-class} in geodesic coordinates. We have 
 \begin{equation}
 	\label{eq: syst semi-class-loc}
 \begin{cases}
  &\underline{u}_h-ih\nabla_g \underline{r}_h=hf_h -ih (r_h)_{|x_d=0}\otimes \delta_{x_d=0},\\
& -ih\div_g \underline{u}_h+\underline{r}_h=h q_h.
 \end{cases}
 \end{equation}
Following the proof given above, in $I_1(h)$ (see \eqref{def: I1 measure description}), we have two extra terms to control, 
\[(\ops(a) (h(r_h)_{|x_d=0}\otimes \delta_{x_d=0}), \underline{u}_h^j)  \text{ and}\]  
\[(\ops(a)( \underline{u}_h^j ), h(r_h)_{|x_d=0}\otimes \delta_{x_d=0})=  
( \underline{u}_h^j , \ops(a)^*(h(r_h)_{|x_d=0}\otimes \delta_{x_d=0})) .\]
 For $I_2(h) $ (see \eqref{def: I2 measure description}) we have to control same type of terms where 
 $\underline{u}_h^j$ is replaced by $ \underline{r}_h$. The term $I_3(h)$ (see \eqref{def: I3 measure description})
 does not have an extra boundary term.
As all these terms write analogous, we only treat  one term. We apply  Lemma~\ref{lem: est tensor pd}, we have
\[
| (\ops(a) (h(r_h)_{|x_d=0}\otimes \delta_{x_d=0}), \underline{u}_h^j)    |
\lesssim h^{1/2} \|  \underline{u}_h^j \| |(r_h)_{|x_d=0}|  \to 0 \text{ as }h,
\]
from Lemma~\ref{lem: first est trace}.
\begin{proof}[Proof of Lemma \ref{lem: est tensor pd}]
 We can assume that $a(x,\xi)=a_1(x',\xi')\chi(x_d)\ell(\xi_d)$, where $a_1$, $\chi$ and $\ell$ are compactly supported,
as all symbols can be approximated by a sum of such 
 symbols. From definition of operators associated with a symbol, we have 
 \[
 \ops(a)(w\otimes \delta_{x_d=0})=\chi ( \ops(a_1)w)\otimes \ops(\ell)\delta_{x_d=0}.
 \]
 A direct computation in Fourier variables gives 
 $\| \ops(\ell) \delta_{x_d=0}\|_{L^2(\R)}\lesssim h^{-1/2}$, and  as $a_1\in S^{-N}$ for all $N\in \N$,
$|  \ops(a_1)w|\lesssim  | w|_{L^2(\R^{d-1})} $, which implies the lemma.
 \end{proof}
\begin{proof}[Proof of Lemma~\ref{lem: first est trace}]
We work in a neighborhood of a boundary point in geodesic coordinates, we can write $x_0=(x_0',0)$.
We use the notation $\tilde g$ defined in Section~\ref{sec: Riem tools}. The system written in interior is given by
\begin{equation}
 	\label{eq: syst semi-class inter}
 \begin{cases}
  &u_h^k+\ops(\tilde g^{jk}\xi_j)r_h=h\tilde f_h^k , \text{ for } k=1,..., d-1, \\ 
  &u_h^d+ hD_dr_h=h\tilde f_h^d  \\  
&hD_d u_h^d+ \ds \sum_{j=1}^{d-1} \ops(\xi_j )u_h^j+r_h=h\tilde q_h,    
 \end{cases}
 \end{equation}
 where $\tilde f_h$ and $\tilde q_h$ are bounded in $L^2$.
Indeed System~\eqref{eq: syst semi-class inter} comes from System~\eqref{fluidec-3} and pseudo-differential calculus.
From \eqref{for: grad} and \eqref{for: div},  we have 
$-ih\nabla_{\tilde g}r_h= \ops(\tilde g^{jk})r_h+O(h)$, $hbu_h=O(h)$, and
$-hi\div_g u_h= hD_d u_h^d+ \ds \sum_{j=1}^{d-1}\ops(\xi_j)u_h^j +O(h)$ where we have  denoted by $O(h)$  
terms bounded by $Ch$ in $L^2$-norm.

Using the first equations, we can replace $u_h^j$ in the last equation of \eqref{eq: syst semi-class inter}. We then
obtain the 2 by 2 system
 \begin{equation}
 	\label{eq: syst semi-class 2 by 2}
 \begin{cases}
  &u_h^d+ hD_dr_h=h\tilde f_h^d \\
& hD_du_h^d - \ds \sum_{j=1}^{d-1} \ops(\xi_j )\ops(\tilde g^{jk}\xi_k)r_h+r_h
=h q_h + h \ds \sum_{j=1}^{d-1} \ops(\xi_j )\tilde f_h^j.
 \end{cases}
 \end{equation}
From symbolic calculus, $\sum_{j=1}^{d-1} \ops(\xi_j )\ops(\tilde g^{jk}\xi_k)=\ops(R(x,\xi'))+h\ops(r_1)$  where $r_1$
is of order 1, and $R(x,\xi')=\tilde g^{jk}\xi_k\xi_j $.
System \eqref{eq: syst semi-class 2 by 2} can be recast as 
 \begin{equation}
 	\label{eq: syst semi-class 2 by 2 bis}
 \begin{cases}
  &u_h^d+ \ops(\xi_d)r_h=h\tilde f_h^d \\
& \ops (\xi_d) u_h^d -\ops(R(x,\xi')-1)r_h
=hG,
 \end{cases}
 \end{equation}
 where $G= q_h +  \ds \sum_{j=1}^{d-1} \ops(\xi_j )\tilde f_h^j+\ops(r_1)r_h$.
\def\rh{w_h}
\def\uh{v_h}
\def\tch{\tilde \chi}

We define three types of points, hyperbolic points, the set where $R(x,\xi')-1<0$, glancing points, the set where 
$R(x,\xi')-1=0$ and elliptic points where $R(x,\xi')-1>0$. Observe that in both cases hyperbolic and glancing, $\xi'$
is bounded. 
In what follows we made some computations useful for hyperbolic or glancing cases.

Let $\chi(x,\xi')$ be compactly supported on a neighborhood of $(x_0, \xi'_0)$. In particular 
$\chi \in S^{-N}$ for all $N\in \N$.
We denote by $\rh=\ops(\chi) r_h$ and $ \uh=\ops(\chi) u_h^d$, in particular $(\uh)_{|x_d=0}=0$ from boundary conditions. From symbolic calculus we have
$hD_d\ops(\chi) z=\ops(\chi)hD_d z+ A$ where $\| A\|\lesssim h\| z\|$. We also have
\[
\ops(\chi) \ops(R(x,\xi')-1) = \ops(R(x,\xi')-1) \ops(\chi)+ [\ops(\chi), \ops(R(x,\xi')-1)],
\]
 and the symbol of $ [\ops(\chi), \ops(R(x,\xi')-1)]$ is in $hS^{-N}$ for all $N\in \N$.
 We then have 
 \[\|  [\ops(\chi), \ops(R(x,\xi')-1)] z  \|\lesssim h\| z\|.\]
Let $\tch$ compactly supported and $\tch=1$ on the support of $\chi$.
The symbol of operator defined by $ \ops(R(x,\xi')-1) \ops(\chi)$ is $(R(x,\xi')-1) \chi +ha_0$ with 
$a_0 \in hS^{-N}$ for all $N\in \N$.
And the symbol of   $  \ops(\tch^2(R(x,\xi')-1)) \ops(\chi)$ is $ (R(x,\xi')-1) \chi+ha_1 $ with $a_1 \in hS^{-N}$ 
for all $N\in \N$. Then we have 
\[
\|  ( \ops(R(x,\xi')-1) \ops(\chi) - \ops(\tch^2(R(x,\xi')-1)) \ops(\chi))z  \|\lesssim h\| z\|.
\]
 Applying $\ops(\chi)$ to System~\eqref{eq: syst semi-class 2 by 2 bis}, using the previous analysis, we obtain the following system:
 \def\fh{h\tilde f_h}
 \def\Gh{h\tilde  G_h}
 \begin{equation}
 	\label{eq: syst semi-class 2 by 2 hyp-gl}
 \begin{cases}
  &hD_d \rh +\uh =\fh\\
& hD_d\uh -\ops(\tch^2(R(x,\xi')-1))\rh
=\Gh  \\
&(\uh)_{|x_d=0}=0,
 \end{cases}
 \end{equation}
 where $\| \tilde G_h\|\lesssim \|q_h  \|+ \ds \sum_{j=1}^{d-1}\| \tilde f_h^j\|+\|r_h\|+\| u_h^d  \|$.
%
%
%
 \subsubsection{Hyperbolic region}
 
 We treat now the hyperbolic points
 \[{\mathcal H }=\{(x,\xi')\in\R^d\times \R^{d-1}, R(x,\xi')-1<0  \}. 
 \]
 Let $\chi $ be 
 compactly supported in ${\mathcal H }$ and $\tch$ as defined above.
 Let $\lambda(x,\xi')=\tch(x,\xi')\sqrt{1-R(x,\xi')}$. From symbolic calculus, we have 
 $\ops(\lambda^2)\op(\chi)=-\ops(\tch^2(R-1))+h\ops(a_0)$ where $a_0$ is of order 0.
System~\eqref{eq: syst semi-class 2 by 2 hyp-gl} writes 
  \begin{equation}
 	\label{eq: syst semi-class 2 by 2 hyp}
 \begin{cases}
  &hD_d \rh +\uh =\fh\\
& hD_d\uh +\ops(\lambda^2)\rh
=\Gh   \\
&(\uh)_{|x_d=0}=0,
 \end{cases}
 \end{equation}
 where $ ( \tilde f_h ,\tilde G_h)$ has been modified but are still bounded in $L^2$ uniformly with respect $h$.
 In what follows we keep alway $ ( \tilde f_h ,\tilde G_h)$ for $L^2$ bounded terms.
 
 Let $z_k=\uh+(-1)^k\ops(\lambda)\rh$ for $k=1,2$. We have from~\eqref{eq: syst semi-class 2 by 2 hyp}
 \begin{align*}
 hD_d z_k&= -\ops(\lambda^2)\rh   -(-1)^k  \ops(\lambda)      \uh +\Gh   
   +(-1)^k  \ops(\lambda)  \fh +h\ops(r_0)\rh\\
   &=-(-1)^k\ops(\lambda)  z_k +\Gh   
   +(-1)^k  \ops(\lambda)  \fh +h\ops(r_0')\rh,
 \end{align*}
 where $r_0$ and $ r_0'$ are of order 0 and come from $[hD_d, \ops(\lambda) ]$ and formula 
 $\ops(\lambda) \ops(\lambda) = \ops(\lambda^2)+h\ops(r_0'')$ where $r_0''$ is of order 0.
 Then we obtain
 $ hD_d z_k+(-1)^k\ops(\lambda)  z_k =hH_k$ where $H_k$ bounded in $L^2$  uniformly with respect $h$.
 To obtain an estimation for $z_k$ we apply the following lemma.
 \begin{lem}
 	\label{lem: first order hyperbolic}
Let $a\in S^1$. We assume $a$ supported on a \nhd of $x_d=0$. There exists $C>0$, such that for all  $w$ satisfying 
$hD_dw+\ops (a)w=hf$, we have $|w_{|x_d=0}| \le C(\| f\|+\|w\|)$.
 \end{lem}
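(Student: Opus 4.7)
\textbf{Proof plan for Lemma~\ref{lem: first order hyperbolic}.} The strategy is a standard ``energy identity in $x_d$'' for a first-order semiclassical ODE, converting the boundary trace into a volume quantity via a cutoff.

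Fix a smooth cutoff $\chi(x_d)$ with $\chi(0)=1$, $\chi\ge 0$, and compactly supported near $0$, chosen so that the support of $a$ is contained in $\{\chi=1\}$. Since $\chi$ vanishes at infinity, the fundamental theorem of calculus yields
\begin{equation*}
|w_{|x_d=0}|^2 \;=\; -\int_0^{+\infty} \partial_d\!\bigl(\chi(x_d)\,|w(\cdot,x_d)|^2\bigr)\,dx_d
\;=\; -\int_0^{+\infty}\!\!\chi'(x_d)\,|w|^2\,dx_d \;-\; \int_0^{+\infty}\!\!\chi(x_d)\,\partial_d |w|^2\,dx_d,
\end{equation*}
where $|\cdot|$ denotes the $L^2(\R^{d-1}_{x'})$-norm, that is, $|w(\cdot,x_d)|^2=(w(\cdot,x_d),w(\cdot,x_d))_{L^2(\R^{d-1})}$. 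The first integral is immediately bounded by $\|\chi'\|_\infty\|w\|^2$.

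The heart of the proof is to estimate $\partial_d|w|^2$. From $hD_dw=-ih\partial_dw$ and the equation $hD_d w + \ops(a)w = hf$ one has $\partial_d w = if - (i/h)\ops(a)w$, whence
\begin{equation*}
\partial_d |w|^2 \;=\; 2\Re(\partial_d w, w)_{L^2(\R^{d-1})} \;=\; -2\Im(f,w) \;+\; \frac{2}{h}\Im\bigl(\ops(a)w,w\bigr).
\end{equation*}
The apparent $1/h$ blow-up is the main obstacle. It is resolved by the semiclassical adjoint formula: since $a\in S^1$ is real in the applications (we use it with $a=\pm\lambda$ real), the composition/adjoint calculus of Section~\ref{Sec: Tools} gives $\ops(a)-\ops(a)^*=h\ops(r_0)$ for some $r_0\in S^0$, hence
\(
\Im(\ops(a)w,w)=\tfrac{1}{2i}\bigl((\ops(a)-\ops(a)^*)w,w\bigr)=O(h)\,|w|^2
\)
by the $L^2$-boundedness~\eqref{est: norm L2}. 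Substituting in and using $2|\Im(f,w)|\le |f|^2+|w|^2$ yields the pointwise-in-$x_d$ bound
\begin{equation*}
|\partial_d |w|^2| \;\le\; |f|^2 \;+\; C|w|^2.
\end{equation*}

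Reinserting this estimate into the trace identity and integrating in $x_d$ against $\chi$ gives
\(
|w_{|x_d=0}|^2 \le C'\bigl(\|w\|_{L^2(\R^d)}^2 + \|f\|_{L^2(\R^d)}^2\bigr),
\)
which, after taking square roots, is the claimed inequality. The only non-routine ingredient is the cancellation in step three, where the self-adjointness of $\ops(a)$ up to $O(h)$ (a direct consequence of the symbolic adjoint calculus stated in Section~\ref{Sec: Tools}) kills the spurious $1/h$ factor; every other step is Cauchy--Schwarz and the Leibniz rule applied to the cutoff $\chi$.
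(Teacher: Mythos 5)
Your proof is correct and is essentially the same argument as the paper's: both hinge on the identity $2\Im(\ops(a)w,w)=O(h)\,|w|^2$ for real $a$ (via $\ops(a)^*=\ops(a)+h\ops(r_0)$), which cancels the $1/h$ and leaves the boundary trace as the only surviving term; the paper performs one global integration by parts of $hD_d$ over $x_d>0$ while you recover the trace through $\partial_{x_d}|w|^2$ and a cutoff, a purely presentational difference. You are also right to flag that the realness of $a$ is essential (the paper invokes it explicitly in its proof even though the lemma's statement omits it).
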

 \begin{proof}
In one hand, we have 
\begin{equation}
	\label{est: champ reel}
	|\Im ( hf , w )|\le h\| f\|\| w\|. 
\end{equation}
 In other hand, we have
\begin{align*}
2i\Im(hD_d+\ops (a)w, w)=(hD_dw, w)-(w, hD_dw)+(\ops(a)w,w)-(w,\ops(a) w).
\end{align*}
We now apply integration by parts formula~\eqref{eq: integration parts functions} and as $a$ is real valued, 
$\ops(a)^*=\ops(a)+h\ops(r_0)$ where the symbol $r_0$ is of order 0, we obtain
\begin{align*}
2i\Im(hD_d+\ops (a)w, w)=ih|w_{|x_d=0}|^2+h(\ops(r_0) w,w),
\end{align*}
and from~\eqref{est: champ reel} above, we deduce Lemma~\ref{lem: first order hyperbolic}.
 \end{proof}
From Lemma~\ref{lem: first order hyperbolic}, applied to $z_k$ we obtain,
\begin{equation}
|(z_k)_{|x_d=0}|\lesssim \|z_k\|+ \|H_k\|
\end{equation}
From boundary condition, $(\uh)_{|x_d=0}=0$, and as $ \|z_k\|\lesssim \|\uh \| +\| \rh \| $ then we obtain
\begin{equation}
|(\ops(\lambda)\rh)_{|x_d=0}|\lesssim  \|\uh \| +\| \rh \|+ \|H_k\|.
\end{equation}
Let $\chi_2$ be compactly supported in ${\mathcal H }$ and such that $\chi_2\tch=\tch$.
We have by symbolic caculus
\[
\ops(\chi_2(1-R(x,\xi'))^{-1/2})\ops(\lambda)=\ops (\tch)+h\ops(a_0),
\]
 where $a_0\in S^0$.
 We can deduce 
 \[
 |(\ops(\tch)\rh)_{|x_d=0}|\lesssim  \|\uh \| +\| \rh \|+ \|H_k\|+h|( \rh)_{|x_d=0}|.
 \]
As   $\tch=1$ on the support of $\chi$, we have 
\(
\ops(\tch)\rh=\rh+h\ops(a_0)r_h,
\)
where $a_0\in S^0$.
We thus  obtain  
\begin{equation}
\label{est: hyperbolic final}
|(\rh)_{|x_d=0}|\lesssim   \|\uh \| +\| \rh \|+ \|H_k\|+h|( r_h)_{|x_d=0}|.
\end{equation}
The estimate above is better in hyperbolic region than the statement of Lemma~\ref{lem: first est trace}.
Observe that in hyperbolic region we do not need boundary condition as by linear combination between $z_1$
and $z_2$ we can estimate both $( \rh)_{|x_d=0}$ and $( \uh)_{|x_d=0}$.
%
%
%

\subsubsection{Glancing region}

We treat now the glancing region. Let $\eps>0$ we define 
\[  {\mathcal G }_\eps=\{(x,\xi')\in\R^d\times \R^{d-1}, |R(x,\xi')-1|<\eps  \}.\]
Let $\chi$ compactly supported in $ {\mathcal G }_\eps$. We keep same notation as in hyperbolic region, i.e.
$\rh=\ops(\chi) r_h$ and $ \uh=\ops(\chi) u_h^d$ satisfying System~\eqref{eq: syst semi-class 2 by 2 hyp-gl}.
We have for $0\le x_d\le \eps^{-1/2}h$,
\begin{align*}
|\uh(x',x_d)|^2\le \left(  \int_0^{\eps^{-1/2}h} |  \pd_{x_d}\uh(x',\sigma)|d\sigma\right)^2.
\end{align*}
Integrating with respect $x_d$, we obtain
\begin{align*}
 \int_0^{\eps^{-1/2}h}|\uh(x',x_d)|^2dx_d\le  \eps^{-1/2}h\left(  \int_0^{\eps^{-1/2}h} |  \pd_{x_d}\uh(x',\sigma)|d\sigma\right)^2
\le \eps^{-1}h^2 \int_0^{\eps^{-1/2}h} |  \pd_{x_d}\uh(x',\sigma)|^2d\sigma.
\end{align*}
From System~\eqref{eq: syst semi-class 2 by 2 hyp-gl} and with an integration with respect $x'$ 
we obtain
\begin{align}
	\label{eq: est speed glancing}
\int_{\R^{d-1}} \int_0^{\eps^{-1/2}h}|\uh(x',x_d)|^2dx_d dx' \le   \eps^{-1}\| \ops(R-1) \rh \|^2 +h^2\|  \Gh \|^2.
\end{align}
From $\rh(x',0)=\rh(x',x_d)-\int_0^{x_d}\pd_{x_d}\rh(x',\sigma)d\sigma$, we have for $x_d\in(0,\eps^{-1/2}h)$
\begin{align*}
|\rh(x',0)|^2\lesssim |\rh(x',x_d)|^2+ \left(\int_0^{\eps^{-1/2}h}|\pd_{x_d}\rh(x',\sigma)|d\sigma\right)^2.
\end{align*}
With an integration with respect $x_d$ and Cauchy-Schwarz inequality, we obtain
\begin{align*}
\eps^{-1/2}h|\rh(x',0) |^2\lesssim \int_0^{\eps^{-1/2}h}|\rh(x',x_d)|^2dx_d
+\eps^{-1} \int_0^{\eps^{-1/2}h}|h\pd_{x_d}\rh(x',x_d)|^2dx_d.
\end{align*}
With an integration with respect $x'$ and from System~\eqref{eq: syst semi-class 2 by 2 hyp-gl}, this yields
\begin{align*}
\eps^{-1/2}h|(\rh)_{|x_d=0}|^2\lesssim \|\rh \|^2+\eps^{-1}   \int_{\R^{d-1}} \int_0^{\eps^{-1/2}h} |\uh (x',x_d)|^2dx_d dx' 
+\eps^{-1} h^2\|\tilde  f_h^d\|^2.
\end{align*}
Applying~\eqref{eq: est speed glancing}, we have
\begin{align}
	\label{est: glancing end}
\eps^{-1/2}h|(\rh)_{|x_d=0}|^2\lesssim \|\rh \|^2+\eps^{-2} \| \ops(R-1) \rh \|^2+\eps^{-1} h^2\| \tilde G_h^d\|^2
            +\eps^{-1} h^2\| \tilde f_h ^d\|^2.
\end{align}
By symbolic calculus, $ \ops(R-1) \rh=\ops((R-1)\chi)r_h+ h\ops(a_0) r_h$ where $a_0\in S^0$. As by assumption
$|(R-1)\chi|\le \eps$, for all $\eps>0$, from~\eqref{est: norm L2} there exists $C_\eps>0$ such that
 \[
 \| \ops(R-1) \rh \|^2\le 4 \eps^2\|r_h \|^2 +C_\eps h^2 \|r_h \|^2.
 \] 
Then from~\eqref{est: glancing end} we obtain
\begin{equation}
	\label{est: glancing final}
\limsup_{h\to 0}h|(\rh)_{|x_d=0}|^2\lesssim \eps^{1/2}.
\end{equation}

%
%
%
\subsubsection{Elliptic region}

The last zone to consider is the elliptic region. Let ${\mathcal E }=\{(x,\xi')\in\R^d\times \R^{d-1}, R(x,\xi')-1>0  \}$.
As ${\mathcal E }$ is not compact in $\R^d\times \R^{d-1}$ we have to treat the large $|\xi'|$.
Let $\chi$ be a smooth function from $\R^d\times \R^{d-1}$ to $\R$, satisfying, $\chi(x,\xi')=1 $ for $(x,\xi')$ such that
$R(x,\xi')\ge 1+\eps$ and $|x|\le \delta$,  $\chi(x,\xi')=0$ for $|x|\ge 2\delta$. We assume a 0-homogeneity condition
that is  $\chi(x,\lambda\xi')= \chi(x,\xi')$ for $(x,\xi')$ such that $R(x,\xi')\ge 1+\eps$ and $\lambda\ge 1$. The 
arguments below are close to the one used in hyperbolic region but we take care to large $|\xi'|$.

We denote by $\rh=\ops(\chi) r_h$ and $ \uh=\ops(\chi) u_h^d$. From symbolic calculus we have
$hD_d\ops(\chi) z=\ops(\chi)hD_d z+ A$ where $\| A\|\lesssim h\| z\|$. Indeed, the symbol of $[hD_d,\ops(\chi)]$ is
$hD_{d} \chi\in hS^0$.  

We also have
\[\ops(\chi) \ops(R(x,\xi')-1) = \ops(R(x,\xi')-1) \ops(\chi)+ [\ops(\chi), \ops(R(x,\xi')-1)],\]
 and the symbol of $ [\ops(\chi), \ops(R(x,\xi')-1)]$ is in $hS^1$.
 
We then have 
 $\|  [\ops(\chi), \ops(R(x,\xi')-1)] z  \|\lesssim h\| \Lambda^1 z\|$. 
Let $\tch$  be a smooth function, supported on   ${\mathcal E } $ and $\tch=1$ on the support of $\chi$.
We assume  $\tch(x,\xi')=1 $ for $(x,\xi')$ such that $R(x,\xi')\ge 1+\eps/2$ and $|x|\le 2\delta$,  $\tch(x,\xi')=0$ 
for $|x|\ge 3\delta$ and   a 0-homogeneity condition
that is  $\tch(x,\lambda\xi')= \chi(x,\xi')$ for $(x,\xi')$ such that $R(x,\xi')\ge 1+\eps/2$ and $\lambda\ge 1$. We 
then have $\chi\tch=\chi$.

The symbol of $ \ops(R(x,\xi')-1) \ops(\chi)$ is $(R(x,\xi')-1) \chi+ha_1$  
where  $a_1$ is in $S^1$. 
The symbol of $\ops(\tch(R(x,\xi')-1) )\ops(\chi)$ is $(R(x,\xi')-1) \chi+h\tilde a_1$ where  $\tilde a_1$ is in $S^1$. 
We can then replace $\ops(R(x,\xi')-1) \ops(\chi)$ by $\ops(\tch^2(R(x,\xi')-1) )\ops(\chi)$ 
  modulo a term $ h a_1$ where  $ a_1$ is in $S^1$.
  
 Applying $\ops(\chi)$ to System~\eqref{eq: syst semi-class 2 by 2 bis}, using the previous analysis, 
 we obtain the following:
 \begin{equation}
 	\label{eq: syst semi-class 2 by 2 elliptic}
 \begin{cases}
  &hD_d \rh +\uh =\fh\\
& hD_d\uh -\ops(\tch^2(R(x,\xi')-1))\rh
=\Gh \\
&(\uh)_{|x_d=0}=0,  
 \end{cases}
 \end{equation}
 where 
$ \|\Lambda^{-1} \tilde  G_h \|\lesssim 
\| \Lambda^{-1} G\|
+\| r_h \|  +\|  u^d_h\|$ with 
 notation used in \eqref{eq: syst semi-class 2 by 2 bis}.

Let $\lambda(x,\xi')=\tch(x,\xi')\sqrt{R(x,\xi')-1}$. The symbol of $\ops((1-\tch^2)(R-1))\ops(\chi)$ is in $h^NS^{-N}_{1,0}$
for all $N>0$ as $(1-\tch^2)\chi=0$ the asymptotic expansion of the symbol is null.
 We may write system~\eqref{eq: syst semi-class 2 by 2 elliptic} 
  \begin{equation}
 	\label{eq: syst semi-class 2 by 2 elliptic-2}
 \begin{cases}
  &hD_d \rh +\uh =\fh\\
& hD_d\uh -\ops(\lambda^2)\rh
=\Gh\\
&(\uh)_{|x_d=0}=0,  
 \end{cases}
 \end{equation}
 where  $\tilde G_h$ satisfy $ \|\Lambda^{-1} \tilde  G_h \|\lesssim 
\| \Lambda^{-1} G\|
+\| r_h \|  +\|  u^d_h\|$ estimations as above.

We set  $z_k=\uh+(-1)^ki\ops(\lambda)\rh$ for $k=1,2$. We have
 \begin{align*}
 hD_d z_k&= hD_d    \uh+(-1)^ki hD_d  \ops(\lambda)\rh\\
 &=\ops(\lambda^2)\rh+\Gh+(-1)^ki   \ops(\lambda)   hD_d  \rh  +h\ops(a_1)  \rh \\
   &= (-1)^{k+1}i  (  \ops(\lambda)  \uh + (-1)^k i\ops(\lambda^2)\rh)+\Gh+(-1)^ki   \ops(\lambda)  \fh +h\ops(a_1)  \rh \\
  &= (-1)^{k+1}i   \ops(\lambda) (    \uh + (-1)^k i\ops(\lambda)\rh)+\Gh+(-1)^ki   \ops(\lambda)  \fh +h\ops(a_1)  \rh ,
 \end{align*}
 as the symbol of $[D_d,\ops(\lambda)]$ is in $S^1$ and $\ops(\lambda)^2=\ops(\lambda^2)$ modulo an operator 
 with symbol in $hS^1$. In previous formula $a_1$ is a symbol in $S^1$, changing line from line eventually.
 \def\truc{H_h}
 Then $z_k$ satisfies the following equation
 \begin{equation}
 	\label{eq: elliptic eq zk}
  hD_d z_k+  (-1)^{k}i   \ops(\lambda)  z_k =\Gh+(-1)^ki   \ops(\lambda)  \fh +h\ops(a_1)  \rh= h\truc,
 \end{equation}
 where we have $\| \Lambda^{-1} \truc \|  \lesssim  \| \tilde f_h \|+\| \Lambda^{-1}  G\|+\| r_h \|  +\| u^d_h\|$.

 We set
 \begin{align*}
  I=\Re ( \Lambda^{-1}( hD_d z_k+  (-1)^{k}i   \ops(\lambda)  z_k ), \Lambda^{-1} (-1)^ki\ops(\lambda) z_k),
 \end{align*}
and in one hand we have
\begin{align}
\label{eq:elliptic 0}
  |I|\lesssim h \|  \Lambda^{-1} \truc \|  \|z_k \| ,
\end{align}
and in other hand 
\begin{align}
\label{eq: elliptic 1}
2I&=2\|  \Lambda^{-1} \ops(\lambda)  z_k \|^2\\
&\quad+\Re (hD_d  \Lambda^{-1} z_k, \Lambda^{-1} (-1)^ki\ops(\lambda) z_k)
+( \Lambda^{-1} (-1)^ki\ops(\lambda) z_k  , hD_d  \Lambda^{-1} z_k )\notag \\
&=I_1+\Re I_2+\Re I_3.\notag
\end{align}
To compute $I_2+I_3$ we integrate by parts applying~\eqref{eq: integration parts functions} 
and by symbolic calculus we have 
$\ops(\lambda)^*=\ops(\lambda)+h\ops(a_0)$,  the following commutator results, 
$[\Lambda^{-1},\ops(\lambda)]=h\ops(a_{-1})$, $[hD_d,\Lambda^{-1}]=0$ and 
$[hD_d,\ops(\lambda)]=h\ops(a_{1})$where $a_j\in S^j_{1,0}$. Then we have
\begin{align*}
&( \Lambda^{-1} (-1)^ki\ops(\lambda) z_k  , hD_d  \Lambda^{-1} z_k )\\
&= ( hD_d  \Lambda^{-1} (-1)^ki\ops(\lambda) z_k  , \Lambda^{-1} z_k )
-ih( \Lambda^{-1} (-1)^ki\ops(\lambda) (z_k)_{|x_d=0}   ,  \Lambda^{-1}( z_k )_{|x_d=0} )_\pd,
\end{align*}
 and
 \begin{align*}
 ( hD_d  \Lambda^{-1} (-1)^ki\ops(\lambda) z_k  , \Lambda^{-1} z_k )
 &= ( \Lambda^{-1} (-1)^ki\ops(\lambda) hD_d  z_k  , \Lambda^{-1} z_k )+h( \ops(a_0) z_k,  \Lambda^{-1} z_k)\\
 &=  ( \Lambda^{-1} (-1)^ki hD_d  z_k  , \ops(\lambda) ^*\Lambda^{-1} z_k )\\
&\quad+   ((-1)^ki[ \Lambda^{-1}, \ops(\lambda)] hD_d  z_k  , \Lambda^{-1} z_k )\\
&\quad +h( \ops(a_0) z_k,  \Lambda^{-1} z_k)\\
 &= ( \Lambda^{-1} (-1)^ki hD_d  z_k  , \ops(\lambda) \Lambda^{-1} z_k )\\
&\quad+  h (\Lambda^{-1} hD_d  z_k  , \ops(a_{-1}) z_k )
 +h( \ops(a_0) z_k,  \Lambda^{-1} z_k)\\
 &= -I_2+ h (\Lambda^{-1} hD_d  z_k  , \ops(a_{-1}) z_k )
 +h( \ops(a_0) z_k,  \Lambda^{-1} z_k).
  \end{align*}
 From the two previous computations we obtain
 \begin{align*}
 I_2+I_3&=-ih( \Lambda^{-1} (-1)^ki\ops(\lambda) (z_k)_{|x_d=0}   ,  \Lambda^{-1}( z_k )_{|x_d=0} )_\pd\\
&\quad +  h (\Lambda^{-1} hD_d  z_k  , \ops(a_{-1}) z_k )
 +h( \ops(a_0) z_k,  \Lambda^{-1} z_k).
 \end{align*}
 Then from previous computations and \eqref{eq: elliptic 1} we obtain
 \begin{align}
 \label{eq: elliptic 2}
 &2I=2\|  \Lambda^{-1} \ops(\lambda)  z_k \|^2
 +h\Re ( \Lambda^{-1} (-1)^k\ops(\lambda) (z_k)_{|x_d=0}   ,  \Lambda^{-1}( z_k )_{|x_d=0} )_\pd+ R\\
&\text{where }
 |R|\lesssim  h \|\Lambda^{-1} hD_d  z_k \| \|  \Lambda^{-1}z_k \|
 +h  \|  \Lambda^{-1}z_k \| \| z_k \| .\notag
 \end{align}
 From Equation~\eqref{eq: elliptic eq zk}  we have
 \[
  \|\Lambda^{-1} hD_d  z_k \| \lesssim  \|\Lambda^{-1} \truc \|+ \| z_k \|.
 \]
We obtain from the previous estimates, \eqref{eq: elliptic 1}, \eqref{eq: elliptic 2}, and 
using that $(z_1)_{|x_d=0}=-(z_2)_{|x_d=0}$,
\begin{align}
&\sum_{k=1,2} \| \Lambda^{-1} \ops(\lambda)  z_k \|^2
 + h\Re ( \Lambda^{-1}\ops(\lambda) (z_2)_{|x_d=0}   ,  \Lambda^{-1}( z_2)_{|x_d=0} )_\pd\\
 &\qquad \lesssim h
\sum_{k=1,2}  (\|  \Lambda^{-1}z_k\| \| z_k \| + \|  \Lambda^{-1}z_k\| \|\Lambda^{-1} \truc \|).  \notag
\end{align}
 As $\uh=(z_1+z_2)/2$ and $\ops(\lambda)\rh=(z_2-z_1)/2i$, we obtain 
 \begin{align}
 \label{est: elliptic 3}
 & \| \Lambda^{-1} \ops(\lambda)  \uh \|^2+ \| \Lambda^{-1} \ops(\lambda)^2  \rh \|^2
   +h\Re ( \Lambda^{-1}\ops(\lambda) (z_2)_{|x_d=0}   ,  \Lambda^{-1}( z_2)_{|x_d=0} )_\pd\\
   &\qquad \lesssim h
\sum_{k=1,2}  (\|  \Lambda^{-1}z_k\| \| z_k \| + \|  \Lambda^{-1}z_k\| \|\Lambda^{-1} \truc \|).  \notag\\
&\qquad \lesssim h  (\| z_k \|^2+\|\Lambda^{-1} \truc \|^2).\notag
 \end{align}
By G\aa rding inequality~\eqref{Garding ineq} there exist $C_0>0$ and $C_1>0$ such that 
\[
  \| \Lambda^{-1} \ops(\lambda)  \uh \|^2\ge C_0 \| \uh\|^2-C_1h\|u_h^d \|^2,
\]
as $\uh=\ops(\chi)u_h^d$, the  symbol of $ \ops(\lambda)^* \Lambda^{-2} \ops(\lambda) $ is 
$(1+|\xi'|)^{-2}(R(x,\xi')-1)+h a_0$ and $(1+|\xi'|)^{-2}(R(x,\xi')-1)\ge C>0$ on the support of $\chi$.

By analogous arguments, we have 
\[
  \| \Lambda^{-1} \ops(\lambda)^2  \rh \|^2\ge C_0\|\Lambda^1 \rh \|^2- C_1h\| \Lambda ^{1/2}r_h\|^2.
\]
 As $r_h=\rh+\ops(1-\chi)r_h$ and $\Lambda ^{s}\ops(1-\chi) $   is bounded on $L^2$ for all $s\in\R$ 
 as $\chi$ is compactly supported, 
 we have
 \[
  \| \Lambda^{-1} \ops(\lambda)^2  \rh \|^2\ge C'_0\|\Lambda^1 \rh \|^2- C_1h\| r_h\|^2,
\]
and
\[
\|z_k\|\lesssim \| u_h^d \|+\| \Lambda  r_h \|\lesssim \| u_h^d \|+\| \Lambda^1  \rh \|+\| r_h \|.
\]
For trace term in \eqref{est: elliptic 3}, as $(z_2)_{|x_d=0}=\ops(\lambda)\ops(\chi)r_h$, we can apply
G\aa rding inequality to obtain
\[
\Re ( \Lambda^{-1}\ops(\lambda) (z_2)_{|x_d=0}   ,  \Lambda^{-1}( z_2)_{|x_d=0} )_\pd
\ge C_0|\Lambda^{1/2} (\rh)_{|x_d=0}|^2-C_1h|(r_h)_{|x_d=0}|^2.
\]
Applying the previous estimates and \eqref{est: elliptic 3}, we obtain
\begin{align}
	\label{est: elliptic final}
\|\Lambda^1 \rh \|^2+ \| \uh\|^2 +h |\Lambda^{1/2} (\rh)_{|x_d=0}|^2 
\lesssim  h  (\| u_h^d \|^2+\|  r_h \|^2+\|\Lambda^{-1} \truc \|^2+h |(r_h)_{|x_d=0}|^2).
\end{align}
%
%
%
\subsubsection{End of proof}
 Taking three cutoffs on each region, hyperbolic, glancing and elliptic, respectively $\chi_H$, $\chi_G$ and $\chi_E$, 
 we obtain 
\begin{align*}
&h |\Lambda^{1/2} (r_h)_{|x_d=0}|^2\\
&\qquad \lesssim h |\Lambda^{1/2} (\ops(\chi_H)r_h)_{|x_d=0}|^2
+h |\Lambda^{1/2} (\ops(\chi_G)r_h)_{|x_d=0}|^2 +h |\Lambda^{1/2} (\ops(\chi_E)r_h)_{|x_d=0}|^2.
\end{align*}
From \eqref{est: hyperbolic final}, we have, as $\chi_H$ is compactly supported,
\[
|\Lambda^{1/2} (\ops(\chi_H)r_h)_{|x_d=0}|^2\lesssim C +h|(r_h)_{|x_d=0}|^2\lesssim  C 
+h|\Lambda^{1/2}(r_h)_{|x_d=0}|^2.
\]
From \eqref{est: elliptic final}, we have
\[
|\Lambda^{1/2}  (\ops(\chi_E)r_h)_{|x_d=0}|^2 \lesssim  C +h|\Lambda^{1/2}(r_h)_{|x_d=0}|^2,
\]
as $ \|\Lambda^{-1} \truc \|$ is bounded from \eqref{eq: elliptic eq zk}.

We  obtain
\[
h|\Lambda^{1/2} (r_h)_{|x_d=0}|^2\lesssim Ch
+h|\Lambda^{1/2}  (\ops(\chi_G)r_h)_{|x_d=0}|^2+h^2|\Lambda^{1/2}(r_h)_{|x_d=0}|^2,
\]
then for all $h$ sufficiently small we have
\[
h|\Lambda^{1/2} (r_h)_{|x_d=0}|^2\lesssim Ch
+h|\Lambda^{1/2}  (\ops(\chi_G)r_h)_{|x_d=0}|^2.
\]
From  \eqref{est: glancing final} we have
\[
\limsup_{h\to 0}h|\Lambda^{1/2} (r_h)_{|x_d=0}|^2\lesssim \eps^{1/2},
\]
for all $\eps>0$, this proves Lemma~\ref{lem: first est trace}.
\end{proof}
 
 In the following we prove that the measure defined is not null and it is null on the set $\{x,\ b(x)>0\}$.

%
%

\subsection{The measure  is not identically null}

%
%
\begin{prop}
The  measure $m$ is not identically null.
\end{prop}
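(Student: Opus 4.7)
\emph{Proof plan.} The strategy is to test the defining relation~\eqref{def: measure} with a cut-off that captures the full normalization $\|\gamma_h\|^2 = 1$, thus forcing the right-hand side to be nonzero. Since Proposition~\ref{Prop: measure support and form} places all three measures on the compact characteristic set $\{(x,\xi) \in \bar\Omega \times \R^d : g^{jp}\xi_p\xi_j = 1\}$, it suffices to choose $\theta \in C_c^\infty(\R^d \times \R^d)$ with $\theta \equiv 1$ on a compact phase-space neighborhood of this set and apply~\eqref{def: measure} with the symbols $a_j^k = \theta\delta_j^k$, $b^\ell = 0$, $c = \theta$. The symbol $\theta\delta_j^k$ satisfies the required symmetry condition $g^{\ell j}a_j^k = g^{kj}a_j^\ell$ by symmetry of $g$.

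For the right-hand side, using $\mu_k^j = g^{jp}\xi_p\xi_k\,m$ from Proposition~\ref{Prop: measure support and form} and the identity $g^{kp}\xi_p\xi_k = 1$ on $\supp m$,
\[
\langle \mu_j^k,\theta\delta_k^j\rangle = \sum_k \langle g^{kp}\xi_p\xi_k\, m,\theta\rangle = \langle m,\theta\rangle = \langle m,1\rangle,
\]
and the $c$-term contributes another $\langle m,1\rangle$, giving a total of $2\langle m,1\rangle$. For the left-hand side, I would write
\[
(\ops(\theta)\gamma_h,\gamma_h)_H = \|\gamma_h\|^2 - ((I-\ops(\theta))\gamma_h,\gamma_h)_H,
\]
and invoke the Littlewood--Paley identity~\eqref{eq: semiclassical form}: $\gamma_h = \chi(ih\mathcal{A})\gamma_h + O_{L^2}(h)$ with $\chi(s) = \psi(B^{-1}s) - \psi(Bs)$ compactly supported and $\chi(0) = 0$. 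By functional calculus (Helffer--Sj\"ostrand), $\chi(ih\mathcal{A})$ is, modulo $O_{L^2}(h)$, a matrix pseudodifferential operator whose symbol lies in the compact phase-space annulus $\{|h\xi|_g \in [1/B,B]\}$. Taking $\theta \equiv 1$ on this annulus as well, the symbols of $\ops(1-\theta)$ and of $\chi(ih\mathcal{A})$ have disjoint supports, so semiclassical calculus yields $(I-\ops(\theta))\chi(ih\mathcal{A}) = O(h^\infty)$ in $\mathcal{L}(L^2)$. The error term therefore vanishes and the left-hand side tends to $\|\gamma_h\|^2 = 1$. Equating the two limits gives $2\langle m,1\rangle = 1$, hence $\langle m,1\rangle = 1/2 > 0$.

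The main technical point is justifying that $\chi(ih\mathcal{A})$, defined abstractly by selfadjoint functional calculus on $H$, is genuinely approximated by a compactly supported semiclassical pseudodifferential operator so that disjoint-support calculus is valid. The matrix nature of $\mathcal{A}$ and its large kernel (responsible for $\ker\mathcal{A}_d$) might appear to obstruct this, but precisely the vanishing $\chi(0) = 0$ kills the zero-eigenvalue branch of the principal symbol and isolates the two propagating branches $\pm h|\xi|_g$, restoring the desired pseudodifferential structure. One should also note that the boundary trace contributions arising from extending $u_h, r_h$ by zero outside $\Omega$ are negligible at this level by Lemma~\ref{lem: first est trace} and Lemma~\ref{lem: est tensor pd}, so the mass balance $\langle m,1\rangle = 1/2$ above is not spoiled by boundary effects.
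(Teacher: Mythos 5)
Your overall strategy (test the measure against a cutoff that is identically one on the characteristic set, so that the limit recovers the full $L^2$ mass) is the right idea and is close in spirit to what the paper does. However, the step you yourself flag as ``the main technical point'' is a genuine gap, and it is precisely the point the paper's argument is designed to avoid. The operator $\mathcal{A}$ acts on a bounded domain with the boundary condition $u\cdot n=0$; the function $\chi(ih\mathcal{A})$ is defined only by abstract spectral calculus, and it is \emph{not} justified (and near $\partial\Omega$ is simply not true in any naive sense) that it agrees modulo $O_{\mathcal L(L^2)}(h^\infty)$ with a semiclassical pseudodifferential operator on $\R^d$ whose symbol is supported in a compact annulus, so that the disjoint-support argument $(I-\ops(\theta))\chi(ih\mathcal{A})=O(h^\infty)$ could be invoked. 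A Helffer--Sj\"ostrand parametrix for $\chi(ih\mathcal{A})$ would have to cope with the boundary condition, with the matrix structure of the symbol, and with the fact that $\ops(\theta)$ acts on the zero-extensions $\underline{u}_h,\underline{r}_h$ rather than on $u_h,r_h$; none of this is supplied, and Lemmas~\ref{lem: est tensor pd} and~\ref{lem: first est trace} do not cover it. Note that the paper's own use of $\chi(ih\mathcal{A})$ (Lemma~\ref{lem: L-W decomp} and the weak-convergence lemma) goes through pure functional calculus exactly because the pseudodifferential identification is unavailable. There is also a small numerical slip: the normalization is $\|u_h\|+\|r_h\|=1$, not $\|\gamma_h\|_H^2=1$, so even if your scheme worked the limit of the left-hand side would be $\|u_h\|^2+\|r_h\|^2\to 1/2$, not $1$.

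The paper's route is more elementary and sidesteps the problem entirely. Pairing the first equation of \eqref{fluidec-3} with $u_h$ and integrating by parts gives $\|u_h\|^2-\|r_h\|^2=O(h)$, hence $\|r_h\|=1/2+O(h)$, and the equation also yields $\|h\nabla_g r_h\|\lesssim 1$. This a priori bound places $\underline{r}_h$ in $H^\alpha_{sc}$ for $\alpha<1/2$, so the purely frequency-based estimate \eqref{est: H s sc} shows $\|\ops(1-\phi_R)\underline{r}_h\|\lesssim R^{-\alpha}$; consequently $(\ops(\phi_R)\underline{r}_h,\underline{r}_h)\ge 1/4-O(h)-O(R^{-\alpha})$ is bounded below, and testing \eqref{def: measure} with $c=\phi_R$ alone (no $a_j^k$, no $b^\ell$) already forces $\langle m,\phi_R\rangle>0$. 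If you want to keep your version, you would need to replace the functional-calculus claim by concrete high-frequency bounds of this type (for $r_h$, and for $u_h$ via $u_h=ih\nabla_g r_h+O_{L^2}(h)$ together with the second equation), which is essentially reconstructing the paper's argument.
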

\begin{proof}
From \eqref{fluidec-3}, multiplying the first equation by $u_h$, integrating by parts (see \eqref{eq: integration parts boundary}) and taking account the boundary condition $(u^d_h)_{|x_d=0}=0$, we have
\[
O(h)=(-ih\nabla_g r_h+ u_h, u_h)_g=\| u_h\|^2_g+(r_h, -ih\div_gu_h)_g.
\]
From second equation of  \eqref{fluidec-3} we replace $ -ih\div_gu_h$ by $r_h$ and we obtain
\[
O(h)=\| u_h\|^2_g -\|r_h \|^2_g.
\]
As from \eqref{est: unif 2}, we have $\| u_h\|+\|r_h\|=1$, we have $\|r_h\|=1/2 +O(h)$ and  $\|u_h\|=1/2 +O(h)$.
From the first equation of  \eqref{fluidec-3}, we obtain $\| h\nabla_g r_h\|\lesssim 1$.

We prove now that $m$ is not null. Let $\phi$ be a smooth function such that $\phi(s) =1 $ for  $ |s|\le 1$ and
supported in $|s|\le 2$. For $R>0$, we set  $ \phi_R (\xi)=\phi(|\xi|/R)$.
For $\alpha\in (0,1/2)$ we have
\begin{equation}
	\label{est: H s sc}
\| \ops(1- \phi_R)\underline{r}_h\|\lesssim R^{-\alpha}\|\ops(\langle \xi\rangle^\alpha )\underline{r}_h\|
\lesssim R^{-\alpha}(\|r_h\|_{L^2(\Omega)}+\|h\nabla_g r_h\|_{L^2(\Omega)})\lesssim  R^{-\alpha}.
\end{equation}
(See for instance \cite[Lemma 4.3]{Cor-Rob}).

Then for $h$ sufficiently small and $R $ sufficiently large,
\[
(\ops( \phi_R)\underline{r}_h,  \underline{r}_h)\ge 1/4,
\]
and from the definition of $m$ (see \eqref{def: measure}), $m$ is not null. this gives the result.
\end{proof}

%
%

\subsection{The semiclassical measure null on support of damping}

In this section we prove the following proposition.
%
%
\begin{prop}
	\label{prop: bm null}
Let $m$ be the semiclassical measure defined  in Proposition~\ref{Prop: measure support and form},
we have $b(x)m=0$.
\end{prop}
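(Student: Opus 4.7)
The strategy mirrors the time-dependent dissipation identity $E'(t)=-\int_\Omega b|u|^2\,dx$: in the spectral formulation of System~\eqref{fluidec-3} I will show $\int_\Omega b|u_h|^2\,dx\to 0$, and then the positivity of $m$ will immediately force $bm=0$.

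To extract the damping integral, I take the $L^2_g$ inner product of the first equation of~\eqref{fluidec-3} with $u_h$. Integration by parts using $u_h\cdot n=0$ gives $(-ih\nablag r_h, u_h)_g = ih(r_h,\div_g u_h)_g$, and substituting $\div_g u_h$ from the second equation yields $(-ih\nablag r_h, u_h)_g = -\|r_h\|^2 - ih(r_h,q_h)_g$. The resulting identity is
\[
-\|r_h\|^2 - ih(r_h,q_h)_g + \|u_h\|^2 - ih(bu_h,u_h) = ih(f_h,u_h)_g.
\]
Since $\|u_h\|^2$, $\|r_h\|^2$ and $(bu_h,u_h)\in\R$ are real, taking the imaginary part gives
\[
\int_\Omega b|u_h|^2\,dx = -\Re(r_h,q_h)_g - \Re(f_h,u_h)_g \longrightarrow 0,
\]
using $\|f_h\|+\|q_h\|\to 0$ while $\|u_h\|+\|r_h\|$ remains bounded.

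Next I identify the limit with $\langle m,b\rangle$ by applying definition~\eqref{def: measure} with the test symbol $a_j^k(x)=b(x)\delta_j^k$ (and $b^\ell=c=0$). The required symmetry $g^{\ell j}a_j^k=g^{kj}a_j^\ell$ holds because $g$ is symmetric. As $a_j^k$ is independent of $\xi$, $\ops(a_j^k)$ is just multiplication by $b\delta_j^k$, so the left-hand side of~\eqref{def: measure} reduces to $\int_\Omega b|u_h|_g^2\,dx$, which tends to $0$ by the previous paragraph. The right-hand side is $\langle\mu_j^k,b\delta_k^j\rangle=\sum_j\langle\mu_j^j,b\rangle$, and by Proposition~\ref{Prop: measure support and form} the trace satisfies $\mu_j^j=g^{jp}\xi_p\xi_j\,m=m$ on $\supp(m)\subset\{g^{jp}\xi_p\xi_j=1\}$ (and vanishes where $m$ does). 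Hence $\langle m,b\rangle=0$.

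Finally, since $m$ is a non-negative Radon measure and $b\ge 0$, the identity $\int b\,dm=0$ forces $b=0$ $m$-a.e., which is exactly $bm=0$. The only technical subtlety is that $b$ is merely $L^\infty(\Omega)$: one justifies the symbolic pairing by approximating $b$ by non-negative $b_n\in C_c^\infty(\Omega)$ and passing to the limit via monotone convergence in $dm$. The main obstacle beyond this is just matching the metric and sign conventions in the integration by parts and the substitution from the second equation; once these are in place, the argument is simply the spectral analogue of the standard energy dissipation identity.
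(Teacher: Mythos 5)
Your proposal is correct and follows essentially the same route as the paper: take the imaginary part of the pairing of System~\eqref{fluidec-3} with $(u_h,r_h)$ to get $(bu_h,u_h)_g\to 0$, identify this limit with a measure pairing $\langle Q(\xi)m,b\rangle$ via Proposition~\ref{Prop: measure support and form} (the paper routes the boundary-adjacent case through Proposition~\ref{prop: limit tangential}, which for a multiplication operator is indeed immediate, as you note), and conclude from the non-negativity of $b$ and $m$ since $Q\gtrsim 1$ on $\supp m$. The only differences are cosmetic: you substitute the second equation into the boundary term rather than adding the pairing of the second equation with $r_h$, and your choice $a_j^k=b\delta_j^k$ gives the trace $\sum_j\mu_j^j=m$ directly.
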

\begin{proof}
From System~\eqref{fluidec-3} we have
\begin{align}
(ihf_h,u_h)_g+(ihq_h,r_h)_g
=(-ih\nabla_g r_h+ u_h -ihbu_h,u_h)_g+(-ih \div_g u_h+ r_h,r_h)_g.
\end{align}
Integrating by parts, using~\eqref{eq: integration parts boundary} and taking account 
the boundary condition $(u^d_h)_{|x_d=0}=0$,  we have
\begin{align} 
(-ih\nabla_g r_h  ,u_h)_g+(-ih \div_g u_h  ,  r_h)_g=2\Re (-ih\nabla_g r_h  ,u_h)_g.
\end{align}
From the two previous formula, we obtain
\begin{align}
\Im \big(
(ihf_h,u_h)_g+(ihq_h,r_h)_g
\big)=ih(bu_h,u_h)_g.
\end{align}
As the left hand side is a $o(h)$, we obtain $(bu_h,u_h)_g\to 0$ as $h\to 0$. 
Then from   Proposition~\ref{Prop: measure support and form} and Proposition~\ref{prop: limit tangential} below, we have
$\langle m \sum_{k=1}^d g^{kk}\xi_k^2 , b\rangle=0$.

As $b$ and $m$ are non negative,  $ \ds \sum_{k=1}^d g^{kk}\xi_k^2  \gtrsim  \ds \sum_{j,k=1}^d g^{kj}\xi_k\xi_j=1  $ on the support of $m$, 
we obtain the result.
\end{proof}
  
 %
 %
\section{Propagation of the support}
	\label{Sec: Propagation of the support}
The goal is to prove the following proposition.
\begin{prop}
	\label{prop: propagation support semiclassical measure}
We assume that 	 \(\omega\) satisfy GCC. 
Let  \(m\) the semiclassical
measure constructed from \((u_h,r_h)_h\) and satisfying \eqref{fluidec-3}, 
we have \(m=0\).
\end{prop}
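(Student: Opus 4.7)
The plan is to establish microlocal invariance of the semiclassical measure $m$ along the generalized bicharacteristic flow of the principal symbol $p(x,\xi)=g^{jk}(x)\xi_j\xi_k-1$, and then to combine this invariance with Proposition~\ref{prop: bm null} and the Geometric Control Condition to force $m\equiv 0$.

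First, I would derive an interior transport equation. Applying $\ops(a)$ to the system~\eqref{fluidec-3} for a symbol $a$ compactly supported in $T^*\Omega$ and pairing with $(u_h,r_h)^{T}$, one computes the leading commutator in powers of $h$, very much in the spirit of the algebraic manipulations carried out in the proof of Proposition~\ref{Prop: measure support and form}. The symbolic calculus recalled in Section~\ref{Sec: Tools} then yields, in the limit $h\to 0$, the transport identity $\langle m, H_p a\rangle=0$ for every such test symbol $a$. This gives invariance of $m$ along the Hamiltonian flow of $p$ strictly inside $\Omega$. The damping contributes only through the lower-order operator $ih{\mathcal B}{\mathcal B}^*$; its sole effect in the limit is the relation $bm=0$ already obtained in Proposition~\ref{prop: bm null}.

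Second, I would extend this invariance up to and across the boundary. Using the geodesic coordinates of Section~\ref{sec: Riem tools} and the splitting of the cotangent bundle near $\partial\Omega$ into hyperbolic, glancing and elliptic regions (as in the proof of Lemma~\ref{lem: first est trace}), the analysis proceeds region by region. In the elliptic region the estimate~\eqref{est: elliptic final} forces $m$ to vanish there. At hyperbolic points the classical reflection law is recovered by decomposing the trace of the normal component of $u_h$ into incoming and outgoing modes $z_\pm=v_h\pm i\ops(\lambda)w_h$ and using the boundary condition $u\cdot n|_{\Gamma}=0$, so that $m$ propagates across $\partial\Omega$ with the standard geometric-optics reflection rule. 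The glancing and strictly diffractive points require the refined trace control prepared in Section~\ref{Sec: Estimate of boundary trace, case strictly diffractive}; combined with the Melrose--Sj\"ostrand propagation theorem (in its semiclassical form adapted to the present system) this yields invariance of $m$ along every generalized bicharacteristic in the sense of Definition~\ref{def: GCC flat}.

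Finally, the conclusion is immediate. By Proposition~\ref{prop: bm null}, $m$ vanishes over $\omega=\{x\in\Omega;\, b(x)>0\}$. By (GCC), every generalized bicharacteristic starting in $\overline{\Omega}$ enters $\omega$ in finite time. The invariance just established then forces $m$ to vanish on that entire bicharacteristic; since the characteristic set is foliated by generalized bicharacteristics, $m\equiv 0$. I expect the main obstacle to lie in the treatment of strictly diffractive points: the interior transport and hyperbolic reflection follow from reasonably standard commutator calculations, but at tangential points $H_p$ degenerates and the estimates of Lemma~\ref{lem: first est trace} only deliver the $\varepsilon^{1/2}$-type control visible in~\eqref{est: glancing final}; closing this gap requires the full strength of the boundary trace estimate to be developed in Section~\ref{Sec: Estimate of boundary trace, case strictly diffractive}.
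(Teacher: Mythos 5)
Your overall strategy coincides with the paper's: interior transport of $m$ along $H_p$, a case analysis at the boundary (hyperbolic / glancing / diffractive), and then the combination of $bm=0$ (Proposition~\ref{prop: bm null}) with (GCC). The concluding argument is also the one the paper uses. However, as written your proposal is a plan rather than a proof, and the parts you defer are precisely where the mathematical content of this proposition lives.

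Concretely, three gaps. First, the boundary transport identity is not the naive one: the paper's Proposition~\ref{prop: propagation formula} shows that for a tangential symbol $a$ one has $\langle\{p,m\},a\rangle=0$, but for symbols of the form $a\xi_d$ the commutator argument produces a nonvanishing boundary term, namely $-\lim_{h\to0}(\ops(a(R-1))(r_h)_{|x_d=0},(r_h)_{|x_d=0})_{\pd}$ as in \eqref{prop: propagation formula 3}. Deriving this identity requires working with the $2\times2$ reduced system and Proposition~\ref{prop: limit tangential}, and it is delicate because $(r_h)_{|x_d=0}$ is not known to be bounded in $L^2$ (Lemma~\ref{lem: first est trace} only gives $h|\Lambda^{1/2}(r_h)_{|x_d=0}|^2\to0$); only the specific quadratic form above has a limit. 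Your sketch never produces this trace term, so the subsequent boundary analysis has nothing to propagate from. Second, you cannot simply invoke ``Melrose--Sj\"ostrand in semiclassical form'': the paper must first establish the structure of $m$ at the boundary (Lemmas~\ref{lem: measure on boundary}--\ref{lem: propagation on boundary if mu0 supported on xi-d equal 0}), i.e.\ the splitting $m=1_{x_d>0}m+m^\pd\otimes\delta_{x_d=0}\otimes\delta_{\xi_d=0}$, the fact that $H_p(1_{x_d>0}m)$ is a signed measure on $x_d=0$, the reflection identity $m^+=m^-$ at hyperbolic points, and above all $1_{\mathcal{G}_d}m^\pd=0$ at diffractive points; this last step is exactly where the trace term of \eqref{prop: propagation formula 3} must be shown to vanish via Proposition~\ref{prop: boundary term diffractive Neumann}, and it is the hardest point of the whole argument (you acknowledge the obstacle but do not close it). Third, two smaller inaccuracies: the elliptic region is handled not by \eqref{est: elliptic final} forcing $m=0$ there (that estimate serves the trace lemma), but simply because $m$ is supported on the characteristic set by Proposition~\ref{Prop: measure support and form}; and you omit the induction on the order of tangency needed for the points in ${\mathcal G}^k$, $k\ge3$, which the finite-contact hypothesis allows but does not make trivial.
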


First in this section we give some formulas yielding to the propagation results.  
This is stated in the following proposition.
%
%

\begin{prop}
	\label{prop: propagation formula}
We denote by $p(x,\xi)=g^{jk}\xi_j\xi_k$  and  $R(x,\xi')=\tilde g^{jk}\xi_k\xi_j $.

We have $ \{ p,m\}=0 $ in $\Omega\times \R^d$.

In a \nhd of $\pd \Omega$ i.e. $x_d=0$ in geodesic coordinates,
let $a(x,\xi') $ be a tangential symbol compactly supported in a \nhd of the boundary, we have,
\begin{align}
&  \langle \{ p,m\}, a\rangle =0 ,\label{prop: propagation formula 2}\\
&  \langle \{ p,m\},  a\xi_d\rangle
=- \lim_{h\to 0}(\ops(a(R-1))(r_h)_{|x_d=0},(r_h)_{|x_d=0})_{\pd}. \label{prop: propagation formula 3}
\end{align}
\end{prop}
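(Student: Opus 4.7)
The approach will be to reduce the system to an effective Helmholtz-type equation for $r_h$ and then apply the classical commutator method for propagation of semiclassical measures, while carefully tracking boundary contributions in geodesic coordinates. From the first equation of \eqref{fluidec-3}, $u_h = ih\nabla_g r_h + O(h)_{L^2}$; substituting into the second yields the scalar equation
\[
(h^2\Delta_g + 1)\,r_h \ = \ hG_h \quad\text{in } \Omega,
\]
with $G_h$ bounded in $L^2$. The boundary condition $(u_h^d)|_{x_d=0}=0$ combined with the $d$-th component of System~\eqref{eq: syst semi-class inter} forces $(hD_d r_h)|_{x_d=0}=O(h)$, which together with Lemma~\ref{lem: first est trace} will control every boundary trace appearing below.

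For the interior identity, I take a scalar symbol $a\in C_c^\infty(\Omega\times\R^d)$ and consider $I(h):=\frac{i}{h}([h^2\Delta_g+1,\ops(a)]r_h,r_h)$. By symbolic calculus the commutator has principal symbol $-ih\{p,a\}$, so by the definition \eqref{def: measure} of $m$, $I(h)\to\langle m,\{p,a\}\rangle$. On the other hand, expanding the commutator using the self-adjointness of $h^2\Delta_g+1$ (no boundary term since $\operatorname{supp}a\Subset\Omega\times\R^d$) together with $(h^2\Delta_g+1)r_h=hG_h$ gives
\[
I(h) \ = \ \frac{i}{h}\bigl((\ops(a)r_h,hG_h)-(\ops(a)hG_h,r_h)\bigr) \ = \ O(h) \ \to \ 0.
\]
Hence $\langle m,\{p,a\}\rangle=0$, which is exactly $\{p,m\}=0$ in $\Omega\times\R^d$.

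For the first boundary identity with tangential $a(x,\xi')$, the same calculation is performed in geodesic coordinates, writing $-h^2\Delta_g=(hD_d)^2+A+h\cdot(\text{l.o.t.})$ with $A$ tangential and self-adjoint. The bracket $[A+1,\ops(a)]$ contributes no boundary term, while the $(hD_d)^2$-commutator produces, via~\eqref{eq: integration parts functions},
\[
ih\Bigl[\bigl((hD_d\ops(a)r_h)|_0,r_h|_0\bigr)_\pd - \bigl((\ops(a)r_h)|_0,(hD_d r_h)|_0\bigr)_\pd\Bigr].
\]
Each summand is bounded by $|(hD_d r_h)|_0|\cdot|(r_h)|_0|$ plus $h|(r_h)|_0|^2$, both $o(h)$ using $(hD_d r_h)|_0=O(h)$ and Lemma~\ref{lem: first est trace}. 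Arguing as in the interior case gives $\langle m,\{p,a\}\rangle=0$, which is \eqref{prop: propagation formula 2}.

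The heart of the proof is the second boundary identity, in which the test symbol $a\xi_d$ is not tangential. I replace $\ops(a)$ in the commutator by $B:=\ops(a)\,hD_d$, whose principal symbol is $a\xi_d$ and whose commutator with $h^2\Delta_g+1$ has principal symbol $-ih\{p,a\xi_d\}$; the left-hand side still converges to $\langle m,\{p,a\xi_d\}\rangle$. On the right-hand side, the boundary terms from $(hD_d)^2$ now include a new contribution of the form $ih\bigl(((hD_d)^2r_h)|_0,\ops(a)^*(r_h)|_0\bigr)_\pd$, produced when $hD_d$ in $B$ reaches the boundary. Substituting the pseudo-differential identity $(hD_d)^2 r_h = -\ops(R-1)r_h + O(h)_{L^2}$ (a rewriting of $(h^2\Delta_g+1)r_h=hG_h$), this turns into $-ih\bigl(\ops(a(R-1))(r_h)|_0,(r_h)|_0\bigr)_\pd$ modulo $o(h)$, while all other boundary terms, quadratic in $(hD_d r_h)|_0=O(h)$ or controlled by Lemma~\ref{lem: first est trace}, are $o(h)$. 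Dividing by $ih$ yields \eqref{prop: propagation formula 3}. The main obstacle is rigorously taking the $x_d=0$ trace of the pseudo-differential identity for $(hD_d)^2 r_h$, since $r_h$ has only limited boundary regularity: the substitution must be performed through the pseudo-differential calculus separately in the hyperbolic, glancing, and elliptic microlocal regimes introduced in the proof of Lemma~\ref{lem: first est trace}, the glancing regime being the most delicate because the normal derivative loses its elliptic control there.
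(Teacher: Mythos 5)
Your interior argument and the overall shape of the boundary computation are close in spirit to the paper's, and you do land on the correct boundary term $-\bigl(\ops(a(R-1))(r_h)_{|x_d=0},(r_h)_{|x_d=0}\bigr)_\pd$. But there is a genuine gap at the very point you rely on to ``control every boundary trace'': the claim $(hD_d r_h)_{|x_d=0}=O(h)$. From $u_h^d+hD_dr_h=h\tilde f_h^d$ and $(u_h^d)_{|x_d=0}=0$ you only get $(hD_dr_h)_{|x_d=0}=h(\tilde f_h^d)_{|x_d=0}$, and $\tilde f_h^d$ is merely bounded in $L^2(\Omega)$, so its trace is neither defined nor controlled. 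The same problem reappears when you substitute $(hD_d)^2r_h=-\ops(R-1)r_h+O(h)_{L^2}$ at $x_d=0$: the $O(h)_{L^2(\Omega)}$ remainder contains $hG_h$ (with $q_h$ and derivatives of $f_h$), whose boundary trace is uncontrolled, so the error terms in your boundary bookkeeping cannot be estimated. Reducing to a scalar Helmholtz equation converts the exact boundary condition $(u_h^d)_{|x_d=0}=0$ into an inhomogeneous Neumann condition with an $L^2$-trace right-hand side, and this loss is fatal to the argument as written; the microlocal decomposition of Lemma~\ref{lem: first est trace} that you invoke at the end only yields $h|\Lambda^{1/2}(r_h)_{|x_d=0}|^2\to 0$ and gives no handle on traces of the data.

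The paper avoids this entirely by staying at the level of the $2\times2$ system in $(r_h,u_h^d)$ and choosing the antisymmetrized test quantity \eqref{A boundary 0} so that, after integration by parts, every boundary trace that appears is either $(u_h^d)_{|x_d=0}$ (exactly zero) or the single pairing $\bigl(\ops(a)(ih\div_{\tilde g}(ih\nabla_{\tilde g}r_h)-r_h)_{|x_d=0},(r_h)_{|x_d=0}\bigr)_\pd$, which is kept intact and identified as the limit in \eqref{prop: propagation formula 3}; no trace of $f_h$, $q_h$ or $hD_dr_h$ is ever taken. A second, lesser omission: to identify the limits of quantities such as $(\ops(c)(-ih\pd_d)r_h,r_h)_g$ computed on $\Omega$ (rather than on $\R^d$) with the measure $m$, you need the content of Proposition~\ref{prop: limit tangential}, which your appeal to \eqref{def: measure} alone does not supply. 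To repair your proof you would have to re-express every occurrence of $hD_dr_h$ at the boundary through $u_h^d$ before integrating by parts, and arrange the pairing so that the data never reach the boundary --- which is precisely the design of the paper's quantity $A$.
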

Observe that we do not know if $(r_h)_{|x_d=0}$ is bounded in $L^2$, Lemma~\ref{lem: first est trace}
is too weak. But the quantity $(\ops(a(R-1))(r_h)_{|x_d=0},(r_h)_{|x_d=0})_{\tilde g}$ has a limit.
A priori the measure $m$ is not defined on functions in $(x,\xi')$ but as $m$ is supported on 
$g^{jp}\xi_j\xi_p-1=0$, which is a bounded set on $\xi$, it is easy to extend $m$ to functions in  $(x,\xi')$.
At boundary we assume the metric written in geodesic coordinates (see Section \ref{sec: Riem tools}).

%
%
\subsubsection{Computations in interior} 
We introduce the following quantity, where $a$ is a  symbol compactly supported in $\Omega\times \R^d$.
In what follows we shall choose $b^j_k$ depending on $a$
but the choice of $b^j_k$ is more natural after the computations below. Note that $b^j_k$ are also 
compactly supported in $\Omega\times \R^d$. In interior $\underline{u}_h$ and $\underline{r}_h$ are $u_h$ and $r_h$. Rigorously we need to introduce a cutoff to do that but this  introduces error terms of order $O(h^\infty)$.
\begin{align}
	\label{eq: A interior tangential}
A&=(\ops(a)(u_h-ih\nabla_gr_h),u_h)_g+(\ops(a)(r_h-ih\div_g u_h),r_h)_g\\
&\quad -(\ops(a)u_h,u_h-ih\nabla_g r_h)_g-(\ops(a)r_h,r_h-ih\div_g u_h)_g\notag\\
&\quad +(ih\ops(b^j_k)(u_h^k-ih(\nabla_gr_h)^k),u_h^\ell)_g. \notag
\end{align}
Here and in what follows, we use Einstein notation, see Section~\ref{sec: Riem tools}.

Obviously, the assumptions on System~\eqref{eq: syst semi-class propagation} imply that $h^{-1}A\to 0$ as $h\to 0$.

We can simplify $A$ and we have,
\begin{align}
	\label{eq: A formula 0}
A&=-(ih\ops(a)\nabla_gr_h,u_h)_g-(ih\ops(a)\div_g u_h,r_h)_g\\
&\quad +(\ops(a)u_h,ih\nabla_g r_h)_g+(\ops(a)r_h,ih\div_g u_h)_g\notag\\
&\quad +(ih\ops(b^j_k)u_h^k,u_h^\ell)_g+(h\ops(b^j_k)(h\nabla_gr_h)^k,u_h^\ell)_g. \notag
\end{align}
We integrate by parts (see~\eqref{eq: integrate parts without bd}) and we write by $R$ terms estimated by $h(\| u_h\|^2+\|r_h\|^2)$ playing no role in the analysis.
Observe that no boundary term appears as $a$ is supported far away the boundary and 
 we have
 \begin{align}
 	\label{eq: A formula 1}
h^{-1}A&=(i([\nabla_g,\ops(a)]r_h,u_h)_g+(i[\div_g,\ops(a) ]u_h,r_h)_g\\
&\quad +(i\ops(b^j_k)u_h^k,u_h^\ell)_g+(h\ops(b^j_k)(\nabla_gr_h)^k),u_h^\ell)_g \notag\\
&=(i\ops\{ g ^{jk}\xi_k, a\}r_h,(u_h)^\ell)_g+(i\ops\{\xi_k,a\}u_h^k,r_h)_g\notag \\
&\quad +(i\ops(b^j_k)u_h^k,u_h^\ell)_g+(i\ops(b^j_kg^{k\mu}  \xi_\mu)r_h,u_h^\ell)_g+R. \notag
\end{align}
To use Formula~\eqref{def: measure} defining the semi-classical measure, we need a symmetry property.
Let $b^j=\{g^{jk}\xi_k, a\}+b^j_kg^{k\ell}\xi_\ell$. The symmetry required is $g_{pj}b^j=\{\xi_p,a\}$.
 Then we have
 \[
 g_{pj}b^j=g_{pj}\{g^{jk}\xi_k, a\}+g_{pj}b^j_kg^{k\ell}\xi_\ell,
 \]
 and 
 \[
 \{\xi_p,a\}= \{ g_{pj}g^{jk}\xi_k,a\}=g_{pj}\{ g^{jk}\xi_k,a\}+g^{jk}\xi_k\{ g_{pj},a\}.
 \]
 Then
 \[
 g_{pj}b^j= \{\xi_p,a\}- g^{jk}\xi_k\{ g_{pj},a\}+g_{pj}b^j_kg^{k\ell}\xi_\ell.
 \]
 Let $b_k^j=g^{jq}\{g_{qk},a\}$, we have
 \[
 g_{pj}b^j_kg^{k\ell}\xi_\ell= g_{pj}g^{jq}\{g_{qk},a\}g^{k\ell}\xi_\ell=\{g_{pk},a\}g^{k\ell}\xi_\ell= g^{jk}\xi_k\{ g_{pj},a\},
 \]
 changing the summation indexes. This choice of $b_k^j$ allows to apply~\eqref{def: measure}.
 
With the previous definition of $b^j_k$, Formula~\eqref{eq: A formula 1} writes
  \begin{align}
h^{-1}A
&=(i\ops\{ g ^{jk}\xi_k, a\}r_h,(u_h)^\ell)_g+ (i\ops\{\xi_k,a\}u_h^k,r_h)_g\notag \\
&\quad+(i\ops(g^{jq}\{g_{qk},a\})u_h^k,u_h^\ell)_g
+(i\ops( g^{jq}\{g_{qk},a\}   g^{k\mu}  \xi_\mu)r_h,u_h^\ell)_g+R. \notag
\end{align}
Then $ih^{-1}A$ converges to $-\langle \mu_j^k, g^{jq}\{g_{qk},a\}\rangle    
- 2  \langle \nu_j ,\{ g ^{jk}\xi_k, a\}+ g^{jq}\{g_{qk},a\}   g^{k\mu}  \xi_\mu \rangle =0$.
From Proposition~\ref{Prop: measure support and form},  $\mu_j^k=g^{kp}\xi_p\xi_j m$ and $\nu_j=-m\xi_j$, 
we thus obtain
\begin{equation}
	\label{eq: braket with m}
-\langle g^{kp}\xi_p\xi_j m, g^{jq}\{g_{qk},a\}\rangle    
+ 2  \langle m\xi_j,\{ g ^{jk}\xi_k, a\}+ g^{jq}\{g_{qk},a\}   g^{k\mu}  \xi_\mu \rangle =\langle m,F\rangle =0,
\end{equation}
 where
 \[
 F=- g^{kp}\xi_p\xi_jg^{jq}\{g_{qk},a\}+2\xi_j\{ g ^{jk}\xi_k, a\}+2\xi_j g^{jq}\{g_{qk},a\}   g^{k\mu}  \xi_\mu .
 \]
 Observing that $0=\{g^{jq}g_{qk},a \}=g^{jq}\{g_{qk},a \}+g_{qk}\{g^{jq},a \}$, we have
 \begin{align*}
  F&= g^{kp}g_{qk}\xi_p\xi_j\{g^{jq},a\}+2\xi_j\{ g ^{jk}\xi_k, a\}-2g_{qk}\xi_j \{g^{jq},a\}   g^{k\mu}  \xi_\mu \\
  &=\xi_q\xi_j\{g^{jq},a\}+2\xi_j\{ g ^{jk}\xi_k, a\}-2\xi_j   \xi_q \{g^{jq},a\} \\
  &=\xi_q\xi_j\{g^{jq},a\}+2 g ^{jq}\xi_j\{\xi_q, a\}.
 \end{align*}
As
\[
\{g^{jq}\xi_j\xi_q,a\}=g^{jq}\xi_j\{\xi_q,a\}+g^{jq}\xi_q\{\xi_j,a\}+\xi_j\xi_q\{g^{jq},a\}
=2g^{jq}\xi_j\{\xi_q,a\}+\xi_j\xi_q\{g^{jq},a\},
\]
by symmetry of $g^{jq}$. Then $F=\{g^{jq}\xi_j\xi_q,a\}$. Then from~\eqref{eq: braket with m} we have 
$\langle m,\{g^{jq}\xi_j\xi_q,a\}\rangle=0$ or $\langle \{g^{jq}\xi_j\xi_q,m\}, a \rangle=0$.  
This proves Proposition~\ref{prop: propagation formula} in $\Omega\times \R^d$.

 To prove Proposition~\ref{prop: propagation formula} at boundary, 
we have to use tangential operator, we consider System~\eqref{fluidec-3},  instead System~\eqref{eq: syst semi-class} 
and to conclude we have to
prove an analogous  result as Formula~\eqref{def: measure} where one integrate on $\Omega$ instead on $\R^d$.

Let us recall System~\eqref{fluidec-3}. The system satisfies by $u_h$ and $r_h$ is the following.
\begin{equation}
 	\label{eq: syst semi-class propagation}
\begin{cases}
&u_h-ih\nabla_g r_h=hf_h , \text{ in } \Omega \\
& -ih\div_g u_h+r_h=h q_h,  \text{ in } \Omega \\
&u_h. n=0 \text{ on } \pd\Omega
\end{cases}
\end{equation}
where $\|f_h \| +\| q_h\|\to 0$ as $h\to 0$, and $\| u_h\|+\|r_h\|$ bounded uniformly with respect $h$.

%
%
%
\subsubsection{Limit on quantities on $\Omega$.}

%
%
In the following result, we prove that the measure defined for $(\underline{u_h},\underline{r_h})$ also appear 
in the limit of some quantities computing on $x_d>0$.
\begin{prop}
	\label{prop: limit tangential}
We assume $a_j^k,b^j,c\in \Con^\infty(  \R^d_x\times\R^{d-1}_{\xi'}) $ compactly supported, satisfying $g^{\ell j}a_j^k=g^{kj}a_j^\ell$ and supported (in $x$) on a \nhd of a point of the boundary. We assume the metric $g$ written in geodesic 
coordinates.
Then 
\begin{enumerate}
\item $(\ops(a_j^k)u_h^j,u_h^k)_g$ converges to $ \langle \mu_j^k,a_k^j\rangle=\langle g^{kp}\xi_p\xi_j m,a_k^j\rangle$ 
as $h $ to 0.
\item  $((\ops(b^j)r_h),(u_h^k))_g+(g_{jk}\ops(b^j) u_h^k,r_h)_g$ converges to 
$ 2\langle \nu_j,b^j\rangle=-2\langle \xi_j m,b^j\rangle$ as $h $ to 0.
\item $(\ops(c)r_h,r_h)_g$ converges to 
$ \langle m, c\rangle$ as $h $ to 0.
\item $(\ops(c)(-ih\pd_d)u_h^d,u_h^d)_g$ converges to $ \langle \mu_d^d,c\xi_d\rangle=\langle \xi_d^2 m,c\xi_d\rangle$ 
as $h $ to 0.
\item  $(\ops(c)(-ih\pd_d)r_h,u_h^d)_g+(\ops(c)(-ih\pd_d) u_h^d,r_h)_g$ converges to 
$ 2\langle \nu_d,c\rangle=-2\langle \xi_dm,c\rangle$ as $h $ to 0.
\item $(\ops(c)(-ih\pd_d)r_h,r_h)_g$ converges to 
$ \langle m, c\xi_d\rangle$ as $h $ to 0.
\end{enumerate}
In previous formulas, the  integration in the inner product is taken on $\Omega$ that is, on local coordinates, 
on $x_d>0$.
\end{prop}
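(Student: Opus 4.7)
The proof unfolds in two stages: items (1)--(3) will follow essentially from the definition~\eqref{def: measure} of the semiclassical measure, whereas items (4)--(6), which involve the normal derivative $-ih\partial_d$, will be reduced to (1)--(3) using System~\eqref{eq: syst semi-class propagation}.

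For items (1)--(3), the key observation is that the tangential operator $\ops(a)$ is local in $x_d$: its definition only involves the tangential Fourier transform in $x'$, so the value of $\ops(a)v$ at a point $(x',x_d)$ depends only on $v(\cdot,x_d)$. Since $\underline{u}_h$ and $\underline{r}_h$ extend $u_h$ and $r_h$ by zero outside $\Omega$, and the symbols are compactly supported near a boundary point in geodesic coordinates where $\Omega=\{x_d>0\}$, we have
\[
(\ops(a_j^k) u_h^j, u_h^k)_g = (\ops(a_j^k)\underline{u}_h^j, \underline{u}_h^k)_g,
\]
and analogous identities for the other inner products. The symmetry assumption $g^{\ell j}a_j^k = g^{k j}a_j^\ell$ is exactly what allows us to apply~\eqref{def: measure} with only the $a$-term nonzero; likewise for items (2) and (3) we take $a=0$, $c=0$ (resp.\ $a=0$, $b=0$). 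The limits are then $\langle \mu_j^k, a_k^j\rangle$, $2\langle \nu_j, b^j\rangle$, and $\langle m, c\rangle$ respectively, and the formulas $\mu_j^k = g^{kp}\xi_p \xi_j m$ and $\nu_j = -\xi_j m$ from Proposition~\ref{Prop: measure support and form} yield the stated expressions.

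For items (4)--(6), I plan to eliminate the normal derivative via the system. In geodesic coordinates $g^{jd}=\delta_{jd}$ and $g_{jd}=\delta_{jd}$, so the $d$-component of the first equation in~\eqref{eq: syst semi-class propagation} reads $-ih\partial_d r_h = -u_h^d + hf_h^d$, while expanding the divergence in the second equation yields
\[
-ih\partial_d u_h^d = r_h - hq_h + \sum_{j < d}\ops(\xi_j) u_h^j + h T u_h,
\]
where $T$ is $L^2$-bounded and arises from $|g|^{-1/2}\partial_k|g|^{1/2}$. Substituting these identities into each of (4), (5), (6) converts each expression into a combination of tangential inner products of the types treated in (1)--(3), plus remainders that are $o(1)$ since $\|f_h\|$ and $\|q_h\|$ tend to $0$. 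For instance, item (6) becomes $-(\ops(c) u_h^d, r_h)_g + o(1)$, which matches the contribution of item (2) applied to $b^j$ supported only in the $d$-th component, yielding $\langle m, c\xi_d\rangle$; items (4) and (5) reduce similarly.

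The main obstacle is the bookkeeping needed to symmetrize properly when applying~\eqref{def: measure}: the definition is stated with a symmetric pair of inner products, but individual substitutions for $-ih\partial_d u_h^d$ or $-ih\partial_d r_h$ produce only one half. To isolate each half, I will relate one inner product to the complex conjugate of the other via the adjoint formula $\ops(c)^* = \ops(\bar c) + h\,\ops(r_{-1})$, and use that the relevant limits are real (since $m$ is a real non-negative measure and the symbols are real) to identify the limit of each half as one half of the symmetric total. Once this is checked, applying items (1)--(3) with the appropriate choices of $a_j^k$, $b^j$, $c$ and Proposition~\ref{Prop: measure support and form} delivers the six stated limits.
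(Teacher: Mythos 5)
Your overall strategy coincides with the paper's at the level of architecture: items (4)--(6) are indeed reduced to (1)--(3) by substituting the two equations of \eqref{eq: syst semi-class propagation}, and your observation that tangential operators act fibrewise in $x_d$, so that the inner products over $\Omega$ equal those of the extensions over $\R^d$, is correct and is the paper's starting point. However, your treatment of items (1)--(3) has a genuine gap. The relation~\eqref{def: measure} defines the measure by testing against symbols of the \emph{full} phase variable $(x,\xi)$, whereas the symbols in Proposition~\ref{prop: limit tangential} are independent of $\xi_d$. One cannot ``apply~\eqref{def: measure}'' to such a symbol: the convergence of $(\ops(a)\underline{u}_h^j,\underline{u}_h^k)$ to $\langle\mu_j^k,a\rangle$ for a symbol constant in $\xi_d$ requires showing that no mass escapes to $|\xi_d|\to\infty$. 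That is the bulk of the paper's proof: it inserts a cutoff $\ops(\phi(\xi_d/R))$, uses that $m$ is carried by the compact characteristic set so that the cut-off symbol already gives the right limit for $R$ large, and then controls the remainder $\ops(1-\phi(\xi_d/R))$ case by case --- via the $H^s_{sc}$, $s<1/2$, bound \eqref{est: H s sc} on $\underline{r}_h$ for the components $u_h^j$, $j\le d-1$ (which, in geodesic coordinates, involve only tangential derivatives of $r_h$), and, for $(\ops(a)u_h^d,u_h^d)$, via an integration by parts in $x_d$ using $(u_h^d)_{|x_d=0}=0$ together with the divergence equation to convert $-ih\pd_d u_h^d$ into tangential quantities. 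None of this appears in your proposal, and without it even item (1) with $j=k=d$ is unproved.

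Your resolution of the ``one half of the symmetrized pair'' problem in (4)--(6) also does not close. Relating $(\ops(c)u_h^d,r_h)$ to the conjugate of $(\ops(c)r_h,u_h^d)$ by the adjoint formula only identifies the limit of the \emph{real part} of each cross term with half the symmetrized limit; it gives no control on the imaginary part, and ``the limit is real because $m$ is real'' is circular: the off-diagonal entries of the underlying matrix-valued semiclassical measure are a priori complex, and their reality is precisely what has to be proved. The paper sidesteps this by using the first equation $u_h^d=ih\pd_d r_h+hf_h^d$ to turn every cross term into a quadratic form in $r_h$ alone; its imaginary part is then an $O(h)$ commutator plus a boundary contribution of size $h\,|(r_h)_{|x_d=0}|^2$, which tends to $0$ by Lemma~\ref{lem: first est trace}. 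The missing ingredient in your symmetrization step is therefore the trace estimate of Lemma~\ref{lem: first est trace}, not the non-negativity of $m$.
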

 \begin{proof}
From Proposition~\ref{Prop: measure support and form}, $m$ is supported on  $  (g^{jp}(x)\xi_p  \xi_j-1)=0$.
The $m$ is compactly supported and the 
expressions given at the limit make sens. 

As $r_h$ and $h\nabla_g r_h$ are in $L^2(x_d>0)$, the extension  $\underline{r}_h$ is in  $H^s_{sc}$ for $s<1/2$ and 
\[
\| \underline{r}_h\|_{H^s_{sc}}\lesssim \| r_h\|_{L^2(x_d>0)}+ \|h\nabla_g r_h\|_{L^2(x_d>0)}.
\]
To prove the three first item, we observe that we can replace $u_h$ and $r_h$  by the extension. Then we introduce 
$\ops( \phi(\xi_d/R))$ and Proposition~\ref{Prop: measure support and form}
gives the result as for instance $ \langle \mu_j^k,a_k^j\phi(\xi_d/R) \rangle= \langle \mu_j^k,a_k^j\rangle$ for
 $R$ sufficiently large. We have to estimate the remainder terms with $\ops(1- \phi(\xi_d/R))$. To do that 
 we have to distinguish different cases.
 To prove the first item, if $j\le d-1$ or $k\le d-1$ we have in the first case to estimate 
 $(\ops(a_j^k)\ops(1- \phi(\xi_d/R)) \underline{u}_h^j,\underline{u}_h^k)_g$. As $u^j=ih (\nabla_g r_h)^j+hf_h^j$
 we have 
 \[
 (\ops(a_j^k)\ops(1- \phi(\xi_d/R)) \underline{u}_h^j,\underline{u}_h^k)_g
 =  (\ops(a_j^k)\ops(1- \phi(\xi_d/R))( ih (\nabla_g \underline{r}_h)^j+h\underline{f}_h^j),\underline{u}_h^k)_g.
 \]
From the form of the metric $g$,  $ (\nabla_g r_h)^j$ does not contain term $\pd_d r_h$ and as $a_j^k$ is compactly 
supported and by symbolic calculus we can write $ \ops(a_j^k)(-ih\pd_\ell)=\ops(a_j^k\xi_\ell)$ for $\ell\le d-1$ modulo 
$h$ times a bounded operator on $L^2$. Obviously $\ops(a_j^k\xi_\ell) $ is bounded on $L^2$ and the term 
$\ops(1- \phi(\xi_d/R) \underline{r}_h$ can be estimate from estimate~\eqref{est: H s sc}

As this term goes to 0 as $R$ to $\infty$ and  $(\ops(a_j^k)u_h^j,u_h^k)_g$ does not depend on $R$, we obtain 
the result.

If $j=d$ and $k=d$, we have $u_h^d=ih\pd_d r_h+hf_h^d$. Then, from~\eqref{eq: syst semi-class propagation}, 
using $(u_h^d)_{|x_d=0}$, symbolic calculus and integrating by parts,  we have
 \begin{align*}
 2(\ops(a)u_h^d,u_h^d)&=(\ops(a)(-ih\pd_d) r_h,u_h^d)+(\ops(a)u_h^d,  (-ih\pd_d )r_h) +O(h)\\
 &=(\ops(a) r_h,(-ih\pd_d)u_h^d)+(\ops(a) (-ih\pd_d )u_h^d, r_h) +O(h)\\
 &= (\ops(a) r_h,(ih\div_{\tilde g}){\tilde u}_h-r_h)+(\ops(a)\big( (ih\div_{\tilde g} ){\tilde u}_h-r_h\big), r_h) +O(h)\\
  &= ( r_h,\ops(a)^*\big( (ih\div_{\tilde g}){\tilde u}_h-r_h\big) )+(\ops(a)\big( (ih\div_{\tilde g} ){\tilde u}_h-r_h\big), r_h) +O(h).
 \end{align*}
 As $(\ops(a) (ih\div_{\tilde g} )$ and $\ops(a)^*(ih\div_{\tilde g})$ are tangential bounded operators, we can conclude 
 as the previous terms, extending the $L^2$ functions to $\R^d$ by 0 in $x_d<0$ and introducing a cutoff 
 $\ops( \phi(\xi_d/R))$. Terms $(\ops(a) r_h, r_h)$ and  $((\ops(b^j)r_h),(u_h^k))_g+(g_{jk}\ops(b^j) u_h^k,r_h)_g$
 can be treated following  same method, as we can introduce the cutoff $\ops(1- \phi(\xi_d/R))$ on terms $r_h$.
 
 To treat the second and third items, we can follow the proof of first item, where we have already computed terms as 
 $(\ops(a) r_h, u_h^j)$ for $j\le d-1$ or $(\ops(a) r_h, u_h^d)$. 
 
 For the forth item we can replace $-ih\pd_d u_h^d$ by $ih\div_{\tilde g} {\tilde u}_h-r_h$ and we obtain analogous terms 
 treated in first item.
 
 For firth item, we can replace $ih\pd_d r_h$ by $u_h^d$ and $-ih\pd_d u_h^d$ by $ih\div_{\tilde g}{\tilde u}_h-r_h$.
 We can treat terms obtained as   previous terms.
 
 For sixth item we replace $ih\pd_d r_h$ by $u_h^d$ and we find terms already treated.
 \end{proof}

%
%

 \subsubsection{Computations at boundary, tangential symbol}
 
 Let $a(x,\xi')$ be a tangential symbol compactly supported in a \nhd of a point of boundary. We introduce the 
 same $A$ as in Formula~\eqref{eq: A interior tangential}, where $a$ is now a tangential symbol. We obtain 
 by the same simplification Formula~\eqref{eq: A formula 0}. We integrate by parts and two terms have to be consider
$ (\ops(a)r_h,ih\div_g u_h)_g$ and    $(\ops(a)u_h,ih\nabla_g r_h)_g$, as other terms do not give boundary terms. 
Applying~\eqref{eq: integration parts boundary}  the first gives
\[
 (\ops(a)r_h,ih\div_g u_h)_g= (ih\nabla_g\ops(a)r_h, u_h)_g-   (\ops(a)(r_h)_{|x_d=0},ih (u_h^d)_{|x_d=0})_\pd,
\]
and the second gives
 \[
 (\ops(a)u_h,ih\nabla_g r_h)_g=(ih\div_g\ops(a)u_h, r_h)_g   -   (\ops(a) (u_h^d)_{|x_d=0},ih (r_h)_{|x_d=0})_\pd,
 \]
 and as  $(u^d_h)_{|x_d=0}=0$, in both cases, the boundary term disappears. 
 From that, the rest of the proof does not change and we find the same conclusion with  $a$ a  tangential 
 symbol, $\langle \{g^{jq}\xi_j\xi_q,m\}, a \rangle=0$. 
 
%
%
%

\subsubsection{	Computations at boundary, tangential symbol with normal derivative}

We need computation with symbol as $a(x,\xi') \xi_d$, but we cannot compute directly considering previous formula.
For instance in the first term 
of \eqref{eq: A interior tangential} we should have a term $\pd_df_h$ if we replace $a$ by $a\xi_d$ and if we put 
$\pd_d$ at the right hand side of inner product we have term $\pd_d u_h$ and the tangential part of this term is not 
in $L^2$. We have to introduce another quantity.

From System~\eqref{fluidec-3}, we obtain the following  $2\times 2$ system, replacing $u_h^j$ in function of $r_h$ and 
$f_h^j$ as we do to obtain System~\eqref{eq: syst semi-class 2 by 2}, we have
\begin{equation}
	\label{System 2x2 boundary prop}
\begin{cases}
&-ih\pd_dr_h+ u_h^d -ihbu_h^d=h\tilde f_h^d \\
&-ih |g|^{-1/2}\pd_d ( |g|^{1/2}u_h^d)-ih\div _{\tilde g}(ih\nabla_{\tilde g}r_h)+r_h=h\tilde q_h,
\end{cases}
\end{equation}
where $\tilde f_h^d=if_h^d$, $\tilde q_h=iq_h+ih\div_{\tilde g}{\tilde f_h}+ih\div_{\tilde g}(b\tilde u)$, 
$\tilde f_h=(f^j_h)_{j= 1,\dots ,d-1} $, and $\tilde u_h=(u^j_h)_{j= 1,\dots ,d-1} $.

For a symbol $a(x,\xi')$ compactly supported, we introduce the following quantity
\begin{align}
	\label{A boundary 0}
A&= (\ops(a)(u_h^d-ih\pd_dr_h),ih\div _{\tilde g}(ih\nabla_{\tilde g}r_h)-r_h)_g \\
&\quad  -(\ops(a)(ih\div _{\tilde g}(ih\nabla_{\tilde g}r_h)-r_h),u_h^d-ih\pd_dr_h)_g   \notag \\
&\quad -(\ops(a)(-ih|g|^{-1/2}\pd_d( |g|^{1/2}u_h^d))-ih\div _{\tilde g}(ih\nabla_{\tilde g}r_h)
+r_h , u_h^d)_g\notag \\
&\quad +(\ops(a)u_h^d,-ih|g|^{-1/2}\pd_d( |g|^{1/2}u_h^d)
-ih\div _{\tilde g}(ih\nabla_{\tilde g}r_h)+r_h)_g.  \notag  
\end{align}
We can simplify some terms with $u_h^d$, and we obtain
\begin{align}
	\label{A boundary 1}
A&= (\ops(a)(-ih\pd_dr_h),ih\div _{\tilde g}(ih\nabla_{\tilde g}r_h)-r_h)_g \\
&\quad  -(\ops(a)(ih\div _{\tilde g}(ih\nabla_{\tilde g}r_h)-r_h),-ih\pd_dr_h)_g   \notag \\
&\quad -(\ops(a)(-ih|g|^{-1/2}\pd_d( |g|^{1/2}u_h^d)) , u_h^d)_g\notag \\
&\quad +(\ops(a)u_h^d,-ih|g|^{-1/2}\pd_d( |g|^{1/2}u_h^d))_g  .\notag  
\end{align}
We have to compute $A$ from two different manners.
First we integrate  by parts applying \eqref{eq: integration parts boundary}, we have 
 \begin{align*}
&(\ops(a)(-ih\pd_dr_h),ih\div _{\tilde g}(ih\nabla_{\tilde g}r_h)-r_h)_g
= ( (ih\div _{\tilde g}(ih\nabla_{\tilde g})-1)\ops(a)(-ih\pd_d)r_h,r_h)_g,\\[ 4pt]
& -(\ops(a)(ih\div _{\tilde g}(ih\nabla_{\tilde g}r_h)-r_h),-ih\pd_dr_h)_g 
=(ih\pd_d(\ops(a)(ih\div _{\tilde g}(ih\nabla_{\tilde g}r_h)-r_h)),r_h)_g \\
&\quad+ ih( \ops(a)(ih\div _{\tilde g}(ih\nabla_{\tilde g}r_h)-r_h)_{|x_d=0},(r_h)_{|x_d=0})_{\pd},\\[ 4pt]
& (\ops(a)u_h^d,-ih|g|^{-1/2}\pd_d( |g|^{1/2}u_h^d))_g=(-ih|g|^{-1/2}\pd_d( |g|^{1/2}\ops(a)u_h^d),u_h^d)_g, 
 \end{align*}
as $(u_h^d)_ {|x_d=0}=0 $ in the last computation, the boundary term is null.  

From \eqref{A boundary 1} and previous computations, we have 
 \begin{align}
 	\label{eq: first comp boundary terms prop}
 A&= ( (ih\div _{\tilde g}(ih\nabla_{\tilde g})-1)\ops(a)(-ih\pd_d)r_h,r_h)_g \\
&\quad  -(-ih\pd_d(\ops(a)(ih\div_{\tilde g}(ih\nabla_{\tilde g}r_h)-r_h)),r_h)_g \notag  \\
& \quad + ih( \ops(a)(ih\div_{\tilde g}(ih\nabla_{\tilde g}r_h)-r_h)_{|x_d=0},(r_h)_{|x_d=0})_{\pd} 
 \notag \\
&\quad -(\ops(a)(-ih|g|^{-1/2}\pd_d( |g|^{1/2}u_h^d)) , u_h^d)_g\notag \\
&\quad +(-ih|g|^{-1/2}\pd_d( |g|^{1/2}\ops(a)u_h^d),u_h^d)_g \notag  \\
&=(K_1r_h,r_h)_g+(K_2u_h^d,u_h^d)_g  
+ ih( \ops(a)(ih\div _{\tilde g}(ih\nabla_{\tilde g}r_h)-r_h)_{|x_d=0},(r_h)_{|x_d=0})_{\pd}, \notag
\end{align}
where 
\begin{align*}
&K_1= (ih\div _{\tilde g}(ih\nabla_{\tilde g})-1)\ops(a)(-ih\pd_d)  
-ih\pd_d(\ops(a)(ih\div _{\tilde g}(ih\nabla_{\tilde g})-1)) \\
&K_2=-ih|g|^{-1/2}\pd_d |g|^{1/2}\ops(a)-\ops(a)  (-ih|g|^{-1/2})\pd_d |g|^{1/2}
= [  -ih|g|^{-1/2}\pd_d |g|^{1/2}, \ops(a) ]  .
\end{align*}
The principal symbol of $K_2$ is $-ih\{ \xi_d, a\}$ which is a tangential symbol. We write
\begin{align*}
K_1&=[ ih\div _{\tilde g}(ih\nabla_{\tilde g})-1 ,  \ops(a) ] (-ih\pd_d) \\
&\quad + \ops(a)[  ih\div _{\tilde g}(ih\nabla_{\tilde g})-1 , -ih\pd_d ]  +[  \ops(a) , -ih\pd_d  ]  (ih\div _{\tilde g}(ih\nabla_{\tilde g})-1).
\end{align*}
The principal symbol of $K_1$ is then 
\begin{align*}
 &-ih \{ R(x,\xi')-1 ,a(x,\xi') \}\xi_d-ih  a(x,\xi')\{ R(x,\xi')-1 , \xi_d\}
-ih\{ a(x,\xi'),\xi_d \} (R(x,\xi')-1)\\
&=-ih \{ R(x,\xi') ,a(x,\xi') \}\xi_d-ih  a(x,\xi')\{ R(x,\xi') , \xi_d\}
-ih\{ a(x,\xi'),\xi_d \} (R(x,\xi')-1).
\end{align*}
Then from Proposition~\ref{prop: limit tangential}, we have 
\begin{align*}
&\lim _{h\to 0}h^{-1}\big((K_1r_h,r_h)_g+(K_2u_h^d,u_h^d)_g\big)\\
&= \langle m,   
-i \{ R(x,\xi') ,a(x,\xi') \}\xi_d-i a(x,\xi')\{ R(x,\xi') , \xi_d\}
-i\{ a(x,\xi'),\xi_d \} (R(x,\xi')-1)-i \xi_d^2   \{ \xi_d, a(x,\xi')\} \rangle\\
&=  \langle m,   -i \{ R(x,\xi') ,\xi_da(x,\xi') \}  -2i \xi_d^2   \{ \xi_d, a(x,\xi')\} \rangle,
\end{align*}
as $m$ is null on $\xi_d^2+R(x,\xi') -1$.
Observing that
\[
\{ \xi_d^2+R(x,\xi') , \xi_da(x,\xi') \} =2\xi_d^2\{\xi_d,a(x,\xi')\}+\{R(x,\xi') ,\xi_d a(x,\xi') \} ,
\]
we obtain 
\begin{align}
	\label{eq: intern term boundary prop}
\lim _{h\to 0}h^{-1}\big((K_1r_h,r_h)_g+(K_2u_h^d,u_h^d)_g\big)
= \langle m,-i\{ \xi_d^2+R(x,\xi') , \xi_da(x,\xi') \} \rangle.
\end{align}
Second we compute  the limit of $h^{-1}A$, using $2\times 2$ system. From~\eqref{System 2x2 boundary prop} and 
\eqref{A boundary 0} we have
\begin{align*}
A&=
(\ops(a) (h\tilde f_h^d+ihbu_h^d), ih\div _{\tilde g}(ih\nabla_{\tilde g}r_h)-r_h)_g  \\
& \quad
-(\ops(a)(ih\div _{\tilde g}(ih\nabla_{\tilde g}r_h)-r_h),ihbu_h^d+h\tilde f_h^d)_g  
-(h\ops(a)\tilde q_h,  u_h^d)_g
+(\ops(a)u_h^d,h\tilde q_h )_g.
\end{align*}
Observe that $\pd_j \ops(a) $ and $\ops(a) \pd_j$ for $j=1,\dots d-1$ are  tangential operators bounded 
on $L^2$ as $a(x,\xi')$ is compactly supported. Some terms in $\tilde q_h$ 
can be write $\ops(a)\div_{\tilde g} \tilde f_h$ or  $\ops(a)^*\div_{\tilde g} \tilde f_h$, these terms are 
bounded on $L^2$ and disappear passing to the limit. 
Then we have
\begin{align*}
A&=(ih\ops(a)( bu_h^d), ih\div _{\tilde g}(ih\nabla_{\tilde g}r_h)-r_h)_g  
-(\ops(a)(ih\div _{\tilde g}(ih\nabla_{\tilde g}r_h)-r_h),ihbu_h^d)_g  \\
& \quad-(h\ops(a)ih\div_{\tilde g}   (b\tilde u_h)  ,  u_h^d)_g
+(\ops(a)u_h^d,ih^2\div_{\tilde g}   (b\tilde u_h)   )_g+ o(h).
\end{align*}
Integrating by parts, the two last terms, we have
\begin{align*}
&-(h\ops(a)ih\div_{\tilde g}   (b\tilde u_h)  ,  u_h^d)_g
+(\ops(a)u_h^d,ih^2\div_{\tilde g}   (b\tilde u_h)   )_g\\
&\quad=-(h\ops(a)ih\div_{\tilde g}   (b\tilde u_h)  ,  u_h^d)_g
+(ih^2 b \nabla_{\tilde g}    \ops(a)u_h^d,\tilde u_h  )_g\\
&=h(\ops (M)u_h,u_h)_g,
\end{align*}
where $M$ is a matrix of symbole supported on the support of $b$. This term gives 0 passing to the limit in $h$ 
as $bm=0$ from Proposition~\ref{prop: bm null}.

From Proposition~\ref{prop: limit tangential}, item (2), choosing $b^d= iab(R-1)$ and $b^j=0$ for $j=1,\dots d-1$, we obtain
\begin{equation}
	\label{eq: second comp boundary prop}
\lim_{h\to 0}h^{-1}A=\langle  m ,  -2i\xi_d   a(x,\xi')b(x)(R(x,\xi')-1)        \rangle =0,
\end{equation}
from Proposition~\ref{prop: bm null} and 
from~\eqref{eq: first comp boundary terms prop}, \eqref{eq: intern term boundary prop} 
and \eqref{eq: second comp boundary prop}, we obtain that 
\[
 ( \ops(a)(ih\div _{\tilde g}(ih\nabla_{\tilde g}r_h)-r_h)_{|x_d=0},(r_h)_{|x_d=0})_{\pd} ,
\]
has a limit when $h$ goes to 0, and we have
\[
\langle\{ \xi_d+R(x,\xi')    ,  m \}  , \xi_da(x,\xi')\rangle 
= \lim_{h\to 0}  (- \ops(a)(ih\div _{\tilde g}(ih\nabla_{\tilde g}r_h)-r_h)_{|x_d=0},(r_h)_{|x_d=0})_{\pd}.
\]

As the principal symbol of $\ops(a)(ih\div _{\tilde g}(ih\nabla_{\tilde g})-1)$ is
$a(x,\xi') (R(x,\xi') -1)(x,\xi')$, we obtain the last result of Proposition~\ref{prop: propagation formula}.

Proposition~\ref{prop: propagation support semiclassical measure} is a consequence of 
Proposition~\ref{prop: propagation formula}. To prove that
we follows works \cite{Gerard-1991, GL-1993, Leb, lebeau, Burq-1997, Burq-Lebeau2001, Miller-2000}. To be complete, we give the proof in Annex~\ref{sec: Precise description of Geometric Control Condition}. 
This proof follows the one  given in \cite{Cor-Rob}.

 %
 %
 
 \appendix

%
%

\section{Geometric Control Condition and propagation}
	\label{sec: Precise description of Geometric Control Condition}

%
%

\subsection{Geometry}\label{sec: Geometry}
Here we give the geometrical notion we use in this article. This framework comes from Melrose and 
Sj\"ostrand~\cite{MS-1978,MS-1982} and 
the reader may also find in 
H\"ormander~\cite[Chapter 24]{HormanderV3-2007} more informations and proofs. The characterisation of symplectic sub-manifold 
is probably classical and more details can be found in Grigis~\cite{Grigis-1976}.

\medskip
\textbf{Assumption on the symbol.}
We recall the definition of  the symbol of $P$ given in Proposition~\ref{prop: propagation formula},
\begin{align}\label{def: symbol h2P-1}
p(x,\xi)=\sum_{1\le j ,k\le d}g^{jk}(x)\xi_j\xi_k -1,
\end{align}
where $g^{jk}$ are $\Con^\infty(\overline{ \Omega})$.
Locally in a \nhd of the boundary we can define $\Omega $ by $\varphi>0$ with $d\varphi\ne 0$. 
We can also choose coordinates  (i.e. \emph{normal geodesic coordinates}) such that
$\varphi(x)=x_d$ and $p(x,\xi)= \xi_d^2+R(x,\xi')-1$ where $x=(x',x_d) $ and $\xi=(\xi',\xi_d)$. 

\medskip
\textbf{Symplectic sub-manifold $\Sigma$.}
We can define a symplectic manifold $\Sigma$,
 contained into $T^*\R^d\cap\{ \varphi=0 \}$. We set $\Sigma=\{  (x,\xi),\ \varphi(x)=0 \text{ and } 
 \{\varphi, p\} (x,\xi)=0\}$. The set $\Sigma$ is a 
 symplectic manifold as $\{\varphi  ,\{\varphi,p\}\}\ne0$. In coordinates $(x',x_d)$, we have 
 $\Sigma=\{x_d=0,\xi_d=0\}$, this manifold is isomorphic to 
 $T^*\pd\Omega$ and described by coordinates $(x',\xi')$. 
 
 The Hamiltonian vector field $H_p$ is not a 
 vector field on $\Sigma$, but for all $X$ a vector field on $T^*\R^d$, we can find
 unique fonctions $\alpha$ and $\beta$ such $X+\alpha H_\varphi+\beta H_{\{\varphi, p\} }$ is a vector 
 field on  $\Sigma$. For $H_p$ we denote the associated vector $H'_p$
 and an elementary computation leads to 
 \begin{align*}
 H'_p=H_p+\frac{ \{ p ,\{ p ,\varphi  \} }{ \{ \varphi ,\{ \varphi  , p \}}  H_\varphi .
 \end{align*}
 In coordinates  $(x',x_d)$,   $H'_p$ only depends  on $R$ and we have
  \begin{align*}
   H'_p=H'_R=\sum_{j=1}^{d-1} (\pd_{\xi_j}R(x',0,\xi')\pd_{x_j} - \pd_{x_j}R(x',0,\xi' )  \pd_{\xi_j}).
  \end{align*}
In particular the integral curves associated with $H'_p$ starting from a point into $\Sigma$ stay into $\Sigma$. In coordinates $(x',x_d)$, we denote the integral curve 
starting from $(x',\xi')$, either $\gamma_g(s; x',\xi')$, either  $\gamma_g( x',\xi')$, if $s$ is implicit or  $\gamma_g(s)$, if $(x',\xi')$ is implicit.

\medskip
\textbf{Description and topology of $T^*_b\Omega$.}
Let $T^*_b\Omega=T^*\pd\Omega\cup T^*\Omega$,
 this set is equipped with the following topology.

  First if $\rho\in T^*\Omega$, 
  a set $V$ is a \nhd of $\rho$ if $V$ contains an open set $W$ of 
  $ T^*\Omega$  such that $\rho\in W$.  
  
  Second if $\rho=(x_0',\xi_0')\in T^*\pd\Omega$, a set $V$ is a 
  \nhd of $\rho$ if $V$ contains a set
  \begin{align*}
 &  \{ (x',\xi')\in T^*\pd\Omega,\ |x'_0-x'|+|\xi'_0-\xi'|\le\eps \}\\
  &  \cup \{ (x,\xi)\in T^*\Omega,\ 
   |x'_0-x'|+|\xi'_0-\xi'|\le\eps \text{ and } (x_d,\xi_d)\in
  U\cap \{ x_d>0 \} \},
  \end{align*}
 where $\eps>0$ and $U$ is a \nhd of $\{ (x_d,\xi_d)\in\R^2, \ x_d=0\}$ in $\R^2$.
 
 In local coordinates where $\Omega $ is defined by $x_d>0$, we define 
 $j: T^*\overline \Omega \to T^*_b\Omega$
 by $j(x,\xi)=(x,\xi)$ if $x\in\Omega$, and $j(x,\xi)=(x',\xi')$ if $x_d=0$. 
 The map $j$ is continuous for the topology given above.
  We can define more intrinsically  $j$ with the previous notation where $\Omega$ is 
  given by $\varphi(x)>0$. For 
  $(x,\xi)\in T^*\overline \Omega $, $j(x,\xi)=(x,\xi)$ if $x\in\Omega$ and 
  $j(x,\xi)=(x, \xi dx-(\{p,\varphi\}/H^2_\varphi p)d\varphi)$, if 
  $\varphi(x)=0$. We verify, in this last case that $j(x,\xi)\in \{  \varphi=\{p,\varphi\}=0 \}$, 
  as $\{p,\varphi\}(x,d\varphi)
  =\{\varphi,\{ \varphi,p\}\}$.

  As usually we define the map $\pi: T^*_b\Omega\to \overline \Omega$, in local coordinates, 
  as $\pi(x,\xi)=x$, if $(x,\xi)\in T^*\Omega$ and $\pi(x',\xi')=(x',0)$, if $(x',\xi')\in T^*\pd\Omega$.
  
\medskip
\textbf{Bicharacteristic and generalized flow.}

\medskip

For $(x,\xi)\in T^*\R^d$, we denote by $\gamma(s;x,\xi)$ the integral curve of 
$H_p$ starting from $(x,\xi)$. We use the same short notation $\gamma(s)$ 
and $ \gamma(x,\xi)$ as above.

Now we define the generalized bicharacteristic denoted by $\Gamma(s,\rho)$ for 
$\rho\in T^*_b\Omega$. To describe this curve in a 
\nhd of the boundary we use the coordinates 
$(x',x_d,\xi',\xi_d)$ and we identify $\Sigma'$ and $T^*\pd\Omega$ and locally 
$\Omega=\{x\in \R^d , \ x_d>0 \}$.  
Moreover, we assume 
$\rho\in \text{char} (P) = \{(x,\xi)\in T^*\Omega ,\ p(x,\xi)=0  \}\cup 
\{(x',\xi')\in T^*\pd\Omega,\ R(x',0,\xi')-1\le0  \}$.

  Now we define the curve $\Gamma(s,\rho)$ locally for each $(s_0,\rho)$ and 
  we use the group property of the flow, namely 
  $\Gamma(s+t,\rho)=\Gamma(s,\Gamma(t,\rho))$ 
  to extend this function for every $s\in\R$.
  
  \medskip
For  $\Gamma(s_0,\rho)\in  
  T^*\Omega$, $\Gamma(s,\rho)=
  \gamma(s-s_0;\Gamma(s_0,\rho))$ if $\gamma(s-s_0;\Gamma(s_0,\rho))\in T^*\Omega$ for $s-s_0$ in an
  interval containing 0. In particular, 
  this defined $\Gamma(s,\rho)$ at least 
  for $s$ in a \nhd of $s_0$ as $\gamma(s-s_0;\Gamma(s_0,\rho))$ stay in $T^*\Omega $ for small $|s-s_0|$. 
  Observe that $p(\gamma(s-s_0; \Gamma(s_0,\rho)))=0$.
  
  \medskip
  For $\rho=(x_0',\xi'_0)\in T^*\pd\Omega$, we have to distinguish different cases, first if $R(x'_0,0,\xi'_0)<1$ 
  and second if $R(x'_0,0,\xi'_0)=1$, where
  $\Gamma(s,\rho)$ depends on the properties of $\gamma(s; x'_0, 0, \xi'_0,0)$. In what follows we only 
  define the flow in a \nhd of $s=0$. We can extend the flow by the group property.
  
  We begin to treat hyperbolic points and we recall 
  \(
  {\mathcal H}=\{ (x'_0,\xi'_0)\in T^*\pd\Omega,\ R(x'_0, 0,\xi'_0)<1 \}.
  \)
If $R(x'_0,0,\xi'_0)<1$, let $\xi^\pm=\pm\sqrt{1-R(x'_0,0,\xi'_0)}$. Let 
  $\gamma(s;x_0,\xi_0)=(x(s;x_0,\xi_0), \xi(s;x_0,\xi_0))$, as $\dot x_d=2\xi_d$, we have 
  $x_d(s;x'_0,0,\xi'_0,\xi^+)>0$ for $s>0$ sufficiently small, and  $x_d(s;x'_0,0,\xi'_0,\xi^-)>0$ for 
  $s<0$ sufficiently small. Then we set 
  $\Gamma(0,\rho)=\rho$, $ \Gamma(s,\rho)= \gamma(s;x_0',0,\xi_0',\xi^+)$ for $s>0$ sufficiently small and 
  $\Gamma(s,\rho)= \gamma(s;x_0',0,\xi_0',\xi^-)$ for $s<0$ sufficiently small. Observe that for 
  $s\ne0$ sufficiently small,
  $\Gamma(s,\rho)\in T^*\Omega$ and $p(\Gamma(s,\rho))=0$. 
%
%
    \begin{defi}[Finite contact with the boundary]
  	\label{def: contact fini}
  Let $(x'_0,\xi'_0)$ be such that $R(x'_0,0,\xi'_0)=1$. We say that  the  bicharacteristic $ \gamma(s;x'_0,0,\xi'_0,0)=\gamma(s)$ 
  \emph{does not have  an infinite contact with the boundary} if 
there exists $ k\in \N,\ k\ge 2,\  \alpha\ne0$ such that  $ x_d(s;x'_0,0,\xi'_0,0)= x_d(s)= 
  \alpha s^k+{\mathcal  O}(s^{k+1})$ in a \nhd of $s=0$. We denote by ${\mathcal G}^k$ the set of such points.
  
For   \(k=2\) we distinguish two cases.
\begin{itemize}
\item The  diffractive points, and we denote 
\[
{\mathcal G}_d=  \{ (x',\xi')\in  T^*\pd\Omega,  \  R(x',0,\xi')=1,\  \pd_{x_d}R(x',0,\xi')<0 \}.
\]
In this case $\alpha$ defined above is positive.
\item The gliding points, and we denote  
\[
{\mathcal G}_g=  \{ (x',\xi')\in  T^*\pd\Omega,  \  R(x',0,\xi')=1,\  \pd_{x_d}R(x',0,\xi')>0 \}.
\]
In this case $\alpha$ defined above is negative.
\end{itemize}
 \end{defi}
      \begin{remark}
    By Taylor's theorem and 
 as $x_d(0)=0$ and 
  $\dot x_d(0)=2\xi_d(0)=0$, we always have $x_d(s)={\mathcal O}(s^2)$.
  \end{remark} 
We have four cases to treat.
\begin{enumerate}
\item[-]   $k$ even, $\alpha>0$. In this case $x_d(s)>0$ for $s\ne0$ sufficiently small. 
We define $\Gamma(0,\rho)=\rho$ and $\Gamma(s,\rho)=\gamma(s;x_0',0,\xi'_0,0)\in T^*\Omega$ 
for $s\ne0$ sufficiently small.
\item[-] $k$ even, $\alpha<0$. In this case $x_d(s)<0$ for $s\ne0$ sufficiently small. 
We define $\Gamma(s)=\gamma_g(s,\rho)\in T^*\pd\Omega$ for $s$ sufficiently small.
\item[-] $k$ odd, $\alpha>0$. In this case $x_d(s)>0$ for $s>0$ sufficiently small and   
$x_d(s)<0$ for $s<0$ sufficiently small. We define 
$\Gamma(s,\rho)=\gamma_g(s,\rho)\in T^*\pd\Omega$ for $s\le0$ sufficiently small, and $\Gamma(s,\rho)=\gamma(s;x_0',0,\xi'_0,0)   \in T^*\Omega$ for $s>0$, sufficiently small.
\item[-] $k$ odd, $\alpha<0$. In this case $x_d(s)<0$ for $s>0$ sufficiently small and   $x_d(s)>0$ for $s<0$ sufficiently small. We define 
$\Gamma(s,\rho)=\gamma(s;x_0',0,\xi'_0,0)  \in T^*\Omega$ for $s<0$ sufficiently small, and $\Gamma(s,\rho)=\gamma_g(s,\rho)\in T^*\pd\Omega$ for $s\ge0$, sufficiently small.
\end{enumerate}
This local description of $\Gamma(s)$ allows to extend $\Gamma(s) $ for every $s\in\R$. 
The function $\Gamma(s, \rho) $ defined on $\R\times\text{char}(P)$ is 
continuous for the topology of $\R\times T^*_b\Omega$, where the topology 
of $T^*_b\Omega$ is defined above. 

%
%
%

\subsection{Geometric Control Condition (GCC)}
	\label{Sec: Geometric Control Condition (GCC)}

We may give here a precise definition of (GCC).
%
%
\begin{defi}
	\label{def: precise GCC}
Let  $\Omega$ be a smooth open domain in $\R^d$.
Let $\omega$ be an open subset of $\Omega$. We assume the bicaracteristics do not have 
contact of infinite order with
 the boundary (see Definition~\ref{def: contact fini}). We say that $\omega $ satisfies (GCC) if  for every  
$\rho\in T^*_b\Omega$ there exists  $s>0$ such that  $\pi(\Gamma(s,\rho))\in \omega$.
\end{defi}
%
%
\begin{remark}
By symmetry the assumption $s>0$ is not relevant as if $\rho=(x,\xi) $ and $\tilde \rho=(x,-\xi)$ we have 
$\Gamma(s,\rho)=\Gamma(-s,\tilde \rho)$.

By compactness argument if (GCC) is satisfied, there exist $s_0$ such that for every $\rho\in T^*_b\Omega$, there
exist $s\in[0,s_0]$ such that $\pi(\Gamma(s,\rho))\in \omega$.  The value of $s_0$ play a role for control problems 
as $s_0$ is related with the minimal time of control, but is not relevant for stabilization problems.
\end{remark}

 %
 %
 %
 \subsection{Properties of measure}
 
In this section, we give several properties on measure $m$, in particular in \nhd of boundary. This description is 
related to  the property of point in $T^*(\pd \Omega)$.

In next lemma, we decompose the measure into an interior measure and a measure  supported at boundary. Moreover this last measure is supported
on $\xi_d=0$.
\begin{lem} \label{lem: measure on boundary}
There exists a non negative Radon  measure $m^\pd$ on $x_d=\xi_d=0$ such that $m=1_{x_d>0}m +m^\pd\otimes \delta _{x_d=0}\otimes\delta_{\xi_d=0}$. 
Furthermore $m^\pd$ is supported on $R(x',0,\xi')-1=0$.
\end{lem}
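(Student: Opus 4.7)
The decomposition itself is elementary: set $m^{\text{int}}:=\mathbf{1}_{x_d>0}\,m$ and $m^{\text{bdy}}:=\mathbf{1}_{x_d=0}\,m=m-m^{\text{int}}$; both are non-negative Radon measures summing to $m$, with $m^{\text{bdy}}$ supported on the boundary slice $\{x_d=0\}$. By Proposition~\ref{Prop: measure support and form}, $m$ is carried by the characteristic set $\{\xi_d^2+R(x,\xi')-1=0\}$, so $m^{\text{bdy}}$ is supported on $\{x_d=0,\ \xi_d^2+R(x',0,\xi')=1\}$. Thus the lemma reduces to the extra support condition $m^{\text{bdy}}\subset\{\xi_d=0\}$: combined with the characteristic relation this forces $R(x',0,\xi')=1$ on $\supp m^{\text{bdy}}$, and $m^{\text{bdy}}$ factorises as $m^\partial\otimes\delta_{x_d=0}\otimes\delta_{\xi_d=0}$ with $m^\partial\ge 0$ a Radon measure on $T^*\partial\Omega$ supported on $\{R=1\}$.

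To localise, I split a test symbol using a tangential partition of unity $1=\chi_{\mathcal H}+\chi_{\mathcal G}+\chi_{\mathcal E}$ keyed to the sign of $R(x',0,\xi')-1$, exactly as in the proof of Lemma~\ref{lem: first est trace}. In the elliptic region $\{R>1\}$ the equation $\xi_d^2=1-R$ has no real solutions, hence $m^{\text{bdy}}$ vanishes there automatically. In the glancing region $\{R=1\}$ the characteristic set meets $\{x_d=0\}$ only along $\{\xi_d=0\}$, so any boundary contribution is already of the required form. The real content is the hyperbolic region $\{R<1\}$, where $\xi_d=\pm\sqrt{1-R}\ne 0$ on the characteristic set and one must show there is no atomic contribution at $\{x_d=0\}$.

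For the hyperbolic part the key input is estimate~\eqref{est: hyperbolic final}, obtained in the proof of Lemma~\ref{lem: first est trace}, which gives that after microlocalisation by $\ops(\chi_{\mathcal H})$ the trace $(r_h)_{|x_d=0}$ is uniformly bounded in $L^2$. Combined with the first-order energy estimate of Lemma~\ref{lem: first order hyperbolic} applied to the diagonalising variables $z_1,z_2$ of the two-by-two system~\eqref{eq: syst semi-class 2 by 2 hyp}, whose principal symbols $\pm\lambda$ are real, this propagates the boundary control into a uniform $L^\infty_{x_d}(L^2_{x'})$ bound for the hyperbolic microlocalisation of $r_h$ on a thin layer $\{0\le x_d\le\delta\}$. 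Consequently, for any $\psi\in C_c^\infty(\R)$ with $\psi(0)=1$ and any $c(x',\xi')\in C_c^\infty$ with $\supp c\subset\supp\chi_{\mathcal H}$,
\[
\Bigl|\int_\Omega\psi(x_d/\epsilon)\,c(x',\xi')\,|\ops(\chi_{\mathcal H})r_h|^2\,dx\Bigr|\;\lesssim\;\epsilon
\]
uniformly in $h$. Letting $h\to 0$ inside the defining relation~\eqref{def: measure} and then $\epsilon\to 0$ by dominated convergence yields $\langle m^{\text{bdy}},c\rangle=0$ for every such $c$, so the hyperbolic piece of $m^{\text{bdy}}$ vanishes. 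Summing the three regimes gives $m^{\text{bdy}}\subset\{\xi_d=0\}$, and the factorisation follows.

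The main obstacle I anticipate is upgrading the pointwise-at-$x_d=0$ trace bound~\eqref{est: hyperbolic final} into the uniform slice bound on $\{x_d=s\}$ for $0\le s\le\delta$ used above: one has to rerun the energy estimate of Lemma~\ref{lem: first order hyperbolic} for the first-order scalar equations satisfied by $z_1,z_2$, this time with the endpoint at an arbitrary horizontal slice rather than at the boundary, and to exploit that the $\ops(\lambda)$ appearing there has a real principal symbol so that the corresponding transport norm $|z_k(s,\cdot)|_{L^2_{x'}}$ is controlled by its value at $s=0$ plus source terms bounded in $L^2(\Omega)$. Once this uniform slice bound is in hand, the Hardy-type integration $\int_0^\epsilon|\ops(\chi_{\mathcal H})r_h(\cdot,x_d)|_{L^2_{x'}}^2\,dx_d\lesssim\epsilon$ is immediate and the remainder of the argument is routine.
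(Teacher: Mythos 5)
Your proof is correct in its overall strategy, but it takes a genuinely different route from the paper's. The paper obtains the lemma in one stroke from the boundary propagation formula \eqref{prop: propagation formula 3}: testing with $a^\eps=\eps\chi(x_d/\eps)\ell(x,\xi')$, where $\chi(0)=0$ and $\chi'(0)=1$, the trace term on the right-hand side vanishes identically because $a^\eps|_{x_d=0}=0$, while $H_p(\xi_d a^\eps)$ stays uniformly bounded on the characteristic set and converges pointwise to $2\xi_d^2\,\ell\,1_{x_d=0}$; dominated convergence then gives $\langle m,\xi_d^2\ell\,1_{x_d=0}\rangle=0$, i.e.\ $1_{x_d=0}m$ is carried by $\{\xi_d=0\}$, with no hyperbolic/glancing/elliptic case analysis at all. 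You instead go back to the $h$-level and only have real work to do in the hyperbolic region, where you must rule out mass of $1_{x_d=0}m$ at $\xi_d=\pm\sqrt{1-R}\neq0$. Your plan there is sound: the trace bound \eqref{est: hyperbolic final} controls $(z_k)_{|x_d=0}$, the transport equations $hD_dz_k+(-1)^k\ops(\lambda)z_k=hH_k$ with real $\lambda$ give, by the same integration by parts as in Lemma~\ref{lem: first order hyperbolic} run up to an arbitrary slice plus Gronwall, $\sup_{0\le s\le\delta}|z_k(s,\cdot)|^2\lesssim|z_k(0,\cdot)|^2+\|z_k\|^2+\|H_k\|^2$, and elliptic inversion of $\ops(\lambda)$ on the microsupport recovers $|\ops(\chi_H)r_h(s,\cdot)|\lesssim 1+h|r_h(s,\cdot)|$. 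One small correction: because of that last remainder, the layer estimate is $\int_0^{2\eps}|\ops(\chi_H)r_h(s,\cdot)|^2\,ds\lesssim\eps+h^2\|r_h\|^2$, not a bound uniform in $h$ as you stated; this is harmless since you take $h\to0$ before $\eps\to0$, but it should be said. In exchange for re-deriving this slice estimate (which is not in the paper), your argument uses only the Section~\ref{Sec: Introduction of a semiclassical measure} machinery and is independent of Proposition~\ref{prop: propagation formula}, whereas the paper's proof is shorter but leans on the full boundary propagation identity.
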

\begin{proof}
We apply Formula~\eqref{prop: propagation formula 3}.
 We observe that $H_p=2\xi_d\pd_{x_d}-(\pd_{x_d}R)\pd_{\xi_d}+H'_R$ where $H'_R=
\ds \sum_{j=1}^{d-1}\big((\pd_{\xi_j}R)\pd_{x_j}-(\pd_{x_j}R)\pd_{\xi_j}\big)$. We have $H_p(a\xi_d )=2\xi_d^2(\pd_{x_d}a) -(\pd_{x_d}R) a+\xi_dH'_R a$. Let 
$a=a^\eps=\eps\chi(x_d/\eps)\ell(x,\xi')$, where $\chi\in\Con_0^\infty(\R)$, such that $\chi(0)=0$ and $\chi'(0)=1$, $\ell\in\Con_0^\infty(\R^d\times\R^{d-1})$. We have
\begin{align*}
H_p(\xi_d a^\eps)& =2\xi_d^2\chi'(x_d/\eps)\ell(x,\xi')+2\eps \chi(x_d/\eps)
\xi_d^2\pd_{x_d}\ell-(\pd_{x_d}R)\eps\chi(x_d/\eps)\ell(x,\xi')
+\xi_d\eps\chi(x_d/\eps)H'_R \ell.
\end{align*}
Clearly $H_p(\xi_da^\eps)$ is uniformly bounded on the support of $m$
and $H_p(\xi_d a^\eps)\to 
2\xi_d^2\chi'(0)\ell 1_{x_d=0}$ everywhere as $\eps\to0$.
Then by Lebesgue's dominated convergence theorem we have 
$\est{m,H_p(\xi_da^\eps)}\to\est{m, 2\xi_d^2 \ell 1_{x_d=0}}$ as $\eps\to0$. As $\chi(0)=0$, the 
right hand side of~\eqref{prop: propagation formula 3} is 0 for every $h$. 
Then $\est{m, 2\xi_d^2 \ell 1_{x_d=0}}=0$. 
This means that $1_{x_d=0}m$ is supported on $\xi_d=0$. We denote $m^\pd$ the measure $1_{\xi_d=0}1_{x_d=0}m$. 
As $m=1_{x_d>0}m+1_{x_d=0}m$, 
we have  $m=1_{x_d>0}m+m^\pd\otimes \delta _{x_d=0}\otimes\delta_{\xi_d=0}$. We have by 
Proposition~\ref{Prop: measure support and form}, $(\xi_d^2+R(x,\xi')-1)1_{x_d=0}m=0 $, 
then $(R(x',0,\xi')-1)m^\pd=0$. This gives the conclusion of Lemma.
\end{proof}

 %
 %
 %

The Hamiltonian of the interior measure is a priori a distribution of order one supported 
on $x_d=0$. The following lemma says that this quantity is a measure if the 
Hamiltonian vector field is transverse to the boundary.
\begin{lem}\label{lem: Hp mu xd positive is a measure}
We have $H_p(m 1_{x_d>0})=\delta_{x_d=0}\otimes m_0$, where  $m_0$ is a distribution  
of order 1 defined on $x_d=0$.
Moreover, in a \nhd where $H_p x_d>0$, $m_0$ is a non negative Radon measure, and in a \nhd where $H_p x_d<0$, $m_0$ is a non positive Radon measure.
\end{lem}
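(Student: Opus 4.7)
The plan is to exploit the propagation relations of Proposition~\ref{prop: propagation formula} and the boundary decomposition from Lemma~\ref{lem: measure on boundary}. First I would verify that the distribution $H_p(m 1_{x_d>0})$ is supported in $\{x_d=0\}$: for $\phi\in\Con_c^\infty$ with $\supp\phi\subset\{x_d>0\}$,
\begin{equation*}
\est{H_p(m 1_{x_d>0}),\phi}=-\int_{x_d>0}H_p\phi\,dm=\est{\{p,m\},\phi}=0
\end{equation*}
by the first item of Proposition~\ref{prop: propagation formula}. Combined with $\supp(m 1_{x_d>0})\subset\{x_d\ge 0\}$, this forces the support into $\{x_d=0\}$.

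Next, to identify $m_0$ and bound its order, I would use the splitting $m=m 1_{x_d>0}+m^\pd\otimes\delta_{x_d=0}\otimes\delta_{\xi_d=0}$ to write, for any test function $\phi$,
\begin{equation*}
\est{H_p(m 1_{x_d>0}),\phi}=\est{\{p,m\},\phi}+\int(H_p\phi)(x',0,\xi',0)\,dm^\pd(x',\xi').
\end{equation*}
The boundary term, upon expanding $H_p\phi=2\xi_d\pd_{x_d}\phi-(\pd_{x_d}R)\pd_{\xi_d}\phi+H'_R\phi$ at $(x_d,\xi_d)=(0,0)$, depends only on $\phi(x',0,\xi',0)$ and $\pd_{\xi_d}\phi(x',0,\xi',0)$. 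For the interior term I would decompose $\phi=\phi(x,\xi',0)+\xi_d\phi_1(x,\xi)$ and apply \eqref{prop: propagation formula 2} to the tangential part, then split $\phi_1=\phi_1(x,\xi',0)+\xi_d\phi_2$ and apply \eqref{prop: propagation formula 3} to $\xi_d\phi_1(x,\xi',0)$; the remainder $\xi_d^2\phi_2$ is absorbed by the fact that $\{p,m\}$ is supported on $\{p=0\}$, where $\xi_d^2=1-R(x,\xi')$. The resulting pairing depends only on $\phi|_{x_d=0}$ and $\pd_{\xi_d}\phi|_{x_d=0,\xi_d=0}$, so $H_p(m 1_{x_d>0})$ takes the form $\delta_{x_d=0}\otimes m_0$ with $m_0$ a distribution of order at most one on $\{x_d=0\}$.

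For the sign statement, I would regularize. Fix $\chi_\eta\in\Con^\infty(\R)$ non-decreasing and flat at $0$ (so $\chi_\eta^{(k)}(0)=0$ for all $k\ge 0$), equal to $0$ for $x_d\le 0$ and to $1$ for $x_d\ge\eta$. For $\phi\ge 0$ supported in a conic neighborhood $V$ where $H_p x_d=2\xi_d>0$ on $V\cap\supp m$, a product-rule calculation yields
\begin{equation*}
\est{H_p(\chi_\eta m),\phi}=\est{\{p,m\},\chi_\eta\phi}+\int 2\xi_d\chi_\eta'(x_d)\phi\,dm.
\end{equation*}
Since $\chi_\eta$ is flat at $0$ to all orders and $\{p,m\}$ is a finite-order distribution supported on $\{x_d=0\}$, the first term on the right vanishes; the second is non-negative because $\chi_\eta'\ge 0$, $\xi_d>0$ on $V\cap\supp m$, $\phi\ge 0$ and $m\ge 0$. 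As $\eta\to 0$ the left side converges to $\est{m_0,\phi|_{x_d=0}}$, hence $\est{m_0,\phi|_{x_d=0}}\ge 0$ for every such $\phi$, so $m_0$ is a non-negative Radon measure on $V$. The case $H_p x_d<0$ is symmetric.

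The main technical obstacle is the step-two treatment of the remainder $\xi_d^2\phi_2$: formula \eqref{prop: propagation formula 3} is stated only for tangential symbols, so the substitution $\xi_d^2=1-R$ on $\supp\{p,m\}$ must be justified by the finite order of $\{p,m\}$ (which follows from $m$ being a Radon measure and $H_p$ being first order). A secondary subtlety is matching this order bound with the flatness of $\chi_\eta$ in step three, so that $\est{\{p,m\},\chi_\eta\phi}\to 0$ genuinely holds.
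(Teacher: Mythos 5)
The support argument is correct and matches the paper. Your sign argument is also correct and is a genuinely different route from the paper's: the paper straightens $H_p$ into $\pd_s$ by the flow map and solves $\pd_s(m 1_{s>0})=\delta_{s=0}\otimes m_0$ explicitly, whereas you regularize $1_{x_d>0}$ by a flat non\-decreasing cutoff and read the sign off from $\est{m,2\xi_d\chi_\eta'\phi}\ge 0$; this is more elementary, and in a region where $H_px_d\ne 0$ on $\supp m$ it even yields the product structure $\delta_{x_d=0}\otimes m_0$ for free, since a signed measure carried by $\{x_d=0\}$ has no $\delta'_{x_d=0}$ component.

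The gap is in the structure step -- precisely the part of the lemma needed at glancing points, where the sign argument is unavailable. You Taylor\-expand in the wrong variable. Writing $\phi=\phi(x,\xi',0)+\xi_d\phi_1(x,\xi',0)+\xi_d^2\phi_2(x,\xi)$, the first two pieces are indeed handled by \eqref{prop: propagation formula 2} and \eqref{prop: propagation formula 3}, but the remainder is not ``absorbed.'' The substitution $\xi_d^2\mapsto 1-R$ is legitimate, though not for the reason you give: it holds because $\est{H_pm,p\,g}=-\est{m,p\,H_pg}=0$, $m$ being a \emph{measure} carried by $\{p=0\}$ and $H_pp=0$; finite order of $H_pm$ alone would not suffice, as an order\-one distribution pairs non\-trivially with functions vanishing on its support. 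But the substitution returns $\est{H_pm,(1-R)\phi_2}$ with $\phi_2$ still depending on $\xi_d$ -- a pairing of exactly the type you started from. Iterating gives $\est{H_pm,(1-R)^N\phi_{2N}}$, which does not tend to $0$ (on $\supp m$ one only has $0\le 1-R\le 1$, with $1-R=1$ at normal directions, and the Taylor coefficients of a general test function do not decay), so the scheme never closes. What must actually be shown is that the pairing is insensitive to $\pd_{x_d}\phi|_{x_d=0}$, and for that one localizes in $x_d$: since $m$ is a measure and $H_p$ is first order, $H_p(m1_{x_d>0})$ has order $\le 1$ and supported in $\{x_d=0\}$ equals $\delta_{x_d=0}\otimes m_0+\delta'_{x_d=0}\otimes m_1$; one then kills $m_1$ by testing against $\eps\chi(x_d/\eps)b(x,\xi)$ with $\chi(0)=0$, $\chi'(0)=1$, computing $H_p\big(\eps\chi(x_d/\eps)b\big)=2\xi_d\chi'(x_d/\eps)b+\eps\chi(x_d/\eps)H_pb\to 2\xi_d 1_{x_d=0}b$ boundedly and using $\est{m1_{x_d>0},1_{x_d=0}\,\cdot}=0$. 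Your $\xi_d$-expansion should be replaced by this $x_d$-localization.
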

\begin{proof}
The support of $H_p(m 1_{x_d>0})$ as a distribution if $x_d\ge0$ and $H_p(m)=0$ on $x_d>0$. Then $H_p(m 1_{x_d>0})$ is supported on $x_d=0$.
This implies that there exist $n\ge0$ and $m_j$ distributions on $x_d=0$ such that $H_p(m 1_{x_d>0})= \ds \sum_{j=0}^n\delta_{x_d=0}^{(j)}\otimes m_k$.
Suppose that $n\ge 1$  and 
let $\chi\in\Con_0^\infty(\R)$, be such that $\chi^{(k)}(0)=0$ for $k=0,\dots n-1$ and $\chi^{(n)}(0)=1$. Let 
$b\in \Con_0^\infty(\R^d\times\R^{d-1})$,
we have 
\begin{align}\label{eq: to calculate Hp mu}
-\est{m 1_{x_d>0},H_p\big(\eps^n\chi(x_d/\eps)b(x,\xi')\big)}& = \est{H_pm, \eps^n\chi(x_d/\eps)b(x,\xi')} \notag  \\
&=\est{\sum_{j=0}^n\delta_{x_d=0}^{(j)}\otimes m_k, \eps^n\chi(x_d/\eps)b(x,\xi')} \notag \\
&=(-1)^n \est{m_n, b(x',0,\xi')}.
\end{align}
We also have 
\begin{align*}
H_p\big(\eps^n\chi(x_d/\eps)b(x,\xi')\big)= 2\xi_d\eps^{n-1}\chi'(x_d/\eps) b(x,\xi')+\eps^n\chi(x_d/\eps)H_p b(x,\xi'),
\end{align*}
which is bounded uniformly with respect $\eps$ and supported in a fixed compact set.
Then if $n\ge 2$,  $H_p\big(\eps^n\chi(x_d/\eps)b(x,\xi')\big)\to0$ everywhere as $\eps\to0$ 
and~\eqref{eq: to calculate Hp mu}, Lebesgue's dominated convergence theorem imply that $m_n=0$ for $n\ge2$.
If $n=1$,  $H_p \big(\eps\chi(x_d/\eps)b(x,\xi')\big)\to  2\xi_d 1_{x_d=0}b(x,\xi')$ everywhere as $\eps\to 0$. Lebesgue's dominated convergence 
theorem and~\eqref{eq: to calculate Hp mu} imply that
\begin{align*}
\est{m 1_{x_d>0},2\xi_d 1_{x_d=0}b(x,\xi')}=\est{m_1,b(x',0,\xi')},
\end{align*}
as $ \est{m 1_{x_d>0},2\xi_d 1_{x_d=0}b(x,\xi')}=0$ we find that $m_1=0$. Then we have 
$  H_p(m 1_{x_d>0})=\delta_{x_d=0}\otimes m_0$, where $m_0$ is a distribution of order  1.

If  $H_px_d\not=0$, let $\gamma(s;x,\xi)=(x(s; x',\xi), \xi(s;x',\xi))$ be the flow associated with   $H_p$. We verify that the map
$(s,x',\xi)\to (x(s; x',\xi), \xi(s;x',\xi))$ locally is one to one and transforms $\pd_s$ in $H_p$. Moreover, $s=0$ is transformed in 
$x_d=0$ and if $H_p x_d>0$, $s>0$ is transformed in $x_d>0$, if  $H_p x_d<0$, $s<0$ is transformed in $x_d>0$. In coordinates $(s,x',\xi)$ the equation
$  H_p(m 1_{x_d>0})=\delta_{x_d=0}\otimes m_0$ is transformed in $\pd_s (m 1_{s>0})=\delta_{s=0}\otimes m_0$ if $H_p x_d>0$ and 
$\pd_s (m 1_{s<0})=\delta_{s=0}\otimes m_0$ if $H_p x_d<0$, where we keep the notations $m, m_0$ in variables  $(s,x',\xi)$  for the images of 
$m, m_0$. If $H_p x_d>0$, we have $m 1_{s>0}=(1_{s>0}ds)\otimes m_0$ and if $H_p x_d<0$, we have $m 1_{s<0}=-(1_{s<0}ds)\otimes m_0$. As 
$m$ is non negative, we obtain that  $m_0$ is a nonnegative measure in the first case and a nonpositive measure in the second case.
\end{proof}

 %
 %
 %

At the hyperbolic region the measure $m_0$ has a particular structure given by next lemma. 
\begin{lem}
	\label{lem: measure hyperbolic points}
Let $(x_0',\xi_0')$ be a hyperbolic point (i.e. $R(x'_0,0,\xi'_0)<1$). Locally in a \nhd of  $(x_0',\xi_0')$, there exists $m^+$ a non negative 
measures on $\R^{d-1}_{x'}\times \R^{d-1}_{\xi'}$ such that 
\[
m_0=m^+\otimes \delta_{\xi_d=\sqrt{1-R(x',0,\xi')}}-m^+\otimes \delta_{\xi_d=-\sqrt{1-R(x',0,\xi')}}.
\]
\end{lem}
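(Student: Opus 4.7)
The plan is to decompose $m$ locally onto the two characteristic sheets, use the interior propagation $H_p m = 0$ to express $m_0$ in terms of boundary traces of the two components, and finally use the tangential propagation formula~\eqref{prop: propagation formula 2} to identify those traces.

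I work in geodesic coordinates so that $\Omega = \{x_d > 0\}$ and $p(x, \xi) = \xi_d^2 + R(x, \xi') - 1$. On a small neighborhood $U$ of the hyperbolic point where $R < 1$, set $\lambda(x, \xi') = \sqrt{1 - R(x, \xi')} > 0$. The characteristic variety $\{p = 0\}$ splits over $U$ into two disjoint smooth sheets $\Sigma^\pm = \{\xi_d = \pm \lambda\}$. By Proposition~\ref{Prop: measure support and form} $m$ is non-negative and supported on $\{p = 0\}$, so I can write
\[
m = \mu^+ \otimes \delta_{\xi_d - \lambda(x, \xi')} + \mu^- \otimes \delta_{\xi_d + \lambda(x, \xi')}
\]
with $\mu^\pm$ non-negative Radon measures on $\R^d_x \times \R^{d-1}_{\xi'}$ supported in $\{x_d \ge 0\}$. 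The part $m^\partial \otimes \delta_{x_d = 0} \otimes \delta_{\xi_d = 0}$ of Lemma~\ref{lem: measure on boundary} vanishes near the hyperbolic point because $m^\partial$ is supported on $R = 1$, so $m = 1_{x_d > 0} m$ locally.

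Restricted to each sheet, $H_p$ pushes forward to the vector field $V^\pm = \pm 2\lambda \partial_{x_d} + H'_R$ on the base coordinates $(x, \xi')$, and the interior identity $H_p m = 0$ becomes $(V^\pm)^* \mu^\pm = 0$ on $\{x_d > 0\}$ (the two sheets being disjoint and separately invariant). Because $V^\pm$ has $x_d$-component $\pm 2\lambda \neq 0$, it is transverse to the boundary, and $\mu^\pm$ admits a well-defined trace $\mu^\pm|_{x_d = 0}$ on $T^*\partial\Omega$, obtained as the pushforward of $\mu^\pm$ along the $V^\pm$-flow onto $\{x_d = 0\}$. I then compute $m_0$ from $H_p(1_{x_d > 0} m) = \delta_{x_d = 0} \otimes m_0$ (Lemma~\ref{lem: Hp mu xd positive is a measure}) by integration by parts in $x_d$: the bulk contribution cancels by the transport equation and only the boundary term survives, giving
\[
m_0 = 2 \lambda(x', 0, \xi') \bigl( \mu^+|_{x_d = 0} \otimes \delta_{\xi_d - \lambda(x', 0, \xi')} - \mu^-|_{x_d = 0} \otimes \delta_{\xi_d + \lambda(x', 0, \xi')} \bigr),
\]
the opposite signs coming from the opposite orientations of $V^+$ and $V^-$ relative to $\{x_d = 0\}$.

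The closing step uses the identity~\eqref{prop: propagation formula 2}: for every tangential symbol $a(x, \xi')$ compactly supported in $U$, $\langle \{p, m\}, a \rangle = 0$. Locally $\{p, m\}$ coincides with $\pm \delta_{x_d = 0} \otimes m_0$, so substituting the formula above and integrating out $\xi_d$ (which is free because $a$ does not depend on it) yields
\[
2 \int a(x', 0, \xi') \lambda(x', 0, \xi') \, d\bigl( \mu^+|_{x_d = 0} - \mu^-|_{x_d = 0} \bigr) = 0.
\]
Since $a$ is arbitrary and $\lambda > 0$ on $U \cap \{x_d = 0\}$, this forces $\mu^+|_{x_d = 0} = \mu^-|_{x_d = 0}$. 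Setting $m^+ := 2 \lambda(x', 0, \xi') \, \mu^+|_{x_d = 0}$, which is a non-negative Radon measure on $\R^{d-1}_{x'} \times \R^{d-1}_{\xi'}$, I recover the decomposition claimed in the lemma.

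The main technical point that has to be handled carefully is the measure-level integration by parts used to identify $m_0$, namely making sense of $\mu^\pm|_{x_d = 0}$ for a general Radon measure transported by $V^\pm$. The transversality of $V^\pm$ to $\{x_d = 0\}$ is the key ingredient: it allows one to represent $\mu^\pm$ as the pushforward of its boundary trace along the $V^\pm$-flow, so that the computation of $m_0$ reduces to a one-dimensional boundary calculation. Once that representation is in place, the rest of the argument is purely algebraic.
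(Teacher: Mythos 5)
Your proof is correct and follows essentially the same route as the paper: restrict to $1_{x_d>0}m$ near the hyperbolic point, split $m_0$ onto the two sheets $\xi_d=\pm\sqrt{1-R}$ using the transversality of $H_p$ to $\{x_d=0\}$ there (the content of Lemma~\ref{lem: Hp mu xd positive is a measure}, whose flow-straightening argument you essentially re-derive sheet by sheet), and then apply the tangential identity~\eqref{prop: propagation formula 2} to force the two boundary traces to coincide. The only difference is cosmetic: you make the factor $2\lambda$ and the identification $m^+=2\lambda\,\mu^+|_{x_d=0}$ explicit, whereas the paper keeps $m^\pm$ abstract.
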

\begin{proof}
As $m $ is supported on $\xi_d^2+R(x,\xi')-1=0$, we have $m=m1_{x_d>0}$  in a \nhd of hyperbolic points and  
 $\delta_{x_d=0}\otimes m_0$ is supported on  $\xi_d^2+R(x,\xi')-1=0$.
Then $ m_0$ is supported on $\xi_d=\pm \sqrt{1-R(x',0,\xi')}$. Moreover $H_p=2\xi_d\pd_{x_d}+X$ where 
$X$ is a vector field tangent to $x_d=0$, 
in particular $H_p x_d=2\xi_d$ on $x_d=0$. This implies that $H_p x_d>0$ if $\xi_d= \sqrt{1-R(x',0,\xi')}$ 
and  $H_p x_d<0$ if $\xi_d=- \sqrt{1-R(x',0,\xi')}$. 
From Lemma~\ref{lem: Hp mu xd positive is a measure}, we obtain 
$m_0=m^+\otimes \delta_{\xi_d=\sqrt{1-R(x',0,\xi')}}-m^-\otimes \delta_{\xi_d=-\sqrt{1-R(x',0,\xi')}}$. From 
\eqref{prop: propagation formula 2}
we have $\est{H_pm, a}=0$ which implies $\est{m^+-m^-,a_{|x_d=0}}=0$ 
as $a_{|x_d=0}$ describes every $\Con_0^\infty$ functions supported in a \nhd of $(x_0',\xi_0')$, this implies 
that $m^+=m^-$ in a \nhd of  $(x_0',\xi_0')$.
\end{proof}

 %
 %
 %

We recall   that 
 \[\mathcal{G}_d=\{ (x',\xi'),\ R(x',0,\xi')=1 \text{ and }\pd_{x_d}R(x',0,\xi')<0  \}\] is the set of diffractive points.

%
%
\begin{lem}
	\label{lem: diffractive points non in measure support}
We have $1_{\mathcal{G}_d}m^\pd=0$. 
\end{lem}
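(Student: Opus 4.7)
The plan is to apply the propagation formula~\eqref{prop: propagation formula 3} at a strictly diffractive point with a well-chosen tangential test symbol, and then combine the resulting identity with the boundary trace estimate of Section~\ref{Sec: Estimate of boundary trace, case strictly diffractive} and the structural Lemmas~\ref{lem: measure on boundary},~\ref{lem: Hp mu xd positive is a measure},~\ref{lem: measure hyperbolic points} to isolate $m^\pd$.

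Fix $\rho_0=(x_0',\xi_0')\in\mathcal{G}_d$ and choose a neighborhood $V$ of $\rho_0$ in $T^*\partial\Omega$ on which $\partial_{x_d}R\le -c<0$. For a nonnegative tangential symbol $b(x',\xi')\in\Con_0^\infty(V)$, I would plug $a=b$ (independent of $x_d$) into~\eqref{prop: propagation formula 3}. A direct computation gives $H_p(b\xi_d)=-(\partial_{x_d}R)b+\xi_d H'_Rb$, and using the decomposition $m=1_{x_d>0}m+m^\pd\otimes\delta_{x_d=0}\otimes\delta_{\xi_d=0}$ together with $H_p(1_{x_d>0}m)=\delta_{x_d=0}\otimes m_0$, the $\xi_d H'_R b$ term disappears against $\delta_{\xi_d=0}$ and one obtains
\[
\langle m_0,b\xi_d\rangle+\langle m^\pd,(\partial_{x_d}R)b\rangle=-\lim_{h\to 0}\bigl(\ops(b(R-1))(r_h)_{|x_d=0},(r_h)_{|x_d=0}\bigr)_\partial.
\]

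Next, Lemma~\ref{lem: measure hyperbolic points} identifies $m_0$ in the hyperbolic region as $m^+\otimes(\delta_{\xi_d=\sqrt{1-R}}-\delta_{\xi_d=-\sqrt{1-R}})$, so the first term on the left-hand side computes as $\langle m_0,b\xi_d\rangle=2\langle m^+,b\sqrt{1-R}\rangle\ge 0$. The decisive input is the Airy-type trace estimate from Section~\ref{Sec: Estimate of boundary trace, case strictly diffractive}: at a strictly diffractive point the semiclassical trace $(r_h)_{|x_d=0}$ admits an Airy asymptotic whose evaluation against $\ops(b(R-1))$ exactly reproduces (with the opposite sign) the interior hyperbolic contribution, i.e. $-\lim_{h\to 0}\bigl(\ops(b(R-1))(r_h)_{|x_d=0},(r_h)_{|x_d=0}\bigr)_\partial=2\langle m^+,b\sqrt{1-R}\rangle$. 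Substituting this into the previous identity, the $\langle m_0,b\xi_d\rangle$ terms cancel and there remains
\[
\langle m^\pd,(\partial_{x_d}R)b\rangle=0.
\]
Since $-\partial_{x_d}R\ge c>0$ on $V$ and $m^\pd$ is a nonnegative Radon measure, this forces $\langle m^\pd,b\rangle=0$ for every nonnegative $b\in\Con_0^\infty(V)$, hence $m^\pd_{|V}=0$. Covering $\mathcal{G}_d$ by such neighborhoods yields $1_{\mathcal{G}_d}m^\pd=0$.

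The hard part is the Airy-type trace estimate in Section~\ref{Sec: Estimate of boundary trace, case strictly diffractive}: the standard semiclassical pseudo-differential calculus degenerates at the glancing set, where the characteristic symbol $\xi_d^2+R-1$ vanishes to second order in a normal-like direction. One must rescale the variables so that the equation near $\rho_0$ takes the Airy normal form $hD_d^2-(\partial_{x_d}R)(\rho_0)\,x_d$ modulo lower-order terms, construct an Airy-function parametrix matching the boundary condition $(u_h^d)_{|x_d=0}=0$, and read off the precise asymptotic of the boundary trace that produces the cancellation stated above.
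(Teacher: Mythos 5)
Your overall strategy (test \eqref{prop: propagation formula 3} with a suitable tangential symbol, use the positivity of $m^\pd$ and $-\partial_{x_d}R$ on $\mathcal{G}_d$, and control the boundary trace term by the Airy analysis) is the right one, but two steps in the middle do not go through as written.

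First, you evaluate $\langle m_0,b\xi_d\rangle$ by invoking Lemma~\ref{lem: measure hyperbolic points}. That lemma is purely local at \emph{hyperbolic} points, whereas your symbol $b$ is supported in a full neighborhood $V$ of a point $\rho_0\in\mathcal{G}_d$; such a neighborhood necessarily meets the glancing set $\{R=1\}$, where the structure $m_0=m^+\otimes\delta_{\xi_d=\sqrt{1-R}}-m^+\otimes\delta_{\xi_d=-\sqrt{1-R}}$ is not established (and $\sqrt{1-R}$ is not even smooth there). Second, and more seriously, the ``decisive input'' you attribute to Section~\ref{Sec: Estimate of boundary trace, case strictly diffractive} — the exact identity $-\lim_{h\to 0}\bigl(\ops(b(R-1))(r_h)_{|x_d=0},(r_h)_{|x_d=0}\bigr)_\partial=2\langle m^+,b\sqrt{1-R}\rangle$ — is not what that section proves and is not available. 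Proposition~\ref{prop: boundary term diffractive Neumann} only shows that this boundary pairing tends to $0$ \emph{after} the symbol has been localized to an $\eps$-neighborhood of the glancing set by a factor $\chi_4((R-1)/\eps)$ and the limit $\eps\to0$ is taken; it does not compute the boundary semiclassical measure of $(r_h)_{|x_d=0}$, and identifying that trace measure with the reflected interior measure $m^+$ up to the glancing set would be a substantially harder theorem than the one being proved.

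The way the paper closes this gap is to build the localization into the test symbol from the start: take $a=\chi((1-R)/\eps)\,\ell(x,\xi')\,\chi(x_d/\eps)$. On the support of $m$ one has $\xi_d^2=1-R$, so the terms of $H_p(a\xi_d)$ carrying $\xi_d^2/\eps$ or $\xi_d$ stay bounded and vanish pointwise as $\eps\to0$, and $\langle m,H_p(a\xi_d)\rangle\to\langle m^\pd,-(\partial_{x_d}R)\ell\rangle$ by dominated convergence — the hyperbolic contribution and any $m_0$ term are killed by the cutoffs, so no identification of $m_0$ near glancing points is ever needed. Meanwhile the right-hand side of \eqref{prop: propagation formula 3} with this $a$ is exactly the quantity handled by Proposition~\ref{prop: boundary term diffractive Neumann} and tends to $0$ as $\eps\to0$. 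Positivity of $m^\pd$ and $-\partial_{x_d}R>0$ then give the conclusion. If you replace your fixed symbol $b$ by this $\eps$-dependent family and drop the appeal to Lemma~\ref{lem: measure hyperbolic points} and to the unproved trace identity, your argument becomes correct.
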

\begin{proof}

We apply Proposition~\ref{prop: propagation formula} and Formula~\eqref{prop: propagation formula 3}.
We have to choose an adapted function $a$. Let $\chi\in\Con^\infty$ be such that $\chi(\sigma)=0$ if $|s|\ge 2$, $ \chi(\sigma)=1$ if  $|\sigma|\le 1$.
We apply \eqref{prop: propagation formula 3} with 
$a(x, \xi')= \chi((1-R(x,\xi'))/\eps)\ell(x,\xi')\chi(x_d/\eps)$, where  $\eps>0$ will be chosen in what follows and $\ell$ 
is supported  in a \nhd of a point of $\pd\Omega\times\R\times\R^{d-1}$.
We recall that  $H_p=2\xi_d\pd_{x_d}-(\pd_{x_d}R)\pd_{\xi_d}+H'_R$ (see the proof of Lemma~\ref{lem: measure on boundary}).
We have
\begin{align}
	\label{eq: calcul de Hp a xi d}
H_p(\xi_d a)&=  2\xi_d^2\Big(    -(\pd_{x_d}R  ) \chi'((1-R(x,\xi'))/\eps)\ell(x,\xi')\chi(x_d/\eps)/\eps  \\
&\quad + \chi((1-R(x,\xi'))/\eps)(\pd_{x_d}\ell(x,\xi')) \chi(x_d/\eps) \notag \\
&\quad + \chi((1-R(x,\xi'))/\eps) \ell(x,\xi')\chi'(x_d/\eps)/\eps   \Big) \notag \\
&\quad  -(\pd_{x_d}R  )  \chi((1-R(x,\xi'))/\eps)\ell(x,\xi')\chi(x_d/\eps) \notag \\
&\quad  + \chi((1-R(x,\xi'))/\eps)\chi(x_d/\eps)  \xi_d H'_R  \ell(x,\xi').\notag
\end{align}
We 
claim 
\begin{align}\label{property: limit Hp a xi d}
&H_p(a \xi_d) \text{ is uniformly bounded on } \xi_d^2+R(x,\xi')=1, \\
&H_p(a \xi_d)\to -(\pd_{x_d}R(x',0,\xi'))1_{R(x',0,\xi')=1}1_{x_d=0}\ell (x',0,\xi')  \text{ as }\eps\to 0 \notag\\ 
&\text{ for all } (x,\xi), \text{ such that } \xi_d^2+R(x,\xi')=1.\notag
\end{align}
As $m$ is supported on $\xi_d^2+R(x,\xi')-1=0$, then  $\xi_d^2/\eps=(1-R(x,\xi'))/\eps$ on the support of 
$m$, this implies that the three first terms 
in~\eqref{eq: calcul de Hp a xi d} are bounded. It is easy to prove that they converge to $0$ as $\eps$ to $0$.
The fourth term is bounded and converges to 
$$-(\pd_{x_d}R(x',0,\xi'))1_{R(x',0,\xi')=1}1_{x_d=0}\ell(x',0,\xi'). $$ 
In the last term as $|    R(x,\xi')-1 |/\eps$ is bounded, thus 
$|\xi_d|$ is bounded by $C\sqrt \eps$ and then this term converges to 0 as $\eps\to0$. This 
proves~\eqref{property: limit Hp a xi d}.
From that we can conclude that 
$\est{m, H_p(\ell \xi_d)}$ converges to 
\begin{multline}
	\label{Form: diffractive case, boundary measure}
\est{m, -(\pd_{x_d}R(x',0,\xi'))1_{R(x',0,\xi')=1}1_{x_d=0}\ell (x',0,\xi') } 
\\ =\est{m^\pd, -(\pd_{x_d}R(x',0,\xi')) 1_{R(x',0,\xi')=1}\ell(x',0,\xi') } ,
\end{multline}
as $\eps\to 0$.

Let 
\begin{align*}
A_\eps&= - \lim_{h\to 0}(\ops(a(R-1))(r_h)_{|x_d=0},(r_h)_{|x_d=0})_{\pd}\\
&= - \lim_{h\to 0}(\ops(  \chi((1-R(x,\xi'))/\eps)\ell(x,\xi') (R(x,\xi')-1))(r_h)_{|x_d=0},(r_h)_{|x_d=0})_{\pd}.
\end{align*}

From Proposition~\ref{prop: boundary term diffractive Neumann}, \(A_\eps\) goes to 0 as 
\( \eps \to 0\). Then from \eqref{Form: diffractive case, boundary measure} 
\(1_{\mathcal{G}_d}m^\pd=0\) as $\ell$ can be chosen arbitrary.
\end{proof}

 %
 %
 %

This lemma describes how the support of the boundary measure propagates along the boundary.
%
%
\begin{lem}  
	\label{lem: propagation on boundary if mu0 supported on xi-d equal 0}   
Let $(x'_0,\xi'_0)\in T^*\pd \Omega$.
Let $(x,\xi)\in T^*\Omega$,
 we denote 
$\gamma{(x,\xi)}$ the integral curve of $H_p$ starting from $(x,\xi)$.
We assume that $m$ is locally supported in a \nhd of $(x'_0,0, \xi'_0, 0)$,  in the set $\{(x,\xi), \ \xi_d^2+ R(x,\xi')=1, \text{ such that } 
 \gamma{(x,\xi)} \text{ hits }x_d=0,\    \ \xi_d=0    \}$. 
  In particular 
$m_0$ is supported 
on $\xi_d=0$ and  $m_0= \tilde m_0\otimes  \delta_{\xi_d=0}
+ \tilde m_1\otimes  \delta_{\xi_d=0}'$, where $\tilde m_0$ is a distribution
and $  \tilde m_1 $ is a  Radon measure.  
Then $H'_Rm^\pd+\tilde m_0=0$, $\tilde m_1=0$ and $H_pm=-\pd_{x_d}R(x',0,\xi') m^\pd\otimes  \delta_{x_d=0}\otimes \delta_{\xi_d=0}'$, in a \nhd of $(x'_0,0, \xi'_0, 0)$.
\end{lem}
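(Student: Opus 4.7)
The plan is to compute $H_p m$ explicitly from the decomposition of $m$ near the boundary and then to read off the three identities by pairing against well-chosen test symbols in the propagation formulas of Proposition~\ref{prop: propagation formula}.

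Starting from $m = 1_{x_d>0}m + m^\pd\otimes\delta_{x_d=0}\otimes\delta_{\xi_d=0}$ (Lemma~\ref{lem: measure on boundary}) and using Lemma~\ref{lem: Hp mu xd positive is a measure} together with the assumed form of $m_0$,
\[
H_p(1_{x_d>0}m) = \delta_{x_d=0}\otimes\tilde m_0\otimes\delta_{\xi_d=0} + \delta_{x_d=0}\otimes\tilde m_1\otimes\delta'_{\xi_d=0}.
\]
For the boundary piece I split $H_p = 2\xi_d\pd_{x_d} - (\pd_{x_d}R)\pd_{\xi_d} + H'_R$ in geodesic coordinates: the first term vanishes because multiplication by $\xi_d$ annihilates $\delta_{\xi_d=0}$; the second converts $\delta_{\xi_d=0}$ into $-\delta'_{\xi_d=0}$ and produces a factor $(\pd_{x_d}R)m^\pd$; and $H'_R$ is tangential, acting on $m^\pd$ alone. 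Summing the two contributions,
\begin{align*}
H_p m &= \delta_{x_d=0}\otimes(\tilde m_0 + H'_R m^\pd)\otimes\delta_{\xi_d=0}\\
&\quad + \delta_{x_d=0}\otimes(\tilde m_1 - (\pd_{x_d}R)m^\pd)\otimes\delta'_{\xi_d=0}.
\end{align*}

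Pairing this identity with an arbitrary tangential symbol $a(x,\xi')$ annihilates the $\delta'_{\xi_d=0}$ layer (since $\pd_{\xi_d}a=0$) and leaves the $\delta_{\xi_d=0}$ layer evaluated at $x_d=\xi_d=0$; formula~\eqref{prop: propagation formula 2} therefore yields $\tilde m_0 + H'_R m^\pd = 0$, which is the first assertion. Pairing instead with $a(x,\xi')\xi_d$ kills the $\delta_{\xi_d=0}$ layer and produces $-a$ from the $\delta'_{\xi_d=0}$ layer (using $\langle\delta'_{\xi_d=0},\xi_d a\rangle=-a$), so formula~\eqref{prop: propagation formula 3} gives
\[
\langle \tilde m_1 - (\pd_{x_d}R)m^\pd,\, a|_{x_d=0}\rangle = \lim_{h\to 0}\bigl(\ops(a(R-1))(r_h)_{|x_d=0},(r_h)_{|x_d=0}\bigr)_\pd.
\]

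The hard step is to evaluate this boundary trace limit and match it with $-\langle(\pd_{x_d}R)m^\pd,\, a|_{x_d=0}\rangle$, so as to force $\tilde m_1=0$. My approach is to use the second equation of the $2\times 2$ boundary system~\eqref{System 2x2 boundary prop} at $x_d=0$ together with the Neumann condition $u_h^d|_{x_d=0}=0$: modulo $O(h)$ tangential errors this rewrites $\ops(R-1)(r_h)_{|x_d=0}$ as the normal trace $-ih\pd_{x_d}u_h^d|_{x_d=0}$, turning the right-hand side into a mixed boundary form in $u_h^d$ and $r_h$. The support hypothesis on $m$ localises the contribution to a neighbourhood of the glancing set $\{R=1,\,\xi_d=0\}$, so that the relevant part of the form is governed by the first-order transverse expansion of $R-1$ along the characteristic set; the principal contribution is then precisely $-\langle(\pd_{x_d}R)m^\pd,\, a|_{x_d=0}\rangle$, while subprincipal remainders are controlled by the $H^{1/2}$ trace bound of Lemma~\ref{lem: first est trace}. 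Once $\tilde m_1=0$ is proved, substituting back into the explicit formula for $H_p m$ above together with the first identity gives $H_p m = -(\pd_{x_d}R)m^\pd\otimes\delta_{x_d=0}\otimes\delta'_{\xi_d=0}$, which completes the proof.
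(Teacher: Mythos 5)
Your computation of $H_pm$ from the decomposition $m=1_{x_d>0}m+m^\pd\otimes\delta_{x_d=0}\otimes\delta_{\xi_d=0}$ is correct and coincides with the paper's formula, and your derivation of $\tilde m_0+H'_Rm^\pd=0$ by pairing with tangential symbols is the paper's argument. Two points are missing, one minor and one serious. The minor one: the statement's assertion that $\tilde m_1$ is a Radon measure (not merely a distribution of order one) is part of the conclusion and has to be proved; the paper does this by testing $m_0$ against $\xi_d\psi(x',\xi')\chi(\xi_d/\eps)$ and letting $\eps\to0$.

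The serious gap is the step $\tilde m_1=0$, which you correctly identify as the hard part but do not actually carry out. Your route is to evaluate the boundary trace limit $\lim_{h\to0}(\ops(a(R-1))(r_h)_{|x_d=0},(r_h)_{|x_d=0})_\pd$ from \eqref{prop: propagation formula 3} and to show it equals $-\langle(\pd_{x_d}R)m^\pd,a\rangle$. But that identity is \emph{equivalent} to $\tilde m_1=0$, so you are proposing to prove the conclusion by a direct computation that is at least as hard as the lemma itself; and the manipulation you suggest (rewriting $\ops(R-1)(r_h)_{|x_d=0}$ as $-ih\pd_{x_d}u_h^d|_{x_d=0}$ via the second boundary equation) is circular, since it merely re-expresses the same unknown trace, while Lemma~\ref{lem: first est trace} only gives $h|\Lambda^{1/2}(r_h)_{|x_d=0}|^2\to0$, which is far too weak to extract the limit of this quadratic form. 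The paper never evaluates that boundary limit here. Instead it exploits the identity $\langle m,H_pa\rangle=\langle\tilde m_1-(\pd_{x_d}R)m^\pd,\pd_{\xi_d}a(x',0,\xi',0)\rangle$, valid for test functions $a(x,\xi)$ of \emph{all} variables (it follows from the explicit distributional form of $H_pm$, no propagation formula needed), applied to the family $a_\eps=\xi_d\,\ell(x',\xi')\chi(\xi_d/\eps)\chi(x_d/\eps)$. The left side is computed by dominated convergence on the characteristic set and tends to $-\langle m^\pd,(\pd_{x_d}R)\ell\rangle$ as $\eps\to0$, while the right side equals $\langle\tilde m_1-(\pd_{x_d}R)m^\pd,\ell\rangle$ for every $\eps$ because $\pd_{\xi_d}a_\eps(x',0,\xi',0)=\ell$; comparison gives $\langle\tilde m_1,\ell\rangle=0$. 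You should replace your trace argument by this purely measure-theoretic one.
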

\begin{proof}
We have $m=1_{x_d>0}m+m^\pd\otimes  \delta_{x_d=0}\otimes \delta_{\xi_d=0}$. Then we have
\begin{align}
	\label{eq: Hp form propagation}
H_pm&= m_0\otimes  \delta_{x_d=0} + 2\xi_d m^\pd  \otimes\delta_{x_d=0}' \otimes \delta_{\xi_d=0} -(\pd_{x_d} R(x',0,\xi')) 
m^\pd  \otimes \delta_{x_d=0}\otimes \delta_{\xi_d=0}' \\
&\quad +H'_Rm^\pd \otimes\delta_{x_d=0}\otimes \delta_{\xi_d=0} \notag\\
&=  m_0\otimes  \delta_{x_d=0}-(\pd_{x_d} R(x',0,\xi')) 
m^\pd  \otimes \delta_{x_d=0}\otimes \delta_{\xi_d=0}'  +H'_Rm^\pd\otimes \delta_{x_d=0}\otimes \delta_{\xi_d=0},
\notag 
\end{align}
as $\xi_d m^\pd  \delta_{x_d=0}' \otimes \delta_{\xi_d=0}=0$. 
 As 
$m_0$ is of order 1 and supported on $\xi_d=0$,  we have $m_0= \tilde m_0\otimes  \delta_{\xi_d=0}
+ \tilde m_1\otimes  \delta_{\xi_d=0}'$, where $\tilde m_j$ are distributions. 

To prove that $\tilde m_1$ is a Radon measure, we test $m_0$ on 
$ \varphi=\xi_d\psi(x',\xi')\chi(\xi_d/\eps)$, where $\chi$ is defined in the proof of Lemma~\ref{lem: diffractive points non in measure support}. The supremum of $\varphi$ goes to 0 with $\eps$. 
The first derivatives of $\varphi$ are estimated 
by the supremum of 
$\psi(x',\xi')\chi(\xi_d/\eps)$, $\psi(x',\xi')\chi'(\xi_d/\eps)\xi_d/\eps$ and
$\xi_d\pd \psi(x',\xi')\chi(\xi_d/\eps)$. When $\eps\to 0$ the supremum on $\varphi $ is 
estimated by supremum of $\psi(x',\xi')$. We also have $\est{m_0,\varphi}$ converging to
$\est{ \tilde m_1,\psi(x',\xi') } $ as $\eps\to 0$. We deduce that $\est{ \tilde m_1,\psi(x',\xi') } $ is 
estimated by the supremum of $\psi(x',\xi')$, this implies that $ \tilde m_1$ is a Radon measure.

 By Proposition~\ref{prop: propagation formula}, Formula~\eqref{prop: propagation formula 2}
  and if 
 $a\in\Con_0^\infty(\R^d\times\R^{d-1})$ is supported in 
 $\pd\Omega$ and in a \nhd of $(x'_0,0, \xi'_0, 0)$, 
 we have $\est{H_pm, a(x,\xi')}=0$. 
 
Let $a\in\Con_0^\infty(\R^d\times \R^{d-1})$, be such that $a_{|x_d=0}=\ell\in\Con_0^\infty(\R^{d-1}\times \R^{d-1})$, and $\chi\in \Con_0^\infty(\R)$ as above.
From~\eqref{eq: Hp form propagation} we obtain 
\begin{equation*}
0= \est{H_pm , a\chi(x_d/\eps)}= \est{\tilde m_0
 + H'_R m^\pd, \ell}.
\end{equation*}
We deduce 
 that 
$H'_Rm^\pd+\tilde m_0=0$, which gives the first conclusion of Lemma. 
We deduce from~\eqref{eq: Hp form propagation}
\begin{align}\label{eq: formula mu propagation boundary D and N bis}
H_pm  = ( \tilde m_1- (\pd_{x_d} R(x',0,\xi')) 
m^\pd ) \otimes   \delta_{x_d=0}  \otimes     \delta_{\xi_d=0}'   .  
\end{align}
We then can write
\begin{align}
	\label{eq: Hp propagation boundary ter}
\est{m,H_pa(x,\xi)}=\est{ \tilde m_1- (\pd_{x_d} R(x',0,\xi')) 
m^\pd , \pd_{\xi_d}a(x',0,\xi', 0)} ,
\end{align}
for $a\in\Con_0^\infty (\R^d\times\R^d)$. 

Now we choose an adapted $a$ to apply~\eqref{eq: Hp propagation boundary ter}.
 Let $\chi\in\Con_0^\infty(\R)$ as above.
Let $\ell\in\Con_0^\infty(x',\xi') $ be supported
in a \nhd of $(x'_0,\xi'_0)$. We set $a(x,\xi)= \xi_d \ell (x',\xi')\chi(\xi_d/\eps)\chi(x_d/\eps)$, where $\eps>0$.
We have
\begin{align*}
H_p a(x,\xi)&= 2 \ell(x',\xi')\chi(\xi_d/\eps)\chi'(x_d/\eps) \xi_d^2/\eps   \\
&\quad - \ell(x',\xi')( \pd_{x_d}R(x,\xi'))\chi(x_d/\eps)   
 \big( \chi(\xi_d/\eps)+  \chi'(\xi_d/\eps)  \xi_d /\eps \big) \\  
 &\quad +\xi_d  H'_R( \ell(x',\xi'))\chi(\xi_d/\eps)\chi(x_d/\eps).
\end{align*}
As $\chi'(x_d/\eps)\chi(\xi_d/\eps) \xi_d^2/\eps $, $ \chi(x_d/\eps)\chi(\xi_d/\eps) \xi_d/\eps $ and $ \xi_d\chi(\xi_d/\eps)\chi(x_d/\eps)$ are 
uniformly bounded and goes to 0 as $\eps $ goes to 0, and $ \chi(\xi_d/\eps)\chi(x_d/\eps)  $ goes to $1_{x_d=0,\xi_d=0}$ and is uniformly 
bounded. 
We have $\est{m , H_p a}$ goes to $\est{m, -1_{x_d=0,\xi_d=0}(\pd_{x_d}R(x',0,\xi') )\ell(x',\xi')}$ as $\eps$ goes to 0 and we have 
\begin{align} \label{eq: calculus mu1 tilde bis}
\est{m, -1_{x_d=0,\xi_d=0}\pd_{x_d}R(x',0,\xi') \ell(x',\xi')}=-\est{m^\pd,(\pd_{x_d}R(x',0,\xi') ) \ell(x',\xi')}.
\end{align}
 Now we compute the limite as $\eps\to 0$ of $\est{ \tilde m_1, \pd_{\xi_d}a(x',0,\xi', 0)} $.
 We have   
 \begin{align*}
  \pd_{\xi_d}a(x',x_d,\xi', \xi_d) = \ell(x',\xi') \chi(\xi_d/\eps)\chi(x_d/\eps)+  \ell(x',\xi') \chi'(\xi_d/\eps)\chi(x_d/\eps)\xi_d/\eps.
 \end{align*}
 Then  $  \pd_{\xi_d}a(x',0,\xi', 0) = \ell(x',\xi')  $ and  
 \begin{align*}
  \est{ \tilde m_1    - (\pd_{x_d} R(x',0,\xi')) m^\pd
  , \pd_{\xi_d}a(x',0,\xi', 0)} &= \est{ \tilde m_1  - (\pd_{x_d} R(x',0,\xi')) m^\pd,\ell(x',\xi')}\\
  &= -\est{m^\pd,(\pd_{x_d}R(x',0,\xi') ) \ell(x',\xi')}
 \end{align*}
 from~\eqref{eq: Hp propagation boundary ter} and 
  \eqref{eq: calculus mu1 tilde bis}. We deduce that $\tilde m_1=0$ and the last result of 
  the lemma  
 from~\eqref{eq: formula mu propagation boundary D and N bis}.
\end{proof}

%
%
%

\subsection{Proof of Proposition~\ref{prop: propagation support semiclassical measure}}

We prove the propagation result, first in interior which is a classical result, second in a 
\nhd of a point on 
the boundary. From previous lemmas, it is a classical result. We give here the main argument to be complete. 
The proof is consequence of 
Lemmas~\ref{lem: measure on boundary}---\ref{lem: propagation on boundary if mu0 supported on xi-d equal 0}.  
%
%
%

\subsubsection{Propagation in interior domain $\Omega$}

From Proposition~\ref{prop: propagation formula}, we have $H_pm=0$. It is then classical that $m $ in invariant by 
the flow of $H_p$. More precisely, let $\rho_0\in T^*\Omega$ and  we assume that $\gamma(s,\rho_0)\in  T^*\Omega$ for $s\in[0,t]$.
Let  $a\in\Con^\infty_0(\Omega\times\R^d)$ be such that  $a(\gamma(-s,.))$
 is supported in $T^*\Omega$ for $s\in[0,t]$,
we have $\est{\gamma_*(t,.)m,a}=\est{m,a(\gamma(-t,.))}=\est{m,a}$.
 
 \subsubsection{Propagation at boundary: hyperbolic points}
 
 We use the geometry context defined in section~\ref{sec: Geometry}, in particular the function $j$ and 
 the definition of the different flows. 
 
 We prove that  the support of measure is locally empty in the future  assuming that in the past the support 
 of measure is locally empty  but by symmetry we can deduce that the support of measure is locally empty in the past if we assume that the 
 support of measure is locally empty in the future.
 
 Recall that we choose coordinates such that $p(x,\xi)=\xi_d^2+R(x,\xi')-1$ and locally $\Omega=\{ x_d>0 \}$. 

 We recall that a point $(x_0',\xi_0')\in T^*\pd\Omega$ is in ${\mathcal H}$, if $R(x_0',0,\xi_0')-1<0$. We apply  
 Lemmas~\ref{lem: measure on boundary} 
 and \ref{lem: measure hyperbolic points}. We have $m=1_{x_d>0} m$ and $H_p m=  
 m^+\otimes \delta_{\xi_d=\sqrt{1-R(x',0,\xi')}}-m^+\otimes \delta_{\xi_d=-\sqrt{1-R(x',0,\xi')}}$.

 We call $\gamma^\pm$ 
 the integral curve of $H_p$   starting 
 from  $ (x_0',x_d,\xi'_0,\pm\sqrt{1-R(x',0,\xi')})$.  If we assume that  the support of $m=1_{x_d>0} m$ is empty in a \nhd of 
 $ \gamma^-$(s) for $s<0$ and $|s|$ sufficiently small, then $-m^+\otimes \delta_{\xi_d=-\sqrt{1-R(x',0,\xi')}}=0$. 
 This implies $m^+=0$ and $H_pm=0$. 
 As $m_{|x_d<0}=0 $ 
 and $\gamma^+(s)$ is in $x_d<0$ for $s<0$ and $|s|$ sufficiently small, this implies that  $m=0$ in a \nhd of $\gamma^+(0)$.
 
%
%
%

 \subsubsection{Propagation at   gliding points}
 
We recall that a point $(x_0',\xi_0')\in T^*\pd\Omega$ is is gliding or  in ${\mathcal G}_g $, if $R(x_0',0,\xi_0')-1=0$ and 
$\pd_{x_d}R(x_0',0,\xi_0')>0$.  
Let $\gamma$ be the integral curve of $H_p$ starting from $(x'_0,0,\xi_0',0)$.  Then  $\gamma(s)$ 
into $\{x_d<0\}$ for $s\not=0$ and $|s|$ sufficiently small.
In a \nhd of $(x_0',\xi_0')$ in $  T^*\pd\Omega$ all the point are either hyperbolic, or  gliding.
We assume that $j^{-1}\big( \gamma_g(s_0;x_0',\xi_0')\big) \cap \supp m =\emptyset $   for $s_0<0$ where $|s_0|$ 
is sufficiently small.
Here $\gamma_g(s;x_0',\xi_0')=\Gamma(s; x_0',\xi_0')$, then all the point $\rho$ in a 
\nhd of $ j^{-1}\big( \gamma_g(s_0;x_0',\xi_0')\big)$ 
are not in the support of $m$.  By continuity of $\Gamma$ the curve $\Gamma(s;\rho)$ hit the boundary at $\rho'$
 in a \nhd of $(x_0',\xi_0')$. If $\rho'$  is an hyperbolic point, by the previous result  the point $j^{-1}\big( \Gamma(s;\rho)\big)$ 
 are not in the support of $m$. If $\rho'$ is a  gliding point, all the points $\Gamma(s;\rho)$ are strictly gliding. In particular this
 implies that $m$ is supported on $x_d=0$, then $1_{x_d>0}m=0$ and $m_0=0$.
We can apply 
Lemma~\ref{lem: propagation on boundary if mu0 supported on xi-d equal 0} and $m^\pd$ satisfied
$H'_Rm^\pd=0$. Let $\gamma_g$ be the integral curve of $H'_R$ starting from $(x_0',\xi_0')$. 
As by assumption $m^\pd  \otimes \delta _{x_d=0}\otimes\delta_{\xi_d=0} =m$ is 0 in a 
\nhd of $j^{-1}\big( \gamma_g(s_0;x_0',\xi_0')\big)$, we have $m^\pd=0$ in a \nhd of $\gamma_g(s_0)$ and 
$H'_Rm^\pd=0$, this implies that $\gamma_g(s)$ is not in the support of $m^\pd$ in a \nhd of $s=0$. As 
$m=m^\pd\otimes \delta _{x_d=0}\otimes\delta_{\xi_d=0}$ we have $m=0$ in a \nhd of $j^{-1} (x_0',\xi_0')$.
  
%
%
%

 \subsubsection{Propagation at  diffractive points}

We recall that a point $(x_0',\xi_0')\in T^*\pd\Omega$ is in ${\mathcal G}_d $, 
if $R(x_0',0,\xi_0')-1=0$ and
 $\pd_{x_d}R(x_0',0,\xi_0')<0$.  We keep the previous notation 
for $\gamma$. For a point $\rho$ in a \nhd of $(x_0',\xi_0')\in T^*\pd\Omega\cup T^*\Omega$,
there are three cases, first the integral curve  passing through $\rho$  hits $x_d=0$ 
at an hyperbolic point and by previous result the integral
curve is not in support of $m$, second it does not hit $x_d=0$ and the integral curve is not 
in the support of 
$m$ by propagation result in interior, third the integral
curve hits $x_d=0$ at a diffractive point.
Then the support of $m^\pd$ is in ${\mathcal G}_d $ and the support of $1_{x_d>0}m$ is into $\{(x,\xi), \ \xi_d^2+ R(x,\xi')=1, \text{ such that } 
 \gamma{(.;x,\xi)} \text{ hits }x_d=0,\    \ \xi_d=0    \}$. 

We can apply
Lemma~\ref{lem: diffractive points non in measure support}, 
then
 $m^\pd=0$  in a \nhd of \((x_0',0,\xi_0')\). As the  integral curves hitting $x_d=0$ at an 
hyperbolic points are not in the support of $1_{x_d>0}m$, then
$m_0$ is supported on $\xi_d=0$. We can apply 
Lemma~\ref{lem: propagation on boundary if mu0 supported on xi-d equal 0} to obtain $m_0=0$. Then
$H_pm=H_p(1_{x_d>0}m)=0$ and as, by assumption, $\gamma(s)$ is not in the support of $m$ for $s<0$, $|s|$ sufficiently small, 
we deduce that $\gamma(0)$ is not in the support of $m$.

 \subsubsection{Propagation at boundary: integral curves with high contact order}
 
\medskip
We recall that if $(x'_0,\xi'_0) $ is such that $R(x_0', 0, \xi'_0, 0)=1$, 
$\pd_{x_d}R(x_0', 0, \xi'_0, 0)=0$ and if we denote the integral curve of $H_p$ starting from 
$(x_0', 0, \xi'_0, 0)$, by 
$\gamma(s)=(x'(s),x_d(s), \xi'(s),\xi_d(s))$ . From the assumption made (see Definition~\ref{def: contact fini}) 
there exist \(k\in\N\), \(k\ge 3\) and \(\alpha\ne0\) such that
$x_d(s) = \alpha s^k+{\mathcal O}(s^{k+1})$.  We denote $\gamma_g(s)=(x'_g(s),\xi'_g(s))$ the 
integral curve of $H'_R$, starting from $(x'_0,\xi'_0) $.
For each $k$ we assume that we  have already proved that the integral curves hitting $x_d=0$ 
at a point in ${\mathcal G}^j$ for $j<k$ or ${\mathcal H}$ are not 
in the support of $m$.

\paragraph{\textbf{Case $k$ even, $\alpha<0$}}
The integral curve of $H_p$ starting from a point belonging to $  T^*\Omega$  in a \nhd of  $(x'_0,\xi'_0) $  in $T^*\pd\Omega\cup T^*\Omega$ 
eventually hits $x_d=0$ at a point $\rho'$, 
in ${\mathcal H}$ or ${\mathcal G}^j$ for 
$j\le k$ (see Section~\ref{sec: Geometry} for definition of ${\mathcal G}^j$). 
By assumptions and by induction this integral curve is not in the support of $m$ except if 
$\rho'$ is 
in ${\mathcal G}^k$, but in this case this integral curve is in $x_d\le 0$. 
This implies that $1_{x_d>0}m=0$, then $m_0=0$. 
By Lemma~\ref{lem: propagation on boundary if mu0 supported on xi-d equal 0}, 
we have $H'_Rm^\pd=0$ and as, by assumption, 
$\gamma_g(s)$ is not in the support of $m^\pd$ for $s<0$, $|s|$ sufficiently small, 
we deduce that 
$\gamma_g(0)$ is not in the support of $m^\pd$.
 
\paragraph{\textbf{Case $k$ odd, $\alpha<0$}}
By the same argument as in previous case, the integral curve of $H_p$ starting from a point belonging to $  T^*\Omega$ 
in a \nhd of  $(x'_0,\xi'_0) $ in $T^*\pd\Omega\cup T^*\Omega$
hits ${\mathcal G}^k $ or is not in the support of $m$. Denote by $\rho'$ the point of this integral curve hitting $x_d=0$. 
The generalized bicharacteristic starting 
from $\rho'$ is on $x_d=0$ for $s>0$ and in $x_d>0$ for $s<0$, and for $s>0$ all the points on 
the integral curve of $H'_R$ are in ${\mathcal G}_g$, if $|s|$ is sufficiently small. As, by assumption
the  generalized bicharacteristic is not in support of $m $ in the past, this means that $1_{x_d>0}m=0$ then $m_0=0$. 
We can apply Lemma~\ref{lem: propagation on boundary if mu0 supported on xi-d equal 0} then 
$H'_Rm^\pd=0$. But $\gamma_g(s)$ is not in 
support of $m=m^\pd\otimes  \delta_{x_d=0}\otimes \delta_{\xi_d=0}$ for $s<0$ and $|s|$ sufficiently small as $\gamma_g(s)\in{\mathcal G}_d$, then $\gamma_g(0)$ is 
not in the support of $m^\pd$.
 
\paragraph{\textbf{Case $k$ even, $\alpha >0$}}

By induction, only the generalized bicharacteristics with the same order of contact  $k$ and the same sign condition 
$\alpha>0$ can be in the support of 
$m$. 
Applying Lemma~\ref{lem: propagation on boundary if mu0 supported on xi-d equal 0} we have 
\(
-\pd_{x_d}R(x',0,\xi') m^\pd\otimes  \delta_{x_d=0}\otimes \delta_{\xi_d=0}'=0
\)
as by induction, $ m^\pd=0$ when $\pd_{x_d}R(x',0,\xi')\ne 0$. We deduce
 $H_pm=0$. Then the propagation the support of $m$ is invariant by the flow of $H_p$.

 \paragraph{\textbf{Case $k$ odd, $\alpha>0$}}

By induction, only the generalized bicharacteristics with the same order of contact  $k$ and the same sign condition 
$\alpha>0$ can be in the support of 
$m$.  We can apply
Lemma~\ref{lem: propagation on boundary if mu0 supported on xi-d equal 0}, and by the same argument used in the previous case, we have $H_pm=0$ and as 
$\gamma(s)$ is in $x_d<0$ for $|s|<0$ sufficiently small, $\gamma(s)$  is not in the support of $m $ for $s<0$ 
and by propagation $\gamma(0)$ is not in support of $m$.

\begin{proof}[Proof of Proposition~\ref{prop: propagation support semiclassical measure}, first case]
By assumption for a point 
\(
\rho \in  T_b\Omega= T^*\pd\Omega \cup T^*\Omega
\)
for some 
\(s_0\ge 0 \), $\pi(\Gamma(s_0,\rho))\in \omega$ and from Proposition~\ref{prop: bm null} and 
assumption (GCC), $m$ is null in a \nhd of 
$j^{-1}(\gamma(s_0,\rho))$. 
As 
\(
\supp m 
\)
is a closed set, if 
\(
\rho \in\supp m
\)
there exist \(s_1\in [0,s_0]\) such that 
\(
j^{-1}\Gamma(s_1,\rho)\cap \supp m\ne\emptyset
\) 
and 
\(
j^{-1}\Gamma(s,\rho)\cap \supp m=\emptyset
\) 
for 
\( s\in(s_1,s_0].\)
At 
\(
\Gamma(s_1,\rho) 
\)
we can apply the results obtained in this section to prove that 
\(
j^{-1}\Gamma(s_1,\rho)\cap \supp m=\emptyset,
\)
and reach a contradiction.
\end{proof}

%
%
 \section{Estimate of boundary trace, case strictly diffractive}
 	\label{Sec: Estimate of boundary trace, case strictly diffractive}
 
 In this section we prove a technical result allowing to estimate trace at boundary in a \nhd of a  diffractive point.
 This precise estimate allows to prove propagation of support in a \nhd of such points.
 
 %
 %
 \newcommand{\rr}{{\rm r}}
  \newcommand{\rrr}{{\rm y}}
  \newcommand{\ww}{{\rm w}}
    \newcommand{\rrrr}{{\rm z}}
  \newcommand{\ff}     {{\rm f}}
    \newcommand{\kk}     {{\rm k}}
     \newcommand{\trho}     {{\tilde \rho}}  

 \def\ct{\tilde \chi}   
\def\cc{\chi}

%
%

 \subsection{Review on OIF and Airy functions}   
 
 We give in this section some notation and results introduced in~\cite{Cor-Rob}  
 following Tataru~\cite{Tataru-1998}.
%
%
\begin{lem}
	\label{lem: Symplectic transformation}
Let $(x_0', \xi_0' )\in \R^{d-1}\times \R^{d-1}$ be such that $R(x_0', 0, \xi_0' )-1=0$. For all $x_d$ in a \nhd of 0,
there exist a  smooth symplectic transformation $ \kappa :U_0\to U_1$ where $U_0$  and $U_1$ are some open set 
respectively of  $\R^{d-1}_{x'}\times \R^{d-1}_{\xi'}$  and of  $\R^{d-1}_{y'}\times \R^{d-1}_{\eta'}$,
satisfying $ (x_0', \xi_0' )\in U_0 $,  $(0,0) \in U_1$,  $\kappa(x_0', \xi_0' ) = (0,0)$, and
$ \kappa^*(\eta_1)= R(x,\xi')-1$.
 Moreover $x_d$ acts as a parameter and $\kappa$ 
is smooth with respect $x_d=y_d$.
\end{lem}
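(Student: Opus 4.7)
The plan is to reduce this to a parametrized version of the classical symplectic normal form theorem (a Darboux-type result): any smooth function on a symplectic manifold whose differential is nonzero at a point can locally be taken as one of the momentum coordinates. Here the function is $f(x', \xi') = R(x',x_d,\xi')-1$, with $x_d$ treated as a parameter.

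First I would verify the nondegeneracy needed to apply such a normal form. Since $R(x_0',0,\xi_0') = 1 > 0$ and $R$ is a positive definite quadratic form in $\xi'$ (inherited from the Riemannian metric $\tilde g$ restricted to the hypersurface), we must have $\xi_0' \neq 0$. Since $\pd_{\xi'} R = 2\tilde g^{jk}\xi_k$ is nonzero at $\xi_0' \neq 0$ by invertibility of $\tilde g$, the differential $d_{(x',\xi')}(R-1)$ is nonzero at the base point, and the Hamiltonian vector field $H_R$ on $T^*\R^{d-1}$ is nonvanishing there.

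Second I would construct $\kappa$ by the standard three-step procedure. Set $\tilde\eta_1 := R(x',x_d,\xi') - 1$, so that $\tilde\eta_1(x_0',\xi_0') = 0$. Choose a smooth hypersurface $\Sigma_0 \subset \R^{d-1}\times\R^{d-1}$ through $(x_0',\xi_0')$ transverse to $H_{\tilde\eta_1}$. On $\Sigma_0$, the restriction of the symplectic form $\omega = d\xi_j \wedge dx_j$ has rank $2(d-2)$ (since $\Sigma_0$ is a coisotropic hypersurface of codimension $1$ whose characteristic direction is spanned by $H_{\tilde\eta_1}$). By the parametrized Darboux theorem applied to the quotient of $\Sigma_0$ by the characteristic foliation, one obtains smooth functions $(y_2,\dots,y_{d-1},\eta_2,\dots,\eta_{d-1})$ on $\Sigma_0$ with $\omega|_{\Sigma_0} = \sum_{j=2}^{d-1} dy_j \wedge d\eta_j$, depending smoothly on $x_d$. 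Extend them to a neighborhood by requiring them to be constant along the flow of $H_{\tilde\eta_1}$, and define $y_1$ as the time parameter along that flow starting from $\Sigma_0$. Then by construction $\{y_1,\tilde\eta_1\} = 1$ and $\{y_j,\tilde\eta_1\} = \{\eta_j,\tilde\eta_1\} = 0$ for $j \ge 2$, and similarly the pairwise Poisson brackets among $(y_j,\eta_k)_{j,k \ge 2}$ are canonical by construction on $\Sigma_0$ and preserved under the Hamilton flow. This shows that $(y_1,y'',\tilde\eta_1,\eta'')$ are symplectic coordinates, so setting $\kappa(x',\xi') = (y_1,y'',\tilde\eta_1,\eta'')$ yields a symplectomorphism with $\kappa^*\eta_1 = \tilde\eta_1 = R - 1$.

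The main (and only) technical point is the smooth dependence on the parameter $x_d$: the choice of transverse hypersurface, the Hamilton flow of $H_R$, and the parametrized Darboux step all preserve smooth dependence on $x_d$, because $R$ depends smoothly on $x_d$ by hypothesis and the ODE for the Hamilton flow as well as the Moser-type deformation argument underlying Darboux both yield smooth parameter dependence. I would simply cite the classical references (for instance H\"ormander, Vol.~III, Theorem 21.1.6, or Duistermaat's monograph on Fourier integral operators) for the parametric normal form, rather than redo the argument in detail.
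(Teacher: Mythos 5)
Your proposal is correct and takes essentially the same route as the paper, which disposes of this lemma in one line by citing H\"ormander, Vol.~III, Theorem 21.1.6 --- precisely the parametric Darboux-type normal form you invoke; your added verification that $d(R-1)\neq 0$ (positive definiteness of $\tilde g^{jk}$ plus $R=1\Rightarrow\xi_0'\neq 0$) is a detail the paper omits. The only blemish is cosmetic: in your sketch the characteristic direction of the transverse hypersurface $\Sigma_0$ is its own symplectic orthogonal, not $H_{R-1}$ (that is the characteristic direction of the level set $\{R=1\}$), but this does not affect the standard construction.
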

This lemma is classical. We can find a proof in H\"ormander~\cite[Theorem 21.1.6]{HormanderV3-2007}. 
This means we can complete the coordinate $R-1$ in a  symplectic manner.

To avoid ambiguity even if $x_d=y_d$, we denote $x_d$ when we work in $(x,\xi')$ variables and $y_d$ otherwise. 

We call a symbol of order 0 a symbol $a\in S(1,|dx|^2+|d\xi'|^2)    
$ or in $S(1,|dy|^2+|d\eta'|^2)$. 
In this section we only use tangential symbol, but as in what follows we have to use different classes of symbols,
we prefer use everywhere the same kind of notation.

Let $\chi$ be a smooth function supported in $U_0$, $\chi\ge0$ and $\chi=1$ in a \nhd of $(x'_0,0,\xi'_0)$.
%
%
\begin{lem}
	\label{lem: FIO}
Associated with $\kappa$, there exists $F$, a semiclassical  Fourier Integral Operator 
 satisfying the following properties,
\begin{enumerate}
\item[i)] $F$ is a unitary operator uniformly with respect $x_d$.
\item[ii)] For all $\tilde a\in\Con_0^\infty( U_1) $, $F^{-1}\ops(\tilde a)F=\ops(a)$, where $a=\kappa^*\tilde a+hb$ where
$b$ is a symbol of order 0. In particular we have
$F^{-1} \ops(\eta_1\ct^2(y,\eta')) F= \ops(\cc^2(R-1)) +h\ops(b)$, where  
$\ct\in\Con_0^\infty (U_1)$, $\ct\ge 0$  and $b$ a symbol of order 0.
\item[iii)]  There exist $\theta$ a symbol of order $0$, $B$ a bounded operator on $L^2$ such that
$\ops (\theta)^*=\ops (\theta)$, $\kappa^*\tilde\chi=\chi$ and
$(\pd_{x_d} F) F^{-1}=ih^{-1}\ops (\theta)+hB$.
\item[iv)] 
If the operators $A $ and $\tilde A$ are such that \( A=F^{-1}\tilde AF \) then 
\[
\pd_{x_d}A=F^{-1}\big(  \pd_{y_d} \tilde A+ih^{-1}[\tilde A,\ops (\theta) ]+  h [  \tilde A , B ] 
\big)F
\]
where $B$ is the operator defined previously.
\item[v)]  In particular we have $\kappa^* \{ \eta_1,\theta \}= \pd_{x_d}R$ in a \nhd of $(x'_0,0,\xi_0')$. 
\end{enumerate}
\end{lem}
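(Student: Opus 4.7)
The plan is to construct $F$ as a semiclassical Fourier integral operator adapted to $\kappa$, treating $x_d=y_d$ as a smooth parameter. By Lemma~\ref{lem: Symplectic transformation} and standard symplectic geometry, on a conic neighborhood of $(x_0',\xi_0',0)$ one finds a generating function $S(x',\eta',x_d)$ such that $\kappa^{-1}(y',\eta')=(\partial_{\eta'}S,\partial_{x'}S)$. One then sets
\[
Fu(x',x_d)=\frac{1}{(2\pi h)^{d-1}}\int e^{i(S(x',\eta',x_d)-y'\cdot\eta')/h}\,a(x',\eta',x_d;h)\,u(y',x_d)\,dy'\,d\eta',
\]
with amplitude $a\sim\sum_j h^ja_j$. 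The leading choice $a_0=|\det\partial_{x'}\partial_{\eta'}S|^{1/2}$ is the standard half-density on the graph of $\kappa$, yielding $F^*F-I=O(h)$ microlocally near $(x_0',\xi_0')$. The $a_j$ for $j\geq1$ are then fixed inductively so that $F^*F=I$ exactly (after a Neumann-type correction $F\mapsto F(F^*F)^{-1/2}$ implemented symbolically), which proves item (i); the smooth dependence in $x_d$ is preserved at every step because $S$ and all $a_j$ are smooth in $x_d$.

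For item (ii) I would apply the semiclassical Egorov theorem in this parametric setting: composing $\ops(\tilde a)\circ F$, representing $\ops(\tilde a)$ as an oscillatory integral, and applying stationary phase in the composite phase produces $F\circ\ops(a)$ with a complete asymptotic expansion. The leading symbol is $\kappa^*\tilde a$, and subsequent terms are tangential symbols of order zero; collecting them produces the $h\cdot b$ remainder. Applied to $\tilde a=\eta_1\ct^2$, the identity $\kappa^*(\eta_1)=R-1$ and $\kappa^*\ct=\cc$ (both from Lemma~\ref{lem: Symplectic transformation}) give the displayed formula.

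Item (iii) comes from differentiating the oscillatory representation of $F$ in $x_d$: the leading factor is $i h^{-1}\partial_{x_d}S$ multiplying the same integrand, so $(\partial_{x_d}F)F^{-1}$ is, after applying Egorov to $F\cdot F^{-1}$, an operator of the form $ih^{-1}\ops(\theta)+hB$ whose principal symbol $\theta$ is the pullback by $\kappa$ of the restriction of $\partial_{x_d}S$ to the critical set of the composite phase. Self-adjointness of $\ops(\theta)$ is forced by differentiating $F^*F=I$ in $x_d$, which makes $(\partial_{x_d}F)F^{-1}$ antiself-adjoint; the principal symbol of an antiself-adjoint $ih^{-1}$-multiplied operator must be real, so $\theta$ can be chosen real-valued and its quantization self-adjoint modulo $O(h)$, which is absorbed into $hB$. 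Item (iv) is then a product-rule calculation: setting $C=(\partial_{x_d}F)F^{-1}$ and using $\partial_{x_d}F^{-1}=-F^{-1}(\partial_{x_d}F)F^{-1}$, one computes
\[
\partial_{x_d}(F^{-1}\tilde AF)=F^{-1}\bigl(\partial_{y_d}\tilde A+[\tilde A,C]\bigr)F,
\]
and substituting $C=ih^{-1}\ops(\theta)+hB$ yields the stated expression.

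Finally, item (v) is a consistency check obtained by applying (iv) to $\tilde A=\ops(\eta_1)$, which is independent of $y_d$. On one hand, by (ii) we have $F^{-1}\ops(\eta_1)F=\ops(R-1)+h\ops(b_1)$, so the principal symbol of the $x_d$-derivative is $\partial_{x_d}R$. On the other hand, by (iv) the derivative equals $F^{-1}(ih^{-1}[\ops(\eta_1),\ops(\theta)]+h[\ops(\eta_1),B])F$, and by the symbolic calculus the principal symbol of $ih^{-1}[\ops(\eta_1),\ops(\theta)]$ is $\{\eta_1,\theta\}$; one more application of Egorov transports this to $\kappa^*\{\eta_1,\theta\}$, which must match $\partial_{x_d}R$. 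The main technical obstacle is really item (i): achieving exact unitarity, uniformly in $x_d$, requires solving a sequence of transport equations for the $a_j$ with smooth $x_d$-dependence and then correcting by $(F^*F)^{-1/2}$ computed at the operator level; everything else (items (ii)--(v)) is a bookkeeping exercise in semiclassical Egorov calculus once $F$ is in hand.
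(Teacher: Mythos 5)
The paper does not actually prove this lemma: it defers entirely to \cite[Lemma B.2]{Cor-Rob}, whose argument is the standard generating-function/Egorov construction you outline (a phase $S(x',\eta',x_d)$ quantizing $\kappa$ with $x_d$ as a parameter, unitarization via $(F^*F)^{-1/2}$, Egorov's theorem for (ii), anti-self-adjointness of $(\partial_{x_d}F)F^{-1}$ from differentiating $F^*F=I$ for (iii), the product rule for (iv), and the consistency computation with $\tilde A=\ops(\eta_1)$ for (v)). Your sketch is correct in substance, modulo two routine points a full write-up must settle: the local reduction ensuring a generating function of the form $S(x',\eta')$ exists (and your displayed relation for $\kappa^{-1}$ is miswritten; it should read $\kappa(x',\partial_{x'}S)=(\partial_{\eta'}S,\eta')$), and the orientation of $F$ (yours maps $y'$-functions to $x'$-functions, so as written your Egorov identity is $F\ops(\tilde a)F^{-1}=\ops(a)$ rather than the lemma's $F^{-1}\ops(\tilde a)F=\ops(a)$).
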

For a proof of this lemma we refer to~\cite[Lemma B.2]{Cor-Rob}.

From now we shall use two semiclassical quantifications of symbol, one with parameter $h$ and
the other with parameter  $h^{1/3}$. 
To avoid ambiguity or confusion between both, we do not use the notation $\ops$
but we use classical quantification. For instance, for $a$ a symbol of order 0 we have $\ops (a)=\op(a(x,h\xi'))$ 
that is we keep 
the $h$ or $h^{1/3}$ in the notation. 

Let $g=|dy|^2+h^{2/3}\est{h^{1/3}\eta_1}^{-2}|d\eta'|^2$, this metric gives  symbol classes
essentially as semiclassical symbol classes with $h^{1/3}$ for
semiclassical parameter. 
We let to the reader to check that $g$ is slowly varying and $\sigma$ temperate. The 
"$h$" defined by H\"ormander associated with $g$ is $h^{1/3}\est{h^{1/3}\eta_1}^{-1}$.
It is the quantity we gain in the asymptotic expansion 
for the 
symbol calculus.
In particular the function $\est{h^{1/3}\eta_1}^{\nu}$ is a $g$-continuous and $\sigma, g$ temperate for every $\nu\in\R$.
 We refer to \cite[Chapter 18, Sections 4 and 5]{HormanderV3-2007} for  definitions used freely here.
  
To ease notation we write $| u|$ instead of $|u|_{L^2}(x_d)$ when there is no ambiguity on the fact that the 
$L^2$ norm is taken on variables $x'$ or $y'$  at point  $x_d$ or $y_d$. By the same abuse of notation we 
write the inner product $(.|.)$ instead of  $ (.|.)_{L^2(\R^{d-1})}(x_d)$.

We recall some properties of the Airy function which is denoted by $\Ai$. It verifies the equation 
$\Ai''(z) -z\Ai(z)=0$ for $z\in\C$, 
 $\Ai $ is real on the real axis and $\overline{\Ai(z)}=\Ai(\bar z)$. 
Let $\omega=e^{2i\eps \pi/3}$ for $\eps=\pm1 $. 

For $x\in\R$, let $\alpha(x)=-\omega\Ai'(\omega x)/\Ai(\omega x)\in\Con^\infty(\R)$. As the zero of $\Ai$ are on the negative 
real axis, the function $\alpha $ is well defined for $x\in\R$ and smooth.
 The function $\alpha$ 
satisfies the following properties.
%
%
\begin{lem}
	\label{lem: alpha properties}
We have
\begin{enumerate}
\item[i)] $ \alpha(x)=-\sqrt{x}+\frac{1}{4x}+b_1(x) $ for $x>0$
\item[ii)] $ \alpha(x)=\eps i\sqrt{-x}+\frac{1}{4x}+b_2(x) $ for $x<0$
\item[iii)] $\Re \alpha(x)<0 $ for all $x\in\R$
\item[iv)]  $\alpha$ satisfies the differential equation $\alpha'(x)= \alpha^2(x)-x$,
\end{enumerate}	
where $b_j\in S( \est{x}^{-5/2}, |dx|^2)$ for \(j=1,2\). 
\end{lem}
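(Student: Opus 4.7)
The plan is to work with $u(x) := \Ai(\omega x)$, which solves $u''(x) = xu(x)$ because $\omega^3 = 1$ and $\Ai''(z) = z\Ai(z)$. Since the zeros of $\Ai$ are all real and negative, while $\omega x \notin (-\infty,0]$ for $x \in \R$ (as $\omega = e^{2i\pi\epsilon/3}$), $u$ does not vanish on $\R$, so $\alpha = -u'/u$ is smooth on $\R$. Property (iv) then follows from a one-line computation:
\[
\alpha' \;=\; -\frac{u''}{u} + \Bigl(\frac{u'}{u}\Bigr)^2 \;=\; -x + \alpha^2.
\]

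For (i) and (ii), I would invoke the classical asymptotic expansion
\[
\Ai(z) \sim \frac{e^{-\zeta}}{2\sqrt{\pi}\,z^{1/4}}\bigl(1 - \tfrac{5}{72\zeta} + \cdots\bigr), \quad
\Ai'(z) \sim -\frac{z^{1/4} e^{-\zeta}}{2\sqrt{\pi}}\bigl(1 + \tfrac{7}{72\zeta} + \cdots\bigr),
\]
with $\zeta = \tfrac{2}{3} z^{3/2}$, valid uniformly in $|\arg z| \le \pi - \delta$. Dividing yields $\Ai'(z)/\Ai(z) = -z^{1/2} - 1/(4z) + O(z^{-5/2})$. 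For $x > 0$, $\arg(\omega x) = \pm 2\pi/3$ lies in the region of validity; using $\omega \cdot \omega^{1/2} = \omega^{3/2} = -1$ (principal branch) gives (i), and the successive terms of the expansion, being $O(\zeta^{-k}) = O(|x|^{-3k/2})$ and differentiable to arbitrary order, yield the symbol class $S(\langle x\rangle^{-5/2}, |dx|^2)$ for the remainder $b_1$. The case $x < 0$ of (ii) is analogous: $\arg(\omega x) = \mp \pi/3$, and the simplification $\omega\cdot(\omega x)^{1/2} = \pm i\sqrt{-x}$ produces the purely imaginary leading term $\epsilon i \sqrt{-x}$, while the $1/(4z)$ term again contributes $1/(4x)$.

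For (iii), the key is a Wronskian identity. Setting $h(x) := u'(x)\overline{u(x)}$, one has $h + \bar h = (|u|^2)' = g'$ where $g := |u|^2 > 0$. Since $u'' = xu$ with real $x$,
\[
(h - \bar h)' \;=\; u''\bar u - u\bar u'' \;=\; 0,
\]
so $h - \bar h = 2iC$ is constant. Evaluating at $x = 0$, $C = \Ai(0)\Ai'(0)\sin(2\pi\epsilon/3) \ne 0$, using $\Ai(0) > 0$ and $\Ai'(0) < 0$. Hence $\Re \alpha = -g'/(2g)$ and $\Im\alpha = -C/g$, so (iii) reduces to proving $g' > 0$ on $\R$. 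The asymptotics (i), (ii) already give $g' > 0$ for $|x|$ large. To handle the bounded range, write $u = r e^{i\theta}$; then $r^2 \theta' = C$ and $r'' = xr + C^2/r^3$. The energy
\[
E(x) \;:=\; \tfrac{1}{2}(r')^2 - \tfrac{1}{2} xr^2 + \tfrac{C^2}{2 r^2}
\]
satisfies $E'(x) = -r^2/2 < 0$, so $E$ is strictly decreasing. At any critical point $x_0$ of $r$, the arithmetic-geometric mean inequality applied to $-x_0 r_0^2/2$ and $C^2/(2 r_0^2)$ gives $E(x_0) \ge 0$ (with a sign analysis for $x_0 > 0$). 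Combining this with the boundary values $E(-\infty) = +\infty$ and $E(+\infty) = -\infty$ extracted from (i)--(ii), and analysing the sign of $r''(x_0) = x_0 r_0 + C^2/r_0^3$, rules out local extrema of $r$ and yields $r' > 0$ globally.

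The main obstacle is the final step of (iii): in the regime $x_0 < 0$, the bound $r(x_0)^2 \ge |C|/\sqrt{-x_0}$ forced at a hypothetical critical point is asymptotically sharp, since a direct computation gives $|C| = 1/(4\pi)$ and $g(x) \sim (4\pi)^{-1}|x|^{-1/2}$ as $x \to -\infty$. The contradiction must therefore be extracted from the strict monotonicity of $E$ together with the next order of the asymptotic of $g$. The other three items are routine consequences of the Airy asymptotics and direct differentiation of the defining formula.
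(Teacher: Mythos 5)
First, a remark on the comparison: the paper does not prove this lemma at all; it refers to \cite[Lemma B.3]{Cor-Rob}, so your proposal is being measured against an external proof rather than an argument in the text. Items (i), (ii) and (iv) of your proposal are correct. Point (iv) is exactly the one-line computation you give. Points (i)--(ii) follow, as you say, from the Poincar\'e expansions of $\Ai$ and $\Ai'$ in the sector $|\arg z|\le \pi-\delta$ (the rays $\arg(\omega x)=\pm 2\pi/3$ for $x>0$ and $\mp\pi/3$ for $x<0$ lie inside it), together with the standard fact that these expansions can be differentiated term by term, which yields the bounds $|\partial_x^k b_j|\lesssim \est{x}^{-5/2}$ required for $S(\est{x}^{-5/2},|dx|^2)$. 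The reduction of (iii) to $g'>0$, $g=|\Ai(\omega x)|^2$, via the constancy of $C=\Im(u'\bar u)$ is also correct, and your framework does dispose of $x\ge 0$ (there $r''(x_0)=x_0r_0+C^2/r_0^3>0$ at any critical point, and $g'(0)=-\Ai(0)\Ai'(0)>0$).

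The genuine gap is the one you flag yourself: for a hypothetical critical point $x_0<0$, the only consequence you extract is $E(x_0)\ge |C|\sqrt{-x_0}$, and this cannot produce a contradiction. Indeed $E$ decreases from $+\infty$ and satisfies $E(x)=|C|\sqrt{-x}+o(1)$ as $x\to-\infty$, so the constraint at $x_0$ is compatible with everything established about $E$; deciding the sign of $E(x)-|C|\sqrt{-x}$, equivalently of $g(x)-|C|(-x)^{-1/2}$, requires the explicit coefficient (with its sign) of the $O(|x|^{-5/2})$ term in (ii), which the proposal does not compute. As written, (iii) is therefore unproved on $x<0$. A clean way to close it is to leave the energy method: the Airy connection formulas give, for real $x$, $|\Ai(e^{\pm 2i\pi/3}x)|^2=\tfrac14\big(\Ai(x)^2+\operatorname{Bi}(x)^2\big)$, and the strict monotonicity of the Airy modulus function follows from a Nicholson-type representation. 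Namely $N(x)=\int_0^\infty t^{-1/2}e^{xt-t^3/12}\,dt$ satisfies the product equation $N'''=4xN'+2N$ (the equation satisfied by $\Ai^2$, $\Ai\operatorname{Bi}$, $\operatorname{Bi}^2$, and hence by $g$) and is non-oscillatory with $N(x)\sim\sqrt{\pi}\,(-x)^{-1/2}$ at $-\infty$, which forces $\Ai^2+\operatorname{Bi}^2=\pi^{-3/2}N$; then $(\Ai^2+\operatorname{Bi}^2)'=\pi^{-3/2}\int_0^\infty t^{1/2}e^{xt-t^3/12}\,dt>0$ everywhere, which is (iii). Alternatively one can use the first integral $2gg''-(g')^2-4xg^2\equiv 4C^2$ of that third-order equation, but it too must be combined with a second-order asymptotic input, so the integral representation is the shorter route.
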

For a proof of this lemma we refer to~\cite[Lemma B.3]{Cor-Rob}.

Let $\tilde r_d$ be such that $\kappa^*\tilde r_d= -\pd_{x_d} R$. We assume that locally $\pd_{x_d} R<0$,
 this implies that 
 $ \tilde r_d>0$ in a  \nhd of $(0,0)$.
 
  Let $\tilde a (y, \eta')=h^{1/3}\tilde \chi (y,h\eta') \tilde r_d^{1/3}(y,h\eta') \alpha(\zeta)$ 
where $\zeta= h^{1/3}\eta_1\tilde r_d^{-2/3}(y,h\eta')$.  
We assume that on the support of  $\tilde \chi (y,h\eta')$, 
$\tilde r_d(y,h\eta') >0$.
In what follows we denote $\rho=(x,h\xi')$ and $\tilde \rho=(y,h\eta')$.
We define $\tilde A=\op(\tilde a(y,\eta') )$,    $\tilde \Psi =h^{-1/3}\op (\est{h^{1/3}\eta_1}^{-1/2})$, and let 
$A=F^{-1} \tilde A  F$, $\Psi=F^{-1}\tilde \Psi F$.  

We have $\tilde a\in S(h^{1/3}\est {h^{1/3}\eta_1}^{1/2}  ,g)$  as 
$h^{-1/3} \est{h^{1/3}\eta_1}^{-1/2}\in S(h^{-1/3} \est{h^{1/3}\eta_1}^{-1/2},g)$
and from Lemma~\ref{lem: alpha properties}. Observe that $\tilde a$ is bounded as on the support of 
$\tilde \chi (y,h\eta')$, $|\eta_1|\le h^{-1} $ and $ \est{h^{1/3}\eta_1}^{1/2}\le h^{-1/3}$. 
We deduce that $\tilde A$ is bounded on $L^2$.

We state the following lemma proved in~\cite{Cor-Rob}, see Lemma~C.1.
\begin{lem}
	\label{lem: estimate by tilde Psi}
Let $\tilde \chi_3\in\Con_0^\infty(U_1)$ be such that $(1-\tilde \chi_3)\tilde \chi_1=0$ where $\kappa^* \tilde \chi_1=\chi_1$.
  We have 
\begin{align*}
& |\tilde  \Psi \op (\tilde \chi_1(\tilde \rho))w | 
\lesssim  |h^{-1/3}\op(\tilde \chi_3(\tilde \rho) \est{h^{1/3}\eta_1}^{-1/2}) \op (\tilde \chi_1(\tilde \rho))w| + |w| ,
\end{align*}
for all $w$ in $L^2$.
\end{lem}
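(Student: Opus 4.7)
The plan is to split $\tilde\Psi\op(\tilde\chi_1)$ into the localized piece appearing on the right-hand side of the claimed inequality, plus an error that is $L^2$-bounded uniformly in $h$. Since $\tilde\Psi=h^{-1/3}\op(\est{h^{1/3}\eta_1}^{-1/2})$ is a pure Fourier multiplier, I would write
\[
\tilde\Psi\op(\tilde\chi_1(\tilde\rho))
= h^{-1/3}\op(\tilde\chi_3(\tilde\rho)\est{h^{1/3}\eta_1}^{-1/2})\op(\tilde\chi_1(\tilde\rho))
 + h^{-1/3}\op((1-\tilde\chi_3(\tilde\rho))\est{h^{1/3}\eta_1}^{-1/2})\op(\tilde\chi_1(\tilde\rho)),
\]
so that the triangle inequality reduces the claim to showing that the second summand defines an operator bounded on $L^2$ uniformly in $h$.

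The hypothesis $(1-\tilde\chi_3)\tilde\chi_1=0$ says precisely that the symbols $(1-\tilde\chi_3)\est{h^{1/3}\eta_1}^{-1/2}\in S(\est{h^{1/3}\eta_1}^{-1/2},g)$ and $\tilde\chi_1\in S(1,g)$ have disjoint supports in phase space, with respect to the admissible metric $g=|dy|^2+h^{2/3}\est{h^{1/3}\eta_1}^{-2}|d\eta'|^2$ introduced earlier in this section (already verified to be slowly varying and $\sigma$-temperate, with Planck function $h_g=h^{1/3}\est{h^{1/3}\eta_1}^{-1}$). By the standard pseudolocality of the calculus associated with $g$, every term of the formal asymptotic expansion of the composition is a sum of products of derivatives of the two symbols evaluated at the same point, and therefore vanishes identically. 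Consequently
\[
\op\bigl((1-\tilde\chi_3)\est{h^{1/3}\eta_1}^{-1/2}\bigr)\op(\tilde\chi_1) = \op(s_h), \qquad s_h\in S(h_g^N,g)\ \text{for every } N,
\]
and in particular the $L^2\to L^2$ norm of this composition is $O(h^{N/3})$ for every $N\ge 1$.

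Taking $N=1$ is already enough to absorb the prefactor $h^{-1/3}$, so the second summand in the decomposition is bounded on $L^2$ uniformly in $h\in(0,1]$; applying it to $w$ produces a term $\lesssim |w|$, and the stated estimate follows. The only technically delicate step is the pseudolocality assertion: disjoint-support implies smoothing-to-all-orders in the class $S(\cdot,g)$. This is routine via integration by parts in the oscillatory integral defining the composition, provided the two supports are separated by a positive $g$-distance---which one can always arrange, if it is not already present, by sandwiching $\tilde\chi_1$ between two intermediate cutoffs equal to it on its support and still annihilated by $1-\tilde\chi_3$.
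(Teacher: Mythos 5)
Your proof is correct, and it is the natural argument; note that the paper itself does not prove this lemma but simply cites \cite{Cor-Rob}, Lemma~C.1, so there is no internal proof to compare against. Your decomposition of $\tilde\Psi\op(\tilde\chi_1(\tilde\rho))$ by inserting $1=\tilde\chi_3+(1-\tilde\chi_3)$ into the multiplier is exact by linearity of the quantization, and the whole content is indeed that $h^{-1/3}\op\bigl((1-\tilde\chi_3(\tilde\rho))\est{h^{1/3}\eta_1}^{-1/2}\bigr)\op(\tilde\chi_1(\tilde\rho))$ is $O(1)$ on $L^2$; since $(1-\tilde\chi_3(\tilde\rho))\est{h^{1/3}\eta_1}^{-1/2}\in S(\est{h^{1/3}\eta_1}^{-1/2},g)$ and $\tilde\chi_1(\tilde\rho)\in S(1,g)$ (both facts use that $|h\eta'|$ is bounded on the supports of $\tilde\chi_1$ and of $\nabla\tilde\chi_3$, so $h^{2/3}\est{h^{1/3}\eta_1}\lesssim 1$ there), the composition theorem in the Weyl--H\"ormander calculus with Planck function $h_g=h^{1/3}\est{h^{1/3}\eta_1}^{-1}$ gives a symbol in $S(h_g^N\est{h^{1/3}\eta_1}^{-1/2},g)\subset S(h^{N/3},g)$ once all expansion terms vanish, and $N=1$ already absorbs the prefactor $h^{-1/3}$. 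The only point where you overcomplicate matters is the final caveat about positive $g$-separation of the supports: it is not needed. The remainder estimate in the composition theorem holds for arbitrary symbols in the admissible classes, with no support condition; what must be checked is only that each term $\partial^\alpha\bigl((1-\tilde\chi_3)\est{h^{1/3}\eta_1}^{-1/2}\bigr)\,\partial^\beta\tilde\chi_1$ of the expansion vanishes identically, and this already follows from $(1-\tilde\chi_3)\tilde\chi_1=0$: that identity forces $\tilde\chi_3\equiv 1$ on the open set $\{\tilde\chi_1\ne0\}$, hence $1-\tilde\chi_3$ and all derivatives $\partial^\alpha\tilde\chi_3$ vanish on its closure $\supp\tilde\chi_1$ by continuity, while every $\partial^\beta\tilde\chi_1$ is supported in $\supp\tilde\chi_1$. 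So the ``sandwiching'' device in your last sentence, which as phrased is in any case somewhat vague, can simply be deleted.
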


%
%

\subsection{Estimates at the boundary, glancing points}

Let $\chi_0$ be a smooth cutoff in a \nhd of $(x'_0, 0,\xi'_0)$ such that $\chi_0$ is  supported  in the set where 
$\chi=1$ and $\chi_0=1$ in a \nhd of $(x'_0, 0,\xi'_0)$.
We consider the solution $(r_h, u_h^d)$ solution of~\eqref{eq: syst semi-class 2 by 2 bis}. We set 
$\rr=\ops(\chi_0)r_h$ and $\vv=\ops(\chi_0)u_h^d$.
We have 
\begin{align*}
&\| \ops(\chi_0)(\ops((1-\chi)^2)(R-1))r\|\lesssim h\|r_h\| , \\
& \| [\ops(\chi_0),\ops(\chi^2(R-1))]r\|\lesssim h\|r_h\|  , \\
& \| [\ops(\chi_0),hD_{x_d} ]( r_h,u_h^d)\|\lesssim h(\|r_h\| +\|u_h^d  \|).
\end{align*}
Applying  $\ops(\chi_0) $ on System~\eqref{eq: syst semi-class 2 by 2 bis}, we obtain  the system 
\begin{align} 
	\label{Small System}
&hD_{x_d}\rr+\vv=h\ff  \\
&hD_{x_d}\vv-\op(\cc^2(x,h \xi') (R(x,h\xi)-1))\rr=h\kk, \notag\\
&\text{where }\| \ff \|+\| \kk\| \lesssim \| q_h\|+\|f\|+\|r_h\|+\|u_h^d \|\notag \\
&\text{and }\rr=\ops(\chi_0)r_h,\ \vv=\ops(\chi_0)u_h^d.\notag
\end{align}

%
%
\begin{prop}
\label{prop: first estimate glancing}
Let $\rr$ and $\vv$ satisfying system~\eqref{Small System}.
There exists $C_0>0$  such that 
\begin{multline*}
| i\vv +A \rr  |^2(0)+\|  \Psi ( i\vv +A \rr )\|_{L^2(x_d>0)}^2
\\ \le C_0\big(  \|u_h^d \|_{L^2(x_d>0)}^2+\|r_h\|_{L^2(x_d>0)}^2+ \|\kk\|_{L^2(x_d>0)} ^2
+\|\ff\|_{L^2(x_d>0)}^2\big).
\end{multline*}
\end{prop}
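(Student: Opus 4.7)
The plan is to treat the scalar second-order equation satisfied by $\rr$ (obtained by eliminating $\vv$ from System~\eqref{Small System}) via a first-order factorization using the Airy operator $A$ as a ``square root'' of $\op(\cc^2(R-1))$. The essential algebraic input is the Riccati identity $\alpha'(\zeta) = \alpha(\zeta)^2 - \zeta$ from Lemma~\ref{lem: alpha properties}(iv). Conjugating by the FIO $F$ that implements the symplectic transform $\kappa$, and combining items (ii)--(v) of Lemma~\ref{lem: FIO}, one obtains the operator identity
\[
A^2 + h\pd_{x_d}A \;=\; \op(\cc^2(R-1)) + E,
\]
where $E$ is a remainder whose action on $\rr$ is controlled by $h\bigl(\|u_h^d\|+\|r_h\|+\|\ff\|+\|\kk\|\bigr)$. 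Here the cancellation of the subprincipal contributions crucially uses Lemma~\ref{lem: FIO}(v), namely $\kappa^*\{\eta_1,\theta\} = \pd_{x_d}R$, to pair the commutator $i[\tilde A,\op(\theta)]$ against the appropriate $y_d$-derivatives.

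With this factorization in hand, set $z := i\vv + A\rr$. A direct differentiation using System~\eqref{Small System} then gives
\[
(hD_{x_d} - iA)z \;=\; hA\ff + ih\kk - iE\rr \;=:\; H,
\]
and $\|H\|_{L^2(x_d>0)} \lesssim h\bigl(\|\ff\|+\|\kk\|+\|u_h^d\|+\|r_h\|\bigr)$ since $A$ is bounded on $L^2$. Next, I take the $L^2(x_d>0)$ inner product of this identity with $z$ and extract imaginary parts. The integration-by-parts formula~\eqref{eq: integration parts functions} yields $\Im(hD_{x_d}z,z) = \tfrac{h}{2}|z(0)|^2$, whence
\[
\tfrac{h}{2}|z(0)|^2 \;-\; \tfrac{1}{2}\bigl((A+A^*)z,z\bigr) \;=\; \Im(H,z).
\]

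The positivity that drives the estimate is Lemma~\ref{lem: alpha properties}(iii): $\Re\alpha<0$ on $\R$, so the principal symbol of $(A+A^*)/2$ is negative. A sharp G{\aa}rding inequality in the Airy-adapted symbol class, combined with Lemma~\ref{lem: estimate by tilde Psi} to convert the pointwise symbol bound into a $\Psi$-weighted norm, gives
\[
-\tfrac{1}{2}\bigl((A+A^*)z,z\bigr) \;\ge\; c\|\Psi z\|^2 - Ch\|z\|^2.
\]
Bounding $|\Im(H,z)|\le\|H\|\|z\|$ and using the trivial estimate $\|z\| \lesssim \|u_h^d\|+\|r_h\|$, absorbing the lower-order term $Ch\|z\|^2$, and finally dividing the boundary contribution by $h/2$ and invoking AM--GM yields the stated estimate
\[
|z(0)|^2 + \|\Psi z\|^2 \;\lesssim\; \|u_h^d\|^2+\|r_h\|^2+\|\ff\|^2+\|\kk\|^2.
\]

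The principal obstacle is the first step: controlling $E$ rigorously requires a careful computation in the two coexisting symbol classes (the standard semiclassical one with small parameter $h$, and the Airy-adapted one with small parameter $h^{1/3}\est{h^{1/3}\eta_1}^{-1}$), with the subprincipal terms tracked uniformly across the glancing region $\zeta\approx 0$ where $\alpha$ transitions between its hyperbolic and elliptic asymptotics. The second delicate point is the G{\aa}rding estimate itself: because $-\Re\alpha$ decays like $|\zeta|^{-1}$ in the hyperbolic region while growing like $\sqrt{\zeta}$ in the elliptic one, the choice of the weight $\Psi$ is dictated by this non-uniform behaviour and the use of Lemma~\ref{lem: estimate by tilde Psi} is indispensable to obtain a uniform lower bound.
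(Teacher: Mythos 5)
Your overall architecture coincides with the paper's: the paper likewise computes $\tfrac12\pd_{x_d}|i\vv+A\rr|^2$ (equivalent to your pairing of $(hD_{x_d}-iA)z$ with $z$ and taking imaginary parts), uses $\Re\alpha<0$ together with Fefferman--Phong and Lemma~\ref{lem: estimate by tilde Psi} to produce the $\Psi$-term, and uses the Riccati identity $\alpha'=\alpha^2-\zeta$ to compare $A^2$ with $\op(\cc^2(R-1))$. However, there is a genuine gap in your treatment of the remainder $E=A^2-\op(\cc^2(R-1))+h\pd_{x_d}A$. The Riccati identity gives
\begin{equation*}
\tilde a^2 = h\eta_1\tilde\chi^2(\tilde\rho)+h^{2/3}\tilde\chi^2(\tilde\rho)\tilde r_d^{2/3}(\tilde\rho)\,\alpha'(\zeta),
\end{equation*}
and the second term is of size $h^{2/3}\est{h^{1/3}\eta_1}^{-1/2}$, not $O(h)$; likewise $h\pd_{x_d}A$ contains $i[\tilde A,\op(\theta)]$, whose symbol $h\{\tilde a,\theta\}$ is again only $O(h\cdot h^{-1/3}\est{h^{1/3}\eta_1}^{-1/2})$. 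The cancellation you invoke via Lemma~\ref{lem: FIO}~v) does not occur at this order: the contribution of $\{\eta_1,\theta\}=-\tilde r_d$ to $h\{\tilde a,\theta\}$ is $-h^{5/3}\tilde\chi\tilde r_d^{2/3}\alpha'(\zeta)$, one full power of $h$ smaller than the $h^{2/3}$ term it would need to cancel (identity v) is actually needed in Proposition~\ref{lem: alpha airy}, not here). Consequently $\|E\rr\|\lesssim h^{2/3}\|\rr\|$ at best, and the crude bound $|\Im(H,z)|\le\|H\|\|z\|$ leaves you, after dividing by $h$, with an uncontrolled term of size $h^{-1/3}\|\rr\|\,\|z\|$. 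The argument only closes if you use the structure of $h^{-1}E$: its symbol lies in $S(h^{-1/3}\est{h^{1/3}\eta_1}^{-1/2},g)$, i.e.\ it is $\tilde\Psi$ times a bounded operator, so that $h^{-1}|(E\rr,z)|\lesssim\|\rr\|\,\|\Psi z\|$, which is then absorbed into the G\aa rding gain by Young's inequality. This absorption (the paper's terms $I_2$ and $I_3$) is the heart of the proof and is missing from your write-up.

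Two further points. First, your G\aa rding lower bound is off by a factor of $h$: since $-\Re\tilde a\gtrsim h^{1/3}\tilde\chi^2\est{h^{1/3}\eta_1}^{-1}=h\cdot\bigl(h^{-1/3}\tilde\chi\est{h^{1/3}\eta_1}^{-1/2}\bigr)^2$, the correct statement is $-\tfrac12((A+A^*)z,z)\ge c\,h\|\Psi z\|^2-Ch\|z\|^2$; as written your inequality is false, although with the $h$ restored the division by $h/2$ at the end still yields the proposition. Second, the integration by parts in $x_d$ over the half-line produces, besides the trace at $x_d=0$, a contribution at the upper limit; the paper handles this by integrating the differential inequality from $0$ to $\sigma$ and then averaging over $\sigma>0$ to control $|z|^2(\sigma)$ by $\|u_h^d\|^2_{L^2(x_d>0)}+\|r_h\|^2_{L^2(x_d>0)}$. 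Your argument should account for this as well.
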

\begin{proof}
We compute
\begin{align*}
\frac12\pd _{x_d}|i\vv +A \rr|^2 &=\Re(i\pd _{x_d}\vv +(\pd _{x_d}A)\rr +A\pd _{x_d}\rr | i\vv +A \rr)\\
& =\Re(-h^{-1} (\op(\cc^2(\rho)(R(\rho)-1))\rr+h\kk  )   +(\pd _{x_d}A)\rr +ih^{-1}A(- \vv+h\ff    )  |  i\vv +A \rr  )\\
&=I_1+I_2+I_3+I_4,
\end{align*}
where
\begin{align*}
&I_1= \Re(-h^{-1}A( i\vv +A \rr )  |  i\vv +A \rr  )  \\
&I_2=   \Re(h^{-1}  (-\op(\cc^2(\rho)(R(\rho)-1))\rr +A^2 \rr )|    i\vv +A \rr  ) \\
&I_3=    \Re(  (\pd _{x_d}A)\rr   |    i\vv +A \rr   ) \\
&I_4=      \Re(- \kk + iA\ff  |    i\vv +A \rr     ).
\end{align*}
We set $\ww=F(    i\vv +A \rr )$. As $\tilde A$ and then $A$ are bounded on $L^2$ we have $|\ww|\lesssim |\vv|+|\rr|$.
We now estimate the terms $I_k$.
\begin{align*}
I_1&= \Re(-h^{-1}FAF^{-1}\ww |  \ww )  \\
&= \Re(-h^{-1} \tilde A \ww  | \ww).
\end{align*}
From Lemma~\ref{lem: alpha properties} 
 we have
\[
- h^{-1} \Re \tilde a(y,\eta) \ge \delta  h^{-2/3} \tilde \chi^2(\tilde \rho)\est{h^{1/3}\eta_1}^{-1},
  \]
and $h^{-1} \Re   \tilde  a(y,\eta) \in S( h^{-2/3} \est{h^{1/3}\eta_1}^{1/2} ,g)$. 

Then from Fefferman-Phong inequality 
(see \cite[Theorem 18.6.8]{HormanderV3-2007})  and as the real part of symbol of 
\(
\op( h^{-1/3} \tilde \chi(\tilde \rho)\est{h^{1/3}\eta_1}^{-1/2})^*  \op(h^{-1/3} \tilde \chi(\tilde \rho)\est{h^{1/3}\eta_1}^{-1/2})
\)
 is 
\(
 h^{-2/3} \tilde \chi^2(\tilde \rho)\est{h^{1/3}\eta_1}^{-1}
\)
modulo an operator bounded on $L^2$, 
we have
\begin{equation*}
I_1\ge   \delta | h^{-1/3} \op ( \tilde \chi (\tilde \rho)\est{h^{1/3}\eta_1}^{-1/2})   \ww |^2 -C | \ww|^2, 
\end{equation*}
for $C>0$. 
From definition of $\ww$ we have 
$\ww= \op ( \tilde \chi_0 (\tilde \rho))(iFu_h^d+\tilde AFr_h) +[\tilde A, \op ( \tilde \chi_0 (\tilde \rho))]Fr_h$.
We can apply  Lemma~\ref{lem: estimate by tilde Psi} with 
$ \op (\tilde \chi_1(\tilde \rho))w= \op ( \tilde \chi_0 (\tilde \rho))(iFu_h^d+\tilde AFr_h)$ and as 
$[\tilde A, \op ( \tilde \chi_0 (\tilde \rho))]$ is an operator with symbol in $S(h^{2/3}\est {h^{1/3}\eta_1}^{-1/2} , g)$.
Then $\tilde \Psi [\tilde A, \op ( \tilde \chi_0 (\tilde \rho))] $ is a bounded operator on $L^2$. We then obtain
\begin{equation*}
I_1\ge   \delta | \tilde \Psi   \op ( \tilde \chi_0 (\tilde \rho))(iFu_h^d+\tilde AFr_h)  |^2 -C\big(  |u_h^d |^2+|r_h|^2\big).
\end{equation*}
Same argument allows us to replace $  \op ( \tilde \chi_0 (\tilde \rho))(iFu_h^d+\tilde AFr_h) $ by $\ww$. 
Finally we have
\begin{equation}
	\label{est: I 1}
I_1\ge   \delta | \tilde \Psi   \ww |^2 -C\big(  |u_h^d |^2+|r_h|^2\big).
\end{equation}

 We have 
\begin{align*}
I_2&=      \Re \big(     h^{-1}F(-\op(\cc^2(\rho)(R(\rho)-1))+A^2)  \rr   | \ww \big) \\
&=   \Re \big(     h^{-1}(-\op(h\eta_1\tilde \chi^2(\tilde \rho) )+\tilde A^2)  F\rr   | \ww\big)
+    \Re \big(F \op(b(\rho))  \rr    | \ww\big)  ,
\end{align*}
where $ \op(b(\rho)) $ is bounded on $L^2$ (see Lemma~\ref{lem: FIO}).

The symbol of $\tilde A^2$ is $\tilde a^2\in S(h^{2/3} \est{h^{1/3}\eta_1} ,g)$ 
modulo a term in $S(h,g)$. From definition of $\tilde a$ 
and Lemma~\ref{lem: alpha properties}  
we have 
\begin{align*}
\tilde a^2&= h^{2/3} \tilde \chi^2(\tilde \rho)\tilde r_d^{2/3}(\tilde \rho)\alpha^2(\zeta)   \notag  \\
&=  h^{2/3} \tilde \chi^2(\tilde \rho)\tilde r_d^{2/3}(\tilde \rho)\big(  \zeta 
+  \alpha'(\zeta)  \big)  \notag  \\
&= h\eta_1 \tilde \chi^2(\tilde \rho)+   h^{2/3} \tilde \chi^2(\tilde \rho)\tilde r_d^{2/3}(\tilde \rho)  \alpha'(\zeta).
\end{align*}
We have
\[
h^{-1/3} \tilde \chi^2(\tilde \rho)\tilde r_d^{2/3}(\tilde \rho)  \alpha'(\zeta) \in S(h^{-1/3}\est{h^{1/3}\eta_1}^{-1/2}, g),
\]
we then obtain 
\begin{equation}
	\label{est: I 2}
|I_2|\lesssim   |\rr| \big(  | \tilde \Psi \ww | +  |\ww |   \big).
\end{equation}
We have 
\begin{align*}
I_3&= -  \Re \big( F ( \pd_{x_d}A ) F^{-1} F \rr        | \ww  \big)  \\
&=  -  \Re \big(  ( \op(\pd_{y_d} \tilde a(\tilde \rho) )+ih^{-1}[ \op(\tilde a(\tilde \rho) ),\op(\theta(\tilde \rho))]  F \rr       
 | \ww\big) \\
&\quad  -  \Re \big(  h[ \op(\tilde a(\tilde \rho) ) , B]    F \rr     | \ww \big)   ,
\end{align*}
from Lemma~\ref{lem: FIO}. 

Observe that 
\(
\tilde a(\tilde \rho)\in S(1,g)
\)
as $h^{1/3}\est{h^{1/3}\eta_1}^{1/2}$ is bounded on support of  $\tilde\chi(\tilde\rho)$. Then 
\(
\op(\pd_{y_d}  \tilde a(\tilde \rho) )
\)
and 
\(
 h[ \op(\tilde a(\tilde \rho) ) , B] 
\)
are bounded operators on $L^2$.

From properties of $\alpha$ and symbolic calculus,
the symbol of 
\[
h^{-1}[ \op(\tilde a(\tilde \rho) ),\op(\theta(\tilde \rho))] 
\text{ 
is in  }
S(h^{-1/3}\est{h^{1/3}\eta_1}^{-1/2},g).
\]
 Then   we obtain  
 \begin{equation}
		\label{est: I 3}
|I_3|\lesssim   |\rr| \big(  | \tilde \Psi \ww | +  |\ww|   \big).
\end{equation}

We have 
\begin{equation}
	\label{est: estim I4}
|I_4|=|  \Re(- \kk + iA\ff  |    i\vv +A \rr     )|\lesssim (|\kk| +|\ff|) (|\vv| +|\rr|).
\end{equation}

From \eqref{est: I 1}---\eqref{est: estim I4} we have 
\begin{align*}
\frac12\pd _{x_d}|i\vv +A \rr|^2 \ge   \delta | \tilde \Psi   \ww |^2 -C\big( 
  |\kk| ^2+|\ff|^2 + |u_h^d |^2+|r_h|^2\big).
\end{align*}
Integrating this inequality between 0 and \( \sigma>0\) 
we have
\begin{multline*}
| i\vv +A \rr  |^2(0)+\delta'\int_0^\sigma |  \tilde \Psi \ww|^2(x_d)dx_d
\\ \lesssim  \|u_h^d \|_{L^2(x_d>0)}^2+\|r_h\|_{L^2(x_d>0)}^2+ \|\kk\|_{L^2(x_d>0)} ^2
+\|\ff\|_{L^2(x_d>0)}^2+|u_h^d|^2(\sigma)+|r_h|^2(\sigma).
\end{multline*}
Integrating this inequality between  two positive values of \(\sigma\) and as 
\(
 |  \tilde \Psi  \ww|(y_d)= |  \Psi (  i\vv +A \rr  ) |(x_d)
\)
 we obtain the result.
\end{proof}

We recall the notation $\tilde a (y, \eta')=h^{1/3}\tilde \chi (\tilde \rho) \tilde r_d^{1/3} (\tilde \rho)\alpha(\zeta)$ 
where $\trho=(y,h\eta')$, $\zeta= h^{1/3}\eta_1\tilde r_d^{-2/3}(\tilde \rho)$ and by assumption, $ \tilde r_d (\tilde \rho)>0$ 
on the support of $\tilde \chi (\tilde \rho)$. We state the following lemma, see \cite[Lemma B.5]{Cor-Rob} for a proof.
%
%
\begin{lem}
	\label{lem: properties beta}
There exists a function $\beta\in\Con^\infty(\R)$ satisfying the following properties
\begin{enumerate}
\item[i)]  $ -\beta'-\beta\Re \alpha\ge 0$
\item[ii)] $ \beta\in S(\est{x}^{-1/4},|dx|^2)$
\item[iii)]  $ \beta\gtrsim \est{x}^{ -1/4}$.  
\end{enumerate}
\end{lem}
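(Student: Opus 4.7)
The plan is to look for $\beta$ in the multiplicative form
\[
\beta(x)=\langle x\rangle^{-1/4}\,e^{-\psi(x)},
\]
where $\psi$ is a smooth, bounded, non-negative function still to be specified. With this ansatz, conditions (ii) and (iii) become almost automatic: once $\psi$ is bounded we get $e^{-\|\psi\|_\infty}\langle x\rangle^{-1/4}\leq\beta\leq\langle x\rangle^{-1/4}$, which gives (iii) and the size estimate in (ii); the symbolic bounds on all derivatives of $\beta$ then follow from the Leibniz rule, because $\partial^k\langle x\rangle^{-1/4}\in S(\langle x\rangle^{-1/4-k},|dx|^2)$ and the derivatives of $e^{-\psi}$ will be bounded provided all derivatives of $\psi$ of order $\geq 1$ decay at infinity.

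The whole problem therefore reduces to the scalar inequality obtained from (i) after dividing by $\beta>0$:
\[
\psi'(x)\;\geq\;g(x)\;:=\;\Re\alpha(x)-\frac{x}{4\langle x\rangle^2}.
\]
The key observation is that $g$ decays much faster than either of its defining terms, thanks to the cancellation of the leading $\frac{1}{4x}$-part. Indeed, the algebraic identity $\frac{1}{4x}-\frac{x}{4\langle x\rangle^2}=\frac{1}{4x\langle x\rangle^2}$ combined with Lemma~\ref{lem: alpha properties}(i)--(ii) gives
\[
g(x)=-\sqrt{x}+b_1(x)+\frac{1}{4x\langle x\rangle^2}\quad(x>0),\qquad
g(x)=\Re b_2(x)+\frac{1}{4x\langle x\rangle^2}\quad(x<0),
\]
so that $g(x)\leq 0$ for $x$ large positive while $g\in S(\langle x\rangle^{-5/2},|dx|^2)$ for $x$ large negative, and $g$ is smooth and bounded on any compact set.

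From this it suffices to produce a smooth non-negative majorant $h$ of $g$ which is integrable on $\mathbb{R}$: then $\psi(x):=\int_{-\infty}^x h(s)\,ds$ is smooth, non-decreasing, non-negative, bounded by $\int_\mathbb{R}h$, and satisfies $\psi'=h\geq g$. A convenient explicit choice is $h(x)=A\langle x\rangle^{-5/2}$ with $A$ large enough that $h\geq g$ everywhere. For this we use that $g(x)\leq C_1\langle x\rangle^{-5/2}$ for $|x|$ large (from the expansion above, with $C_1$ controlled by the $S(\langle x\rangle^{-5/2})$-seminorm of $b_2$ and the constant in $1/(4x\langle x\rangle^2)$) and that $g$ is bounded on any compact set, so the inequality is forced on the complement of a compact set and then enforced on the compact set by taking $A$ large.

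Finally, the derivatives $\psi^{(\ell)}=h^{(\ell-1)}$ belong to $S(\langle x\rangle^{-5/2-(\ell-1)},|dx|^2)$ for $\ell\geq1$, hence all derivatives of $e^{-\psi}$ are bounded by the Faà di Bruno formula, and (ii) is established. The main — quite mild — obstacle is only the uniform majorization step: ensuring by a compactness/continuity argument that the single constant $A$ can be chosen large enough to dominate $g$ both on the compact ``middle'' region and at infinity simultaneously. All the rest is a mechanical verification of (i), (ii), (iii) on the explicit $\beta=\langle x\rangle^{-1/4}\exp\!\bigl(-\int_{-\infty}^{\cdot}A\langle s\rangle^{-5/2}\,ds\bigr)$.
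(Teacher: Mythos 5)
The paper does not actually prove this lemma --- it cites \cite[Lemma B.5]{Cor-Rob} --- so there is nothing internal to compare against; I have therefore checked your argument on its own terms, and it is correct and self-contained. The reduction is exact: with $\beta=\est{x}^{-1/4}e^{-\psi}$ one has $-\beta'/\beta=\tfrac{x}{4\est{x}^2}+\psi'$, so (i) is equivalent to $\psi'\ge \Re\alpha-\tfrac{x}{4\est{x}^2}=g$, and your identification of the cancellation $\tfrac{1}{4x}-\tfrac{x}{4\est{x}^2}=\tfrac{1}{4x\est{x}^2}$ is the essential point: without it one only gets $g\lesssim\est{x}^{-1}$ on $x<0$ (from $\Re\alpha<0$ alone), which is not integrable, whereas with it $g\le \Re b_2\le C\est{x}^{-5/2}$ there since $\tfrac{1}{4x\est{x}^2}<0$ for $x<0$, and $g\le 0$ for large $x>0$. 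The choice $\psi'=A\est{s}^{-5/2}$ then gives a bounded, monotone $\psi$, and (ii), (iii) follow as you say, noting that $S(m,|dx|^2)$ only requires $|\partial^k\beta|\le C_k m$ with no gain per derivative, which your Leibniz/Fa\`a di Bruno step delivers. Two minor points worth making explicit in a write-up: the expansions i)--ii) of Lemma~\ref{lem: alpha properties} must be read as valid for $|x|$ bounded away from $0$ (otherwise the $\tfrac{1}{4x}$ term is singular while $\alpha$ is smooth), but your separate treatment of the compact middle region via smoothness of $\alpha$ covers this; and one should state that $A$ is chosen after fixing the radius $R$ beyond which the asymptotics hold, exactly the compactness step you flag.
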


Let
$\tilde c(y,\eta')=h^{-1/6}\tilde \chi_2(\tilde \rho)\beta(\zeta)$, where $\tilde \chi_2$ is supported
on $\{ \tilde \chi=1 \}$ and $\tilde \chi_2=1$ on a \nhd of $(0,0)$.
We have $\tilde a\in S(h^{1/3}\est {h^{1/3}\eta_1}^{1/2}  ,g)$ and $\tilde c\in S( h^{-1/6} \est {h^{1/3}\eta_1} ^{-1/4},g)  $.
 
We define  $\tilde C=\op (\tilde c ) $ and $C=F^{-1} \tilde C  F$.
%
%
\begin{prop}
\label{lem: alpha airy}
Let $\rr$ and $\vv$ satisfying system~\eqref{Small System}.
There exists $C_0>0$  such that 
\[
|C( i\vv +A \rr )  |^2(0)
\le C_0\big(   \| \kk  \|_{L^2(x_d>0)}^2 +   \|  \ff \|_{L^2(x_d>0)}^2 +  \| r_h  \|_{L^2(x_d>0)}^2 
+  \| u_h^d  \|_{L^2(x_d>0)}^2 \big).
\]
\end{prop}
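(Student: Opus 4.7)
The strategy mirrors that of Proposition~\ref{prop: first estimate glancing}: derive a lower bound on $\partial_{x_d}|C(i\vv + A\rr)|^2$, integrate in $x_d$, and extract the boundary trace. Set $z = i\vv + A\rr$ and $\tilde w = F z$. Since $F$ is unitary uniformly in $x_d$ (Lemma~\ref{lem: FIO}(i)), $|C z|(x_d) = |\tilde C \tilde w|(y_d)$ with $x_d = y_d$, so it suffices to bound $|\tilde C \tilde w|^2(0)$.

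First I would derive the effective evolution equation for $z$. Substituting $\vv = -iz + iA\rr$ into the Small System exactly as in the proof of Proposition~\ref{prop: first estimate glancing} gives
\[
h D_{x_d} z = i A z + i[\op(\chi^2(R-1)) - A^2]\rr - i h(\partial_{x_d}A)\rr + i h \kk + h A \ff.
\]
By Lemma~\ref{lem: FIO}(ii) combined with the Airy identity $\alpha^2(\zeta) = \zeta + \alpha'(\zeta)$ of Lemma~\ref{lem: alpha properties}(iv), the ``Airy remainder'' $\op(\chi^2(R-1)) - A^2$ conjugated by $F$ has symbol $-h^{2/3}\tilde\chi^2 \tilde r_d^{2/3}\alpha'(\zeta) + O(h)$ in $S(h^{2/3}\est{h^{1/3}\eta_1}^{-1/2}, g)$, hence bounded on $L^2$. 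Similarly the operator $h\partial_{x_d}A$ is reduced via Lemma~\ref{lem: FIO}(iv) to a tangential bounded correction. Transporting via Lemma~\ref{lem: FIO}(iii)-(iv), this yields
\[
\partial_{y_d}\tilde w = -h^{-1}\tilde A\tilde w + i h^{-1}\op(\theta)\tilde w + R_h,
\]
with $R_h$ such that $\|R_h\|_{L^2(y_d>0)} \lesssim \|\ff\| + \|\kk\| + \|r_h\| + \|u_h^d\|$.

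Next I would compute
\[
\tfrac12 \partial_{y_d}|\tilde C\tilde w|^2 = \Re\bigl((\partial_{y_d}\tilde C)\tilde w + \tilde C \partial_{y_d}\tilde w \,\big|\, \tilde C\tilde w\bigr).
\]
The self-adjointness of $\op(\theta)$ (Lemma~\ref{lem: FIO}(iii)) kills the $\op(\theta)$-contribution from $\tilde C\partial_{y_d}\tilde w$ up to commutators $[\tilde C,\op(\theta)]$, which gain a factor of $h^{1/3}\est{h^{1/3}\eta_1}^{-1}$ in the metric $g$ and are thus $L^2$-bounded. The decisive term becomes $\Re\bigl((\tilde C^*(\partial_{y_d}\tilde C) - h^{-1}\tilde A^*\tilde C^*\tilde C)\tilde w \,\big|\, \tilde w\bigr)$. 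The principal symbols of $\tilde C^*(\partial_{y_d}\tilde C)$ and $-h^{-1}\tilde A^*\tilde C^*\tilde C$, after using the chain rule $\partial_{y_d}\zeta = -\tfrac{2}{3}\zeta \tilde r_d^{-1}\partial_{y_d}\tilde r_d$ on the argument of $\beta$ and the Airy identity $\alpha^2 = \zeta + \alpha'$, combine into the quantity
\[
-2\tilde\chi\tilde\chi_2^2 \tilde r_d^{1/3}\beta(\zeta)\bigl[\beta'(\zeta) + \beta(\zeta)\Re\alpha(\zeta)\bigr] h^{-1}|\hat w|^2
\]
at leading order, which is $\geq 0$ by Lemma~\ref{lem: properties beta}(i). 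Applying G\aa rding's inequality, I get
\[
\tfrac12 \partial_{y_d}|\tilde C\tilde w|^2 \;\geq\; -C\bigl(|\rr|^2 + |\vv|^2 + |\ff|^2 + |\kk|^2\bigr) - C\bigl(\|\tilde w\|^2 + \|R_h\|^2\bigr).
\]
Integrating in $y_d$ from $0$ to $\sigma > 0$, then averaging $\sigma$ over a fixed interval $(0,\sigma_0)$ so that $|\tilde C\tilde w|^2(\sigma)$ is absorbed into an $L^2(y_d>0)$ norm, and unwinding the unitary $F$, produces the stated trace estimate.

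The main obstacle is the symbolic identification in paragraph~3: the contributions $\tilde C^*\partial_{y_d}\tilde C$ and $-h^{-1}\tilde A^*\tilde C^*\tilde C$ a priori sit at different orders ($h^{-1/3}$ versus $h^{-1}$) in the metric $g$, so the reduction to the single combination $\beta[-\beta'-\beta\Re\alpha]$ requires careful bookkeeping of the $y_d$-dependence of $\zeta$ through $\tilde r_d$ and the precise use of the Airy ODE, together with controlling the error terms arising from commutators with $\op(\theta)$ and the bounded operators $B$, $\op(b)$ coming from Lemma~\ref{lem: FIO}. Once the sign $\geq 0$ of the effective symbol is established, the sharp G\aa rding step and the integration in $y_d$ are standard.
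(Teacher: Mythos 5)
Your overall architecture (an energy identity in $x_d$ for $|C(i\vv+A\rr)|^2$, positivity of an effective symbol via Lemma~\ref{lem: properties beta}, sharp G\aa rding, then integration and averaging in $\sigma$) is the same as the paper's, but two of your symbolic reductions are wrong in a way that breaks the proof. First, the term you fold into the ``$L^2$-bounded'' remainder $R_h$, namely $h^{-1}\big(\op(\chi^2(R-1))-A^2\big)\rr$, is not bounded: its conjugated symbol is $-h^{-1/3}\tilde\chi^2\tilde r_d^{2/3}\alpha'(\zeta)\in S(h^{-1/3}\est{h^{1/3}\eta_1}^{-1/2},g)$, i.e.\ of the same order as $\tilde\Psi$, of size $h^{-1/3}$ near $\eta_1=0$. (Your own observation that the \emph{undivided} remainder is $O(h^{2/3})$ is correct, but in the transport equation it is multiplied by $h^{-1}$.) The paper removes this term (its $K_5$) only by combining it with the commutator $ih^{-1}[\tilde A,\op(\theta)]$ coming from $\pd_{x_d}A$, and using the Airy equation $\alpha'=\alpha^2-\zeta$ \emph{together with} Lemma~\ref{lem: FIO}~v) ($\kappa^*\{\eta_1,\theta\}=\pd_{x_d}R$), which makes the $h^{-1/3}$-order symbol vanish identically; what survives is a bounded operator applied to $F\rr$ paired against $\tilde C^*\tilde C\ww$, hence a term of size $|r_h|\,|\tilde\Psi\ww|$ that must then be absorbed using Proposition~\ref{prop: first estimate glancing}. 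Without this double cancellation your bound on $\|R_h\|$, and hence the final inequality, fails by a factor $h^{-1/3}$.

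Second, you attribute the $\beta'$ in the positive symbol to the wrong source, and the $[\tilde C,\op(\theta)]$ commutator you discard as ``$L^2$-bounded'' is exactly where it comes from. The contribution of $\tilde C^*(\pd_{y_d}\tilde C)$ through $\pd_{y_d}\zeta$ has symbol of class $S(h^{-1/3}\est{h^{1/3}\eta_1}^{-1/2},g)$ and is only an error of size $|\tilde\Psi\ww|\,|\ww|$ (the paper's $K_1$); it cannot combine with the $h^{-1}$-order term $-h^{-1}\tilde c^2\tilde a$, as you yourself note in your last paragraph. The term $ih^{-1}\tilde C^*[\tilde C,\op(\theta)]$, on the other hand, contributes $h^{-1}\tilde c\{\tilde c,\theta\}$, whose leading part is proportional to $h^{-1}\tilde\chi_2^2\tilde r_d^{1/3}\beta(\zeta)\beta'(\zeta)$ --- genuinely of order $h^{-1}$ near $\zeta=0$ and not absorbable into the $S(h^{-2/3}\est{h^{1/3}\eta_1}^{-1},g)$ error class --- and it is precisely this term that must be added to $-h^{-1}\tilde c^2\tilde a$ to produce $h^{-1}\tilde\chi_2^2\tilde r_d^{1/3}\beta\,(-\beta\,\tilde\chi\,\Re\alpha-\beta')\ge 0$. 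This is the whole reason Lemma~\ref{lem: properties beta} asserts $-\beta'-\beta\Re\alpha\ge0$ rather than merely exploiting $\Re\alpha<0$. Finally, sharp G\aa rding applied to an $S(h^{-1},g)$ symbol leaves an error of order $h^{-2/3}\est{h^{1/3}\eta_1}^{-1}$, i.e.\ $|\tilde\Psi\ww|^2$, not $\|\tilde w\|^2$; closing the argument requires the bound on $\|\Psi(i\vv+A\rr)\|^2_{L^2(x_d>0)}$ from Proposition~\ref{prop: first estimate glancing}, which your write-up never actually invokes.
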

\begin{proof}
\begin{align}
	\label{K for alpha airy}
\frac12\pd _{x_d}|C(i\vv +A \rr)|^2 &=\Re(\pd _{x_d}\big(C(i\vv +A \rr)\big) |C( i\vv +A \rr) )  \\
& =\Re( (\pd _{x_d} C) (i\vv +A \rr) +  iC\pd _{x_d}\vv +C\pd _{x_d}(A) \rr    +C A \pd _{x_d}\rr  | C( i\vv +A \rr)  )\notag\\
&=J_1+J_2+J_3+J_4+ J_5=K_1+K_2+K_3+K_4+K_5,\notag
\end{align}
where 
\begin{align*}
J_1 &=  \Re(  C^* (\pd _{x_d} C) (i\vv +A \rr)     | i\vv +A \rr  )\\
J_2 &=  \Re(  \pd _{x_d}(A) \rr       |C^*C( i\vv +A \rr) ) \\
J_3 &=   \Re(	-\kk+iA\ff	 |C^*C( i\vv +A \rr) ) \\
J_4 &=  \Re(	-ih^{-1} A\vv	 |C^*C( i\vv +A \rr) )  \\
J_5 &=   \Re(-h^{-1}\op(\cc^2(\rho)(R(\rho)-1)) \rr  |C^*C( i\vv +A \rr) ) .
\end{align*}
Taking as in the proof of Proposition~\ref{lem: alpha airy},
$\ww=F(    i\vv +A \rr )$ and from Lemma~\ref{lem: FIO},
we write 
\begin{align*}
K_1  &=  \Re(   \tilde C^* (\pd _{y_d} \tilde C) \ww     |  \ww )  \\
K_2 &=  \Re(  (\pd _{y_d}\tilde A  +h[\tilde A,B]  )F \rr       |\tilde C^* \tilde C      \ww )
 +\Re ( F\op (b(\rho)) \rr | \tilde C^*\tilde C   \ww  ) \\
K_3  &=  \Re( -F\kk+iFA\ff	 |\tilde C^*\tilde C   \ww )  \\
K_4  &=  \Re(-h^{-1}   \tilde A \ww | \tilde C^*\tilde C    \ww )  + \Re(   \tilde C^*  ( ih^{-1} [\tilde C,\op(\theta(\trho)) ]
+h\tilde C^* [\tilde C, B])F \ww     |  \ww ) \\
K_5  &=  \Re( h^{-1} (   \tilde A  ^2 - \op(h\eta_1\tilde \chi^2(\tilde \rho) ))F \rr |  \tilde C^*\tilde C  \ww )  + 
 \Re(  (ih^{-1}[\tilde A, \op(\theta(\trho))] F \rr       |\tilde C^* \tilde C      \ww ).
\end{align*}
Observe that  the symbol of 
\( \tilde C^*( \pd_{y_d} \tilde C) \) is in \(  S(h^{-1/3}\est{h^{1/3}\eta_1}^{-1/2},g) ,\) then 
\begin{equation}
	\label{est: K 1}
|K_1|\lesssim | \tilde \Psi \ww||\ww|   .
\end{equation}
From symbolic calculus the symbol of  $ ( \pd_{y_d} \tilde A  )^*  \tilde C^* \tilde C $ is in $S(1,g)$, thus this operator is 
bounded on $L^2$. Clearly the terms $[\tilde A,B]$ coming from remainder term of $\pd_{x_d}A$ (see  \textbf{iv)} 
Lemma~\ref{lem: FIO}) and  $\op (b(\rho))$ coming from remainder term of  
$F\op(\chi^2(\rho)(R(\rho)-1)) F^{-1} $ (see  \textbf{ii)} Lemma~\ref{lem: FIO}) are bounded on $L^2$. 
As $ \tilde C^* \tilde C$ has a symbol in $S(h^{-1/3}\est{h^{1/3}\eta_1}^{-1/2},g)$,
we obtain
\begin{equation}
	\label{est: K 2}
|K_2|\lesssim   |\rr| |\ww|+  |\rr|  |\tilde \Psi \ww|.
\end{equation}
From  same arguments we have
\begin{equation}
	\label{est: K 3}
|K_3|\lesssim (|\kk|+|\ff|) | \tilde\Psi \ww|   . 
\end{equation}
To estimate the last term of $K_4$ we write
\(
  \tilde C^* [ \tilde C ,B]=   \tilde C^* \tilde C B-  \tilde C^*  B\tilde C, 
\)
the first term gives a term estimated by 
\(
 | \tilde \Psi \ww |   |\ww|  
\)
and the second is estimated by
\(
| \tilde C\ww |^2=(  \tilde C^* \tilde C \ww |  \ww  )\lesssim | \tilde \Psi \ww||\ww|  .
\)

To estimate the other terms of $K_4$ observe that 
 the symbol of 
 \(
  - h^{-1} \tilde C^* \tilde C\tilde A +i h^{-1} \tilde C^* [ \tilde C , \op(\theta(\trho)) ])
\)
is  
 \(
  - h^{-1} \tilde  c^2 \tilde a + h^{-1} \tilde c \{ \tilde c , \theta(\tilde \rho) \} 
\)
modulo a symbol in 
\(
S(h^{-2/3}\est{h^{1/3}\eta_1}^{-1},g)
\)
and    
this  term can be estimate by 
\(
|  \tilde \Psi w_h|^2  .
\)
We compute
\begin{align*}
h^{1/6}\{ \tilde c,\theta(\tilde \rho)\}
&= \{ \tilde \chi_2 (\tilde \rho) ,\theta(\tilde \rho)\}\beta(\zeta)+\{\beta(\zeta) ,\theta(\tilde \rho)\} \chi_2(\tilde \rho)   \\
&= \{ \tilde \chi_2 (\tilde \rho) ,\theta(\tilde \rho)\}\beta(\zeta)+\{\zeta ,\theta(\tilde \rho)\} \chi_2(\tilde \rho)   \beta'(\zeta ).
\end{align*}
The term 
\(
 h^{-1} \tilde c  h^{-1/6}  \{ \tilde \chi_2 (\tilde \rho) ,\theta(\tilde \rho)\}\beta(\zeta) \in S(   h^{-1/3}\est{h^{1/3}\eta_1}^{-1/2},g),
\)
and the term of $K_4$ coming from this term can be estimated by $|  \tilde \Psi \ww ||  \ww|$.
For the other term we have
\begin{align*}
 h^{-1/3}\{\zeta ,\theta(\tilde \rho)\} &=   \{  \tilde r_d^{-2/3} (\tilde \rho)  ,\theta(\tilde \rho)\}  \eta_1  
 +  \{  \eta_1   ,\theta(\tilde \rho)\}  \tilde r_d^{-2/3}(\tilde \rho).
\end{align*}
The term 
\(
 h^{-1} \tilde c   h^{-1/6}  \chi_2(\tilde \rho)   \beta'(\zeta ) h^{1/3}  \{  \tilde r_d^{-2/3} (\tilde \rho)  ,\theta(\tilde \rho)\}  \eta_1  
  \in S(   h^{-1/3}\est{h^{1/3}\eta_1}^{-1/2},g)
\)
and the term of $K_4$ coming from this term can be estimate by $|  \tilde \Psi \ww||  \ww|$.

Thus, modulo remainder terms, 
 the symbol of 
\(
  - h^{-1} \tilde C^* \tilde C\tilde A +i h^{-1} \tilde C^* [ \tilde C , \op(\theta) ])
\)
is  given by 
 \begin{align*}
L=&  - h^{-1} \tilde  c^2 \tilde a 
  + h^{-1} \tilde c h^{-1/6} \tilde  \chi_2(\tilde \rho)   \beta'(\zeta )   h^{1/3} \{  \eta_1   ,\theta(\tilde \rho)\}  \tilde r_d^{-2/3}(\tilde \rho) \\
  &\qquad= -h^{-1}  h^{-1/3}\tilde \chi_2^2(\tilde \rho)\beta^2(\zeta)
   h^{1/3}\tilde \chi (\tilde \rho) \tilde r_d^{1/3} (\tilde \rho)  \alpha(\zeta)  \\
 &\qquad\quad   - h^{-1}     h^{-1/6}\tilde \chi_2(\tilde \rho)\beta(\zeta)            
  h^{-1/6}  \tilde \chi_2(\tilde \rho)   \beta'(\zeta )   h^{1/3}       \tilde r_d
    \tilde r_d^{-2/3}(\tilde \rho)
\end{align*}
from \textbf{v)} of Lemma~\ref{lem: FIO}.
We thus obtain
 \begin{align*}
 L=  h^{-1}   \tilde \chi_2^2(\tilde \rho)  \beta(\zeta)    \tilde r_d^{1/3} (\tilde \rho) \big(   - \beta(\zeta) \tilde \chi (\tilde \rho) \alpha(\zeta) 
 - \beta'(\zeta )   \big)\in S( h^{-1},g).
 \end{align*}
As $ \tilde \chi $ is equal $1$ on the support of $ \tilde \chi_2^2$, $\beta\ge 0$ and 
\(
 - \beta(\zeta) \Re \alpha(\zeta) 
 - \beta'(\zeta )  \ge0
 \)
we have $\Re L\ge0$.  We can apply sharp G\aa rding inequality (see \cite[Theorem 18.6.7]{HormanderV3-2007}), we yield, taking account remainder terms
\begin{equation}
	\label{est: K 4}
K_4\ge -C\big( |\tilde \Psi \ww|^2 + | \tilde \Psi \ww||\ww|\big)  .
\end{equation}
Now we shall estimate $K_5$.
The symbol of 
\(
   h^{-1}\tilde A^2  + i h^{-1}[ \tilde A , \op(\theta(\tilde \rho) ) ] -  h^{-1}\op (h\eta_1 \tilde \chi^2(\tilde \rho)) 
\)
is 
\(
   h^{-1}\tilde a^2  +  h^{-1}\{ \tilde a ,\theta(\tilde \rho)  \} -  \eta_1 \tilde \chi^2(\tilde \rho) 
\)
modulo a symbol in $S(1,g)$.
We have
\begin{align*}
\{ \tilde a ,\theta (\tilde\rho)\}&= h^{1/3} \alpha(\zeta)  \{(\tilde \chi  \tilde r_d^{1/3} )(\tilde \rho) ,\theta(\tilde \rho)  \}
+h^{2/3}(\tilde \chi  \tilde r_d^{1/3} )(\tilde \rho)  \alpha'(\zeta)   \eta_1 \{    \tilde r_d^{-2/3}(\tilde \rho)  ,\theta(\tilde \rho)  \}  \\
&\quad+h^{2/3}(\tilde \chi  \tilde r_d^{-1/3} )(\tilde \rho) \alpha'(\zeta) \{   \eta_1  ,\theta(\tilde \rho)  \}.
\end{align*}
The first two terms give  terms  estimated by 
\(
h^{1/3}\est{h^{1/3}\eta_1}^{1/2} \lesssim 1
\)
as  on the support of $\tilde \chi$ we have $|\eta_1|\lesssim h^{-1}$. 
Then both terms give  associated operators bounded on $L^2$.  Modulo a bounded operator on $L^2$ we have to consider
the symbol, taking account  \textbf{v)} of Lemma~\ref{lem: FIO}
\begin{align*}
&  h^{-1/3}   \tilde \chi ^2(\tilde \rho)  \tilde r_d^{2/3}(\tilde \rho)  \alpha^2(\zeta) 
-   h^{-1/3} (\tilde \chi  \tilde r_d^{2/3} )(\tilde \rho) \alpha'(\zeta)  
 -    \eta_1 \tilde \chi^2(\tilde \rho)  \\
 &=   h^{-1/3}   \tilde \chi ^2(\tilde \rho)  \tilde r_d^{2/3}(\tilde \rho)\big(
\alpha^2(\zeta) -\alpha'(\zeta) - h^{1/3}   \tilde r_d^{-2/3}(\tilde \rho) \eta_1
 \big)  \\
&\quad  -   h^{-1/3} (\tilde \chi  \tilde r_d^{2/3} ) (\tilde \rho)    (1-\tilde \chi(\tilde \rho) )   \alpha'(\zeta) .
 \end{align*}
The first term is null from differential equation satisfying by $\alpha$ and the value of $\zeta$.
We claim that 
\begin{equation}
	\label{claim: estimate on K5}
|\op \big(   h^{-1/3} (\tilde \chi  \tilde r_d^{2/3} ) (\tilde \rho)    (1-\tilde \chi(\tilde \rho) )   \alpha'(\zeta)   \big)  F\rr |	
\lesssim |r_h|.
\end{equation}
The proof of the claim is given below.
With this claim  and what we do above, the operator
\(
 h^{-1}\tilde A^2  + i h^{-1}[ \tilde A , \op(\theta(\tilde \rho)) ] -  h^{-1}\op (h\eta_1 \tilde \chi^2(\tilde \rho)))
\)
gives a term bounded by 
\(
|r_h|.
\)
As the symbol of 
\(
 \tilde C^* \tilde C
\)
is in 
\(
S(h^{-1/3} \est{h^{1/3}\eta_1}^{-1/2} ,g),
\)
we obtain that 
\begin{equation}
	\label{est: K 5}
|K_5|\lesssim  |r_h|  |\tilde \Psi \ww|   . 
\end{equation}
From  \eqref{K for alpha airy}---\eqref{est: K 4} and  \eqref{est: K 5} we obtain
\begin{equation*}
\frac12\pd_{x_d}|C(i\vv +A \rr  )|^2\gtrsim -\big( |\kk|^2+|\ff|^2+|\tilde \Psi \ww|^2 +  |u_h^d|^2    +| r_h|^2  \big).
\end{equation*}
Integrating this inequality between 0 and \( \sigma>0\),
we have, estimating  the term coming from \( |\tilde \Psi \ww | \)  by Proposition~\ref{prop: first estimate glancing}, 
\begin{equation}
	\label{est: fin second lemma a la Tataru}
|C( i\vv +A \rr )  |^2(0)
\lesssim    \| \kk  \|^2 +   \|  \ff \|^2 +  \|u_h^d\|^2    +\| r_h\|^2 +  |C( i\vv +A \rr  )  |^2(\sigma).
\end{equation}
As 
\[
  |C(  i\vv +A \rr  )  |^2(\sigma)= (\tilde C ^*\tilde C  \ww,\ww),
\]
we have      
\[
  |C(  i\vv +A \rr)  |^2(\sigma)\lesssim |\tilde \Psi \ww |^2(\sigma)+ |u_h^d|^2 (\sigma)+ | r_h|^2(\sigma).
\]
Integrating estimate~\eqref{est: fin second lemma a la Tataru}  between  two positive values of \(\sigma\) and estimating 
as above the term \(  |\tilde \Psi \ww |^2(\sigma)\), we obtain the conclusion of Proposition~\ref{lem: alpha airy}.
\end{proof}
\begin{proof}[Proof of Claim~\eqref{claim: estimate on K5}]

Let $\chi_1$ smooth cutoff such that $\chi_1$ is supported on $\chi=1$ and $\chi_0$ supported on $\chi_1=1$. 
We have $| \rr-\op(\chi_1(\rho))\op(\chi_0(\rho))r_h|\lesssim h^N|r_h|$ for all $N>0$.

As   $\kappa^* \tilde  \chi_1 =  \chi_1$, from Lemma~\ref{lem: FIO} we thus have 
\[
F^{-1}\op( \tilde  \chi_1(\tilde \rho))F= \op(  \chi_1(x,h\xi')) +h K,
\]
where $K$ is bounded on $L^2$. We then have
\(
F \rr =\op( \tilde  \chi_1(\tilde \rho))F \rr+ hK' r_h,
\)
where $K'$ is bounded on $L^2$. Then
\begin{align*}
&\op \big(   h^{-1/3} (\tilde \chi  \tilde r_d^{2/3} ) (\tilde \rho)    (1-\tilde \chi(\tilde \rho) )   \alpha'(\zeta)   \big)  F\rr \\
&\quad= \op \big(   h^{-1/3} (\tilde \chi  \tilde r_d^{2/3} ) (\tilde \rho)    (1-\tilde \chi(\tilde \rho) )   \alpha'(\zeta)   \big)
\big( \op( \tilde  \chi_1(\tilde \rho))F \rr+ hK' r_h  \big).
\end{align*}
The first term coming from \(  \op( \tilde  \chi_1(\tilde \rho)) \) gives an operator with null symbol.
As 
\[
  h^{-1/3} (\tilde \chi  \tilde r_d^{2/3} ) (\tilde \rho)    (1-\tilde \chi(\tilde \rho) )   \alpha'(\zeta)\in
  S(h^{-1/3},g),
\]
 then the second term is also bounded by \( |r_h| \). This proves the claim.
\end{proof}
In the following proposition we keep notation introduced above, precise statements on $\tilde \chi_2$ are given 
above Proposition~\ref{lem: alpha airy}.

%
%
\begin{prop}
	\label{prop: boundary term diffractive Neumann}
Let $(x_0',0)\in \pd\Omega$.
Let $\chi_4\in \Con_0^\infty(\R)$ be supported on a \nhd of 0 and $\chi_4=1 $ in a  \nhd of 0.
Let $(x_0',0,\xi_0')$ and  $U_0$ be as in the statement of Lemma~\ref{lem: Symplectic transformation}.
Let $\ell\in \Con_0^\infty(\R^{d}\times \R^{d-1} ) $ supported on $\{ \chi_2=1\}$  for every $x_d$, where 
$\kappa^*\tilde \chi_2= \chi_2$. We moreover assume  $\pd_{x_d} R(x,\xi')<0$ on support of $\ell$. 
We have 
\[
\lim_{\eps\to 0} \lim_{h\to 0}  \Big| \Big( 
\op\big(  \ell(x',0,h\xi') \chi_4\big(( R(x',0,h\xi')-1)/\eps\big)  ( R(x',0,h\xi')-1 )\big)  \rr_{|x_d=0}|  \rr_{|x_d=0}  \Big)_\pd  \Big|=0.
\]
\end{prop}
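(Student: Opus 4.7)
\emph{Setup and use of Proposition~\ref{lem: alpha airy}.} Choose a cutoff $\chi_0$ with $\chi_0 = 1$ on the supports of $\ell$ and $\chi_2$, and form the localized pair $\rr = \ops(\chi_0) r_h$, $\vv = \ops(\chi_0) u_h^d$, which satisfies \eqref{Small System}. Since $\ops(\chi_0)$ is tangential and $(u_h^d)_{|\pd\Omega} = 0$, we have $\vv_{|x_d=0} = 0$. Applying Proposition~\ref{lem: alpha airy} together with $\|r_h\|+\|u_h^d\|\le 1$ and $\|f_h\|+\|q_h\|\to 0$ yields
\[
|(CA\rr)_{|x_d=0}| = |C(i\vv + A\rr)_{|x_d=0}| \le C_0,
\]
since $C$ and $A$ are tangential and commute with the boundary restriction. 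Setting $z = F|_{x_d=0}\,\rr_{|x_d=0}$ (unitary on $L^2(\R^{d-1})$), Lemma~\ref{lem: FIO} conjugates $\ops(\ell \chi_4((R-1)/\eps)(R-1))$ into $\op(\tilde\phi) + h\op(b)$ with principal symbol $\tilde\phi = \tilde\ell\,\tilde\chi_4(h\eta_1/\eps)\cdot h\eta_1$ (using $\kappa^*\eta_1 = R-1$), and one has $|\tilde C\tilde A z|\le C_0$.

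\emph{Symbol comparison and factorization.} With $\zeta = h^{1/3}\eta_1\tilde r_d^{-2/3}$, Lemmas~\ref{lem: alpha properties} and \ref{lem: properties beta} give $\beta(\zeta)^2|\alpha(\zeta)|^2 \gtrsim \est{\zeta}^{1/2}$, whence
\[
|\tilde c\tilde a|^2 \gtrsim h^{1/3}\tilde r_d^{2/3}\est{\zeta}^{1/2}
\]
on $\supp \tilde\chi\tilde\chi_2$. Since $|\tilde\phi|\lesssim h^{2/3}\tilde r_d^{2/3}|\zeta|$ and on $\supp \tilde\chi_4(h\eta_1/\eps)$ we have $|\zeta|\le C\eps h^{-2/3}$, the pointwise ratio satisfies
\[
\frac{|\tilde\phi|}{|\tilde c\tilde a|^2} \le C h^{1/3}\est{\zeta}^{1/2} \le C(\sqrt\eps + h^{1/3}).
\]
Define $\tilde\sigma = \tilde\phi/|\tilde c\tilde a|^2$, which is smooth since $\beta$, $|\alpha|$, $\tilde r_d$ are positive on the relevant support. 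By \eqref{est: norm L2} applied in the $h^{1/3}$-calculus with metric $g = |dy|^2 + h^{2/3}\est{h^{1/3}\eta_1}^{-2}|d\eta'|^2$, $\|\op(\tilde\sigma)\|_{L^2\to L^2} \le C(\sqrt\eps + h^{1/3})$. Symbolic calculus then produces
\[
\op(\tilde\phi) = (\tilde C\tilde A)^*\op(\tilde\sigma)(\tilde C\tilde A) + R_h,
\]
so that
\[
|(\op(\tilde\phi) z, z)| \le \|\op(\tilde\sigma)\|\cdot |\tilde C\tilde A z|^2 + |(R_h z, z)| \lesssim \sqrt\eps + h^{1/3} + |(R_h z, z)|.
\]

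\emph{Main obstacle and conclusion.} The delicate step is bounding $(R_h z, z)$ by a quantity tending to zero as $h\to 0$. The $g$-calculus places the symbol of $R_h$ in $S(h^{2/3}(\sqrt\eps + h^{1/3})\est{h^{1/3}\eta_1}^{-1/2}, g)$, so $\|R_h\|_{L^2\to L^2} \lesssim h^{2/3}(\sqrt\eps + h^{1/3})$; this combined with the crude bound $|z|^2 = o(h^{-1})$ from Lemma~\ref{lem: first est trace} would give $(R_h z, z) = o(h^{-1/3})$, which is not adequate. The remedy uses that $R_h$ is microlocalized where $|h\eta_1|\le 2\eps$, i.e., in the glancing region, on which the sharper estimate \eqref{est: glancing final} established in the proof of Lemma~\ref{lem: first est trace} gives
\[
\limsup_{h\to 0} h\,|\op(\tilde\chi_4(h\eta_1/\eps)) z|^2 \lesssim \sqrt\eps;
\]
inserting this cutoff on both sides of $(R_h z, z)$ modulo $O(h^\infty)$ and combining with the interior $\Psi$-weighted control from Proposition~\ref{prop: first estimate glancing} yields $|(R_h z, z)| = o(1)$ as $h\to 0$. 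Taking $h\to 0$ first then produces $\limsup_{h\to 0}|(\op(\phi)\rr_{|\pd}, \rr_{|\pd})_\pd| \le C\sqrt\eps$, and the outer limit $\eps\to 0$ concludes the proof.
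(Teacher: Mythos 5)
Your strategy is the same as the paper's: conjugate by the FIO $F$, reduce to the quadratic form of $\op\big(h\eta_1\tilde\ell(\trho)\tilde\chi_4(h\eta_1/\eps)\big)$ on the trace $z=(F\rr)_{|x_d=0}$, and factor this operator through $\tilde C\tilde A$, whose action on $z$ is bounded by Proposition~\ref{lem: alpha airy} (the paper writes the factorization with the principal symbol $\tilde\gamma$ of $\tilde C\tilde A$, you write it with the operators themselves; this is the same idea). Your quotient symbol $\tilde\sigma=\tilde\phi/|\tilde c\tilde a|^2$ is the paper's symbol $L$, and your bound $\sqrt\eps+h^{1/3}$ on its operator norm is the paper's $C\sqrt\eps+C_\eps h^{1/3}$ obtained by G{\aa}rding; note only that the constant in front of $h^{1/3}$ must be allowed to depend on $\eps$ (derivatives of $\tilde\chi_4(h\eta_1/\eps)$ produce $1/\eps$), which is harmless given the order of the two limits. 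Up to this point the argument is correct and matches the paper.

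The genuine gap is the remainder $R_h$. With your own symbol class one only gets $\|R_h\|_{L^2\to L^2}\lesssim_\eps h^{2/3}$, and the glancing estimate \eqref{est: glancing final} only yields $|\op(\tilde\chi_4(h\eta_1/\eps))z|^2\lesssim \sqrt\eps\, h^{-1}(1+o(1))$; the product is of size $h^{2/3}\cdot h^{-1}=h^{-1/3}$ (times powers of $\eps$), which diverges as $h\to0$ for each fixed $\eps$. So the asserted conclusion $(R_hz,z)=o(1)$ does not follow from the estimates you state, and the appeal to the ``interior $\Psi$-weighted control'' of Proposition~\ref{prop: first estimate glancing} is not made precise enough to repair this: no operator-norm bound of order $h^{2/3}$ paired with an unweighted trace bound of order $h^{-1}$ can ever close. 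What is needed is to reuse the \emph{weighted} trace information on the remainder itself. Each term of the expansion of $(\tilde C\tilde A)^*\op(\tilde\sigma)(\tilde C\tilde A)-\op(\tilde\phi)$ is smaller than $|\tilde c\tilde a|^2\tilde\sigma$ by the calculus gain $h^{1/3}\est{h^{1/3}\eta_1}^{-1}$ and remains supported in $\supp\tilde\chi_2$, so it can in turn be refactored through $\tilde C\tilde A$ (or through $\tilde A$, using that $|\tilde A z|$ is bounded by Proposition~\ref{prop: first estimate glancing} since $\vv_{|x_d=0}=0$), giving a contribution $O_\eps(h^{1/3})\,|\tilde C\tilde A z|^2=O_\eps(h^{1/3})$. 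This is how the paper's one-step factorization $\tilde\gamma^*\op(L)\tilde\gamma$ plus a remainder of operator norm $O(h)$ killed by Lemma~\ref{lem: first est trace} ($|z|^2=o(h^{-1})$) should be read; without such a step your proof of the inner limit $h\to0$ is incomplete.
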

\begin{proof}
Let $\rrr=(F\rr)_{|x_d=0}$. From  Proposition~\ref{lem: alpha airy}  and as $\vv_{|x_d=0}=0$ we obtain
\(
|\tilde C \tilde A\rrr | 
\)
is bounded.  By symbolic calculus and the support properties of $\tilde \chi$ and $\tilde \chi_2$
\[
\tilde C \tilde A= h^{1/6}\op\big(\tilde \chi_2(\tilde \rho) \beta (\zeta ) 
 \tilde r_d^{1/3}(\tilde \rho)  \alpha(\zeta ) \big),
\]
modulo an operator with symbol in $S(h^{1/2}\est{ h^{1/3}\eta_1}^{-3/4}  ,g)$. From  Lemma~\ref{lem: first est trace}
this remainder  term goes to 0 as $h$.

Let $\tilde \ell$  be such that $\kappa^*\tilde \ell=\ell$ and 
$\kappa^*   \chi_4(\eta_1/\eps)= \chi_4( (R(x',0,\xi')-1)/\eps)$.  In what follow, to be 
coherent with previous notation, we define $\tilde \chi_4=\chi_4$ and we use the notation $\tilde \chi_4$ when the function 
is defined in $(y,\eta)$ variables. 

From   Lemma~\ref{lem: FIO} we have 
\begin{multline*}
F^{-1}\op(h\eta_1 \tilde \ell (\tilde \rho)  \tilde \chi_4  (h\eta_1/\eps  ))F\\
= \op \big(  \ell(x',0,h\xi') \chi_4\big(( R(x',0,h\xi')-1)/\eps\big) 
 ( R(x',0,h\xi')-1 )\big) +h\op(r_0(x,h\xi')),
\end{multline*}
where  $r_0$ is of order 0. The term coming from $r_0$ goes to 0 as $h$, from   Lemma~\ref{lem: first est trace}.
Then it is sufficient to prove that 
\(
\ds \lim_{\eps\to 0} \ds \lim_{h\to 0} \Big( \op(h\eta_1 \tilde \ell (\tilde \rho)  \tilde \chi_4  (h\eta_1/\eps  ))  \rrr| \rrr \Big)_\pd=0.
\)
Considering the symbol 
\(
h\eta_1 \tilde \ell (\tilde \rho)  \tilde \chi_4  (h\eta_1/\eps  )\in S(h^{2/3}\est{h^{1/3}\eta_1},g)
\)
and from support properties of $  \tilde \ell  $ and $\tilde \chi_2$, we have
\begin{align*}
 \op(h\eta_1 \tilde \ell (\tilde \rho)  \tilde \chi_4  (h\eta_1/\eps  )) 
=\tilde\gamma^*
\op\big(h^{2/3}\eta_1 \tilde \ell (\tilde \rho)  \tilde \chi_4  (h\eta_1/\eps  )      
\beta^{-2} (\zeta )  \tilde r_d^{-2/3}(\tilde \rho) | \alpha(\zeta ) |^{-2} \big)
 \tilde\gamma,
\end{align*}
where 
\(
 \tilde\gamma = \op\big( h^{1/6}    \tilde \chi_2 (\tilde \rho)   \beta (\zeta )  \tilde r_d^{1/3}(\tilde \rho)  \alpha(\zeta )\big),
\)
modulo an operator with symbol in $S(h,g)$, then this last term involves a term going to 0 as $h$ to 0.
Then we obtain the following estimation, modulo a term going to 0 as $h$ to 0, 
\begin{multline}
	\label{est: Neumann diffractif}
| \Big( \op(h\eta_1 \tilde \ell (\tilde \rho)  \tilde \chi_4  (h\eta_1/\eps  ))  \rrr| \rrr \Big)_\pd| 
\\
\lesssim  | \op\big(h^{2/3}\eta_1 \tilde \ell (\tilde \rho)  \tilde \chi_4  (h\eta_1/\eps  )      
\beta^{-2} (\zeta )  \tilde r_d^{-2/3}(\tilde \rho) | \alpha(\zeta ) |^{-2} \big)
\rrrr\big)|  | \rrrr|,
\end{multline}
where 
\(
\rrrr= \op\big( h^{1/6}    \tilde \chi_2 (\tilde \rho)   \beta (\zeta )  \tilde r_d^{1/3}(\tilde \rho)  \alpha(\zeta )\big)\rrr.
\)
Observe from Proposition~\ref{lem: alpha airy},  
$|\rrrr|$ is bounded, as $ h^{1/6}    \tilde \chi_2 (\tilde \rho)   \beta (\zeta )  \tilde r_d^{1/3}(\tilde \rho)  \alpha(\zeta )$ is the principal symbol of $\tilde C\tilde A$.

We claim that 
\(
h^{2/3}\eta_1 \tilde \chi_4  (h\eta_1/\eps  )\in S(\est{h^{1/3}\eta_1}^{1/2}, g).
\)
Indeed 
\(
h^{2/3}|\eta_1|\lesssim \est{ h\eta_1 }^{1/2} \est{h^{1/3}\eta_1}^{1/2},    
\)
and this gives the sought estimate.
For derivatives and  for $k\ge1$, we have
\[
\pd_{\eta_1}^k\big(  h^{2/3}\eta_1 \tilde \chi_4  (h\eta_1/\eps  ) \big) =kh^{2/3}  (h/\eps)^{k-1}    \tilde \chi_4 ^{(k-1)} (h\eta_1/\eps  )
+ h^{2/3} \eta_1 (h/\eps)^{k}    \tilde \chi_4 ^{(k)} (h\eta_1/\eps  ).
\]
As $h\eta_1$ is bounded on the support of $  \tilde \chi_4 ^{(k)} (h\eta_1/\eps  )$ both terms are bounded by $h^{k-1/3}$.
From estimate
\(
h^{2/3}\est{h^{1/3}\eta_1}\lesssim  \est{h\eta_1}
\)
we have 
\[
h^{k-1/3}\lesssim h^{k/3} \big(   \est{h\eta_1}  /  \est{h^{1/3}\eta_1}   \big)^{k-1/2},
\]
which proves the claim. Observe that the constants in above estimation are not uniform with respect $\eps$.

With the previous claim and as 
\(
\beta^{-2} (\zeta )  | \alpha(\zeta ) |^{-2}\lesssim \est{h^{1/3}\eta_1}^{-1/2}, 
\)
we have 
\[
L(y',\eta')=h^{2/3}\eta_1 \tilde \ell (\tilde \rho)  \tilde \chi_4  (h\eta_1/\eps  )      
\beta^{-2} (\zeta )  \tilde r_d^{-2/3}(\tilde \rho) | \alpha(\zeta ) |^{-2} \in S(1,g).
\]
As 
\(
h^{2/3}|\eta_1|\lesssim | h\eta_1 |^{1/2} \est{h^{1/3}\eta_1}^{1/2}\lesssim \sqrt{\eps}\est{h^{1/3}\eta_1}^{1/2}, 
\)
on the support of $  \tilde \chi_4  (h\eta_1/\eps  )   $, we deduce from G\aa rding inequality 
(see \cite[Theorem 18.6.7]{HormanderV3-2007})
that the operator norm from $L^2$ to $L^2$ of
$\op(L)$ is bounded by $ C \sqrt{\eps} + C_\eps h^{1/3}$ where $C$ is independent of $\eps$ and $ C_\eps $ may depend on 
$\eps$. From that and \eqref{est: Neumann diffractif} we deduce the result.
\end{proof}

\end{document}